\newtheorem{thm}{Theorem}[section]
\newtheorem*{thm*}{Theorem}
\newtheorem{cor}[thm]{Corollary}
\newtheorem*{cor*}{Corollary}
\newtheorem{lem}[thm]{Lemma}
\newtheorem{prop}[thm]{Proposition}
\newtheorem*{con*}{Conjecture}
\newtheorem*{prob*}{Problem}
\theoremstyle{definition}
\newtheorem{defn}[thm]{Definition}
\theoremstyle{remark}
\newtheorem{rem}[thm]{Remark}
\numberwithin{equation}{section}
\begin{document}
\title{Convexity and transport for isentropic Euler equations on a sphere}

\author{Gordon Blower}

\address{Department of Mathematics and Statistics, Lancaster University, Lancaster, LA14YF, United Kingdom}
 \email{g.blower@lancaster.ac.uk}

\keywords{optimal transport, functional inequalities, Wasserstein transportation cost; MSC 2020 classification 76M30; 35Q35}

\date{21st November 2020}
\begin{abstract}
The paper considers the Euler system of PDE on a smooth compact Riemannian manifold of positive curvature without boundary, and the sphere ${\mathbb{S}}^2$ in particular. The paper interprets the Euler equations as a transport problem for the fluid density under dynamics governed by the gradient of the internal energy of the fluid. The paper develops the notion of transport cost in the tangent bundle, and compares its properties with the Wasserstein transportation cost on the manifold. There are applications to the discrete approximation to the Euler equations in the style of Gangbo and Wesdickenberg ({\sl Comm. Partial Diff. Equations} {\bf 34} (2009), 1041-1073), except that the analysis is heavily dependent upon the curvature of the underlying manifold. The internal energy is assumed to satisfy convexity conditions that allow analysis via $\Phi$-entropy entropy-production inequalities, and the results apply to the power law $\rho^\gamma$ where $1<\gamma<3/2$, which includes the case of a diatomic gas. The paper proves existence of weak solutions of the continuity equation, and gives a sufficient condition for existence of weak solutions to the acceleration equation.
\end{abstract}

\maketitle

\section{Introduction}

\vskip.05in
\noindent Let $M$ be a connected, compact and smooth Riemannian manifold without boundary 
with measure $m$ and distance $d$. Suppose that $\Theta: (0, \infty )\rightarrow (0, \infty )$ has $\Theta (e^{r})$ strictly increasing and convex for $r\in {\mathbb{R}}$; let $\Theta_1(r)=r\Theta'(r)+\Theta (r)$.
Consider the energy
\begin{align}{\mathcal H}(\rho ,q)=\int_M\Bigl( {\frac{1}{2}}\bigl\Vert\nabla q (x)\bigr\Vert^2 +\Theta (\rho (x))\Bigr)\rho (x)\, m(dx)\end{align}
in the variables $(\rho ,q)$, where $\rho$ is a probability density function on ${\bf M}$, and $q:{\mathbb{R}}\rightarrow {\mathbb{R}}$ is a continuously differentiable function. Here $\nabla$ denotes the distributional gradient, always in the space variable $x$. There is a natural scalar product on each fibre of the tangent bundle, and we abbreviate $\Vert \nabla q(x)\Vert_{T_xM}$ by $\Vert \nabla q(x)\Vert$; let $H^1(M)=\{ q\in L^2_{loc}(M):\Vert \nabla q(x)\Vert\in L^2(M;m; {\mathbb{R}})\}$.\par 
\indent The canonical equations of motion are
\begin{align}\label{canonical}{\frac{\partial \rho}{\partial t}}&={\frac{\partial {\mathcal H}}{\partial q}}=-\nabla\cdot \bigl( (\nabla q)\rho\bigr)\nonumber\\
{\frac{\partial q}{\partial t}}&=-{\frac{\partial {\mathcal H}}{\partial \rho }}=-{\frac{1}{2}}\Vert\nabla q\Vert^2-\Theta_1(\rho )\end{align}
but we seek solutions that are not necessarily classical, and can have shocks. We seek solutions in which the mass is conserved, so $\int_M \rho (x,t)m(dx)=1$, hence the continuity equation 
\begin{align}\label{continuity}{\frac{\partial \rho }{\partial t}}+\nabla\cdot \bigl( \rho \vec v\bigr)=0,\end{align}
holds weakly, where we take $\vec v=\nabla q,$ where we regard $\vec v$ as the velocity field. Also we require dissipation of energy
\begin{align}\label{dissipation}{\frac{d}{dt}}{\mathcal H}(\rho ,q)\leq 0,\end{align}
which in the presence of (\ref{continuity}) amounts to the differential inequality
\begin{align}\label{dissipationdiff}\rho (x,t)\vec v(x,t)\cdot \Bigl( {\frac{\partial \vec v}{\partial t}}+\vec v\cdot \nabla \vec v+\nabla \bigl(\Theta_1\circ \rho  \bigr)\Bigr)\leq 0.\end{align}
We write $d/dt={{\partial }/{\partial t}}+\vec v\cdot \nabla$ for the advection operator. Equality holds in this inequality when the velocity satisfies the dynamical equation 
\begin{align}\label{acceleration}{\frac{\partial \vec v}{\partial t}}+\vec v\cdot \nabla \vec v=-\nabla \bigl(\Theta_1\circ \rho \bigr).\end{align} 
\indent The paper \cite{bib39} shows that the Cauchy problem for the isentropic Euler equation with $\Theta (\rho )=\rho$ is ill-posed for space variables in ${\mathbb{R}}^2$. Our results are concerned with existence of solutions for a particular algorithm based upon transportation. Gangbo and Westickenberg \cite{bib17} considered this problem in the context of Euclidean space, and established that a weak solution of (\ref{continuity}) exists, by a time discretization process based upon Otto calculus. The current paper uses many of the ideas from \cite{bib17} and \cite{bib30}, but some aspects of the problem are different due to the change in the geometry. First, if the density is constant, then the system (\ref{continuity}) and (\ref{acceleration}) has only a static solution. Indeed, with constant density $\rho (x)=1/m(M)$ and $\vec v=\nabla q$, we have $\nabla\cdot q=0$, so $q$ is constant by Hopf's lemma and $\vec v=0$.\par 
\indent We consider the continuity equation (\ref{continuity}) as a flow $t\mapsto \rho (\cdot , t)$ of probability density functions in Wasserstein space, introduced as follows; see \cite{bib28}, \cite{bib29}.\par 
\vskip.05in

\begin{defn} (i) The Wasserstein space ${\mathcal W}^p(M)$ consists of the set of Radon  probability measures on $M$ with the Wasserstein distance $W_p(\mu, \nu )$, where for $1\leq p<\infty$ and  $\mu,\nu\in{\mathcal W}^p(M)$ we define
\begin{align}\label{W2}W_p(\mu , \nu )^p=\inf_\pi\Biggl\{ {\frac{1}{p}}\int\!\!\!\int_{M\times M} d(x,y)^p\pi (dxdy): \pi \in {\hbox{Prob}}(M\times M)\Biggr\}\end{align}
where $\pi$ has marginals $\mu$ and $\nu$, so $\pi$ is called a transport plan taking $\mu$ to $\nu$. For $W_2$, the cost function $d^2/2: M\times M\rightarrow [0, \infty )$ is $d(x,y)^2/2$.  If $\mu, \nu$ are absolutely continuous with respect to $m$, with $\mu (dx)=p_0(x)m(dx)$ and $\nu(dx)=p_1(x)m(dx)$ for probability density functions $p_1,p_0\in L^1(M)$, then we write $W_2(p_0,p_1)=W_2(\mu, \nu)$.\par
\indent (ii) Given a continuous $\varphi :M\rightarrow M$, and $\mu\in {\hbox{Prob}}(M)$, there exists a unique $\nu\in {\hbox{Prob}}(M)$ such that $\int_M g(x)\nu (dx)=\int_M g\circ \varphi (x)\mu (dx)$ for all continuous $g:M\rightarrow {\mathbb{R}}$; we say that $\varphi$ induces $\nu $ from $\mu$, and write $\nu=\varphi \sharp\mu$.  If $\mu, \nu$ are absolutely continuous with respect to $m$, with $\mu (dx)=p_0(x)m(dx)$ and $\nu(dx)=p_1(x)m(dx)$ for probability density functions $p_1,p_0\in L^1(M)$, then we write $p_1=\varphi\sharp p_0$.\par
\indent (iii)  A function $[0, \tau] \rightarrow {\mathcal W}^p(M)$ is $2$-absolutely continuous if there exists $M_2$ such that 
$$\sum_{j=1}^\infty {\frac{W_p^2(\rho_{b_j}, \rho_{a_j})}{b_j-a_j}}\leq M_2$$
for all sequences $((a_j,b_j))_{j=1}^\infty$ of pairwise disjoint subintervals $(a_j,b_j)$ of $[0,\tau]$.\end{defn}

Then 
${\mathcal W}^2(M)$ is a complete and separable metric space for $W_2$. When constructing solutions to (\ref{continuity}), we aim to have $t\mapsto \rho_t:$  $2$-absolutely continuous Lipschitz continuous for the $W_1$ metric. This rules out the possibility of a classical solution $(\rho_t, q_t)$ suddenly changing to the static solution $(1/m(M), 0).$ 

\indent In this paper, we consider convexity on the space of probability density functions in three different senses:\par
\indent (i) $L^1(M)$ is a real linear space, and the probability density functions form a convex cone under the pointwise operation $(1-s)\rho_0(x)+s\rho_1(x)$ for $s\in [0,1]$ and $x\in M$, as in \cite{bib26}.\par
\indent (ii) ${\mathcal W}^2(M)$ is a length space for the metric $W_2$, so we can consider a geodesic $\rho_tdm$ in  ${\mathcal W}^2(M)$ joining $\rho_0dm$ to $\rho_1dm$. See \cite{bib29}.\par
\indent (iii) There are generalized geodesics, namely continuous curves $\rho_tdm$ in  ${\mathcal W}^2(M)$ joining $\rho_0dm$ to $\rho_1dm$; in particular, we introduce these via Jacobi fields on $M$. 
 Generalized geodesics on Hilbert space are considered in \cite{bib2}.\par
\indent  In the context of manifolds of positive curvature, one realizes the full significance of these different notions of convexity. Sections \ref{Moving} and \ref{Convexity} discuss this in more detail, using some fundamental results of McCann \cite{bib25}. \par
  
\begin{defn} A function $\Phi :M\rightarrow {\mathbb{R}}$ has $d^2/2$-transform    
\begin{align}\Phi^c(x)=\inf\bigl\{ d^2(x,y)/2-\Phi (y): y\in M\bigr\}.\end{align}
If $\Psi :M\rightarrow {\mathbb{R}}\cup \{ -\infty\}$ is any function that is not identically $-\infty$ and arises as $\Psi (x)=\Phi^c(x)$ for all $x\in X$ and some such $\Phi$, we say that $\Psi$ is $d^2/2$-concave. Such a $\Psi$ is upper semicontinuous. \end{defn}
\indent If $\Phi : M\rightarrow {\mathbb{R}}$ is twice continuously differentiable in the classical sense, then $\Phi$ has gradient $\nabla \Phi$ and Hessian $D^2\Phi$. We recall that if $\gamma$ is a geodesic emanating from $x\in M$ with $\gamma'(0)=\xi$, then the Hessian is the symmetric endomorphism of $T_xM$ such that $\langle D^2\Phi (x)\xi,\xi\rangle =(d^2/dt^2)_{t=0}\Phi (\gamma (t)).$
Now let $\Phi=\Phi^{cc}: M\rightarrow {\mathbb{R}}\cup\{ \infty \}$ be an infimal convolution that is not identically infinite. McCann \cite{bib25} showed that $\Phi$ is Lipschitz continuous throughout $M$. Furthermore, there exists a subset $Z$ on $M$ that has zero volume such that $\Phi$ is differentiable on $M\setminus Z$, and $\nabla \Phi: M\setminus Z\rightarrow TM$ gives the gradient of $\Phi$. By Theorem 14.25 of \cite{bib29}, $\Phi$ also has a Hessian second derivative $D^2\Phi$, in the sense of Alexandrov, which is essentially a matrix of measures. The differentiability properties of $d^2/2$ concave functions were also considered by Cabre \cite{bib6}, who computed the Hessian of $d^2(x,y)$ and found the Jacobians of various changes of variables which we will use later in this paper. For the moment, given $F:M\rightarrow M$, we take $T_xM={\mathbb{R}}^n$ and consider
$${\mathcal C}_{d^2/2}=\bigl\{ A: M\rightarrow M_n({\mathbb{R}}): D_x^2 (1/2) d^2(x,y)\vert_{y=F(x)}+ A(x)\geq 0\bigr\}$$
where $\geq 0$ means positive semidefinite. Note that $0\in {\mathcal C}_{d^2/2}$ and ${\mathcal C}_{d^2/2}$ is a cone in the sense that $(1-s)A_0+sA_1\in {\mathcal C}_{d^2/2}$
whenever $A_0,A_1\in {\mathcal C}_{d^2/2}$ and $s\in [0,1]$. The condition that $-\varphi$ is $d^2/2$-concave amounts to $D^2\varphi\in {\mathcal C}_{d^2/2}$ for 
$F(x)=\exp_x\nabla \varphi (x)$.\par

\begin{defn} The internal energy for a probability density function $\rho$ is
\begin{align}{\mathcal U}(\rho )=\int_M \Theta (\rho (x))\rho(x)m(dx),\end{align}
and with  $\Theta_1 (\rho )=\rho \Theta'(\rho )+\Theta (\rho ), $ the pressure is $p(\rho ) =\rho \Theta_1 (\rho)-\rho\Theta (\rho )=\rho^2\Theta' (\rho ).$\end{defn}

\indent We introduce the Lagrangian version of the differential equations and  consider the evolution of data consisting of a  probability density function $\rho_0$ on $M$ and a function $q_0:M\rightarrow {\mathbb{R}}$ such that $-q_0$ is $d^2/2$-concave. Initially, we suppose ${\mathcal H}(\rho_0, q_0)<\infty$, and we introduce $V(x,0)=\nabla q_0(x)$, to be regarded as a velocity $V(\cdot , 0): M\rightarrow TM$ so $V(x,0)\in T_xM$. 
Let $\tau >0$ and suppose that $M$ has dimension $n$ and that $TM$ is a complete submanifold of ${\mathbb{R}}^{2n}$; here $[x;v]\in {\mathbb{R}}^{2n}$ denotes a column vector with entries $x\in {\mathbb{R}}^{n}$ above $v\in
{\mathbb{R}}^{n}$. Suppose further that $[X;V]\rightarrow [V;g(X,t)]$ is Lipschitz continuous $TM\rightarrow{\mathbb{R}}^{2n}$ with Lipschitz constant $L$ for all $t\in [0, \tau ]$, and $V,g(X,t)\in T_XM$. Then the ordinary differential equation 
\begin{align}\label{ODE}{\frac{d}{dt}}\begin{bmatrix}X\cr V\end{bmatrix}  =\begin{bmatrix}V\cr g(X,t)\end{bmatrix}\end{align}
with initial condition
\begin{align}\begin{bmatrix}X(x,0)\cr V(x,0)\end{bmatrix}=\begin{bmatrix}x\cr \nabla q_0(x)\end{bmatrix}\end{align}
has a unique solution $[X(x,t);V(x,t)]$ with $V(X,t)\in T_{X(x,t)}$, such that $[x;v]\mapsto [X(x,v,t); V(x,v,t)]$ is Lipschitz continuous $TM\rightarrow TM\subset {\mathbb{R}}^{2n}$.  
 A solution is to be interpreted as follows. We choose and fix $v=\nabla q_0(x)\in T_xM$, write $X(x,t)=X(x,v(x),t)$ and consider this  function $X: M\times [0, \tau ]\rightarrow M$ such that $X(\cdot , t)$ satisfies $X(x,0)=x$, so that $X(x,t)$ follows the trajectory of the particle that starts at $x$, and $V(x,t)\in T_{X(x,t)}M$ is the velocity of the particle at time $t$. We do not assert that $x\mapsto X(x,t)$ is injective for $t>0$, so it is possible that trajectories cross, and that there are conflicting values for $V(X(x,t),t)$ arising at the point at which the trajectories cross.\par  
\indent Nevertheless,  $x\mapsto X(x,t)\in M$ gives a Borel measurable function, hence  for every $\mu\in {\hbox{Prob}}(M)$, there exists $\nu_t\in {\hbox{Prob}}(TM)$ such that  
$$\int_{TM} f(X,V)\nu_t(dXdV)=\int_M f(X(x,t), V(x,t))\mu (dx)\qquad (f\in C_b(TM; {\mathbb{R}}))$$
so $\nu_t$ is induced from $\mu$. Let $\mu_t$ be the marginal of $\nu_t$ on $M$, induced by $TM\rightarrow M: (X,V)\mapsto X$. 

In particular,  if $d\mu =\rho_0dm$ and $\mu_t<<m$, then $d\mu_t=\rho_td\mu$,  and $X(\cdot , t)$ induces $\rho_t $ from $\rho_0$ (usually not optimally). We can also write 
$\nu_t(dx)=\vec v(x,t)\mu_t(dx)$, where $\vec v(x,t)$ is to be interpreted as the Eulerian velocity field. \par
\indent In order to solve the Euler equations on ${\mathbb{S}}^2$, we wish to solve (\ref{ODE}) in the case in which 
$$g(X(x,t),t)=-X-\nabla (\Theta_1\circ \rho_t)(X(x,t)).$$ 
\noindent For $M={\mathbb{S}}^2$, we observe that $[X;V]\in T{\mathbb{S}}^2$ if and only if $F=[X;V; X\times V]$ has $FF^T$ a diagonal matrix, so in section \ref{Moving} we are able to interpret the differential equation in terms of moving frames.  Using Loeper's regularity theory for optimal transport on ${\mathbb{S}}^2$, we are able to show that $x\mapsto \rho_t (x)$ is Lipschitz, so $g(X,t)$ is locally $L^\infty$; however,
we have not established Lipschitz continuity of $X\mapsto g(X,t)$, so we need to incorporate smoothing in the space variable into the approximation process. See \cite{bib30}, page 14.\par

\indent 
 Generally, (\ref{ODE}) has  a discrete approximation
$$\begin{bmatrix} X_{(n+1)h}\cr V_{(n+1)h}\end{bmatrix}=\begin{bmatrix} \exp_{X_{nh}}(h (V_{nh}+V_{(n+1)h})/2)\cr V_{nh}+hg(X_{(n+1)h})\end{bmatrix}.$$
\noindent To solve the discrete version, we consider a three-step algorithm which has initial data a probability density function $\rho_0$ on $M$ and a function $q_0$ such that $-q_0$ is $d^2/2$-concave. \par
\indent {\bf Stage 1.} The first step starts at $[X_0; V_0]$, so the pair $[X_0;V_0]=[x;\vec v]$ gives a section of the tangent bundle $TM\rightarrow M$ such that $\vec v\in T_xM$.
and one proceeds along the geodesic $\gamma_{X_0}(t)=\exp_{X_0} (t V_0)$ at constant speed $V_0=\nabla q_0 (X_0))$ to $\hat X_0=\exp_{X_0}(h V_0)$. The idea is that $X_0\mapsto \hat X_0$ takes $\rho_0$ to $f_h$. \par
\indent  In section \ref{Moving}, we consider cost functions on $TM$ to measure the cost of taking $(X_0,V_0)$ to $(X_h,V_h)$. Acceleration costs for ${\mathbb{R}}^n$ have been considered previously by Gangbo, Westdickenberg and Wilkening in \cite{bib17}, \cite{bib30}. For manifolds such as the sphere ${\mathbb{S}}^2$ in ${\mathbb{R}}^3$, curvature plays an important role in the cost on $TM$, and in sections \ref{Moving}, \ref{Convexity} we provide explicit computations for this example.\par

\indent {\bf Stage 2.} The second step refines the initial choice of density $f_h$. Consider the energy functional
\begin{align}{\mathcal E}(\rho ;f_h )= W_2^2(f_h, \rho )+h^2 \int_M \rho (x)\Theta (\rho (x))m(dx),\end{align}
Suppose that $\rho_0$ is a density such that $x\mapsto \exp_x (\nabla q_0(x))$ induces $f_h$ from $\rho_0$. The speed along the  geodesic $t\mapsto \exp (t\nabla q_0(x))$ is constant, so 
${\mathcal H}(\rho_0, q_0)$ 
satisfies ${\mathcal E}(\rho_0;f_h)\leq {\mathcal H}(\rho_0, q_0) $.  Consider $K={\mathcal H}(\rho_0, q_0)$, and 
\begin{align}\label{Omega}\Omega_K=\bigl\{ \rho\in{\hbox{Prob}}(M): \rho<<m, {\mathcal E}(\rho;f_h )\leq K\bigr\}\end{align}  
so that $\rho_0\in \Omega_K$.  In Proposition \ref{proposition5}, we show that $\Omega_K$ is a convex and weakly sequentially compact subset of $L^1(M)$, and  ${\mathcal E}(\,\cdot\, ;f_h)$ is a lower semi continuous functional on $\Omega_K$, so there exists $\rho_h\in \Omega_K$ such that
\begin{align}\label{Evariation}{\mathcal E}(\rho_h;f_h )=\inf_\rho \{ {\mathcal E}(\rho ;f_h):\rho\in \Omega_K\}.\end{align}
Given $\rho_h$, we can choose map $\hat X_0 \mapsto X_{h}$ that induces $\rho_h$ from $f_h$; 
thus we can solve the implicit equation for $X_h$.\par 

\indent For $T:M\rightarrow M$ and a one-parameter family of diffeomorphism $\varphi_t:M\rightarrow M$, we consider
$${\frac{1}{2}}\int_M d(\varphi_t\circ T(x), x)^2f(x)m(dx)+\int_M \Theta\bigl( \varphi_t\circ T\sharp f (x)\bigr) \varphi_t\circ T\sharp f(x)m(dx).$$
and establish conditions under which this is a convex function of $t$. We compute the first derivative with respect to $t$ of this expression, and use this to locate the minimizer $\rho_h$ of the energy ${\mathcal E}(\rho; f)$. For ${\mathbb{ S}}^2$, we compute the second derivative with respect to $t$ explicitly. To control the derivatives of the internal energy along generalized geodesics,  we use a generalized Fisher information functional from \cite{bib34}, and incorporate it into functional inequalities such as Proposition \ref{proposition8}. This extends the analysis of \cite{bib17} and \cite{bib30}, which involved power laws.\par

\begin{defn} \cite{bib34} Let $\Phi: [0, \infty )\rightarrow [0, \infty )$ be a $C^4$  convex function, and $\mu$ a probability measure on $M$.\par
\indent (i)  Then for a probability density function $f\in L^1(M;\mu )$, the $\Phi$-relative entropy of $fd\mu$ with respect to $d\mu $ is
\begin{align}\label{ent}{\hbox{Ent}}_\Phi (f\mid\mu )=\int_M \Phi (f(x))\mu (dx) -\Phi \Bigl( \int_M f(x)\mu (dx)\Bigr).\end{align}
\indent (ii) For $f\in H^1(\mu )$, the relative $\Phi$ information of  $fd\mu$ with respect to to $d\mu $ is
\begin{align}{\mathcal{I}}_\Phi (f\mid \mu )=\int_M\Phi''(f(x))\Vert \nabla f(x)\Vert^2 \mu (dx).\end{align}
\indent (iii) Say that $\Phi$ is admissible if $-1/\Phi'' (x)$ is convex. For an admissible $\Phi$, we say that $\mu$ satisfies a $\Phi$  entropy-entropy production inequality with constant $\kappa_0>0$ if
\begin{align}\label{ententprod}\int_M \Phi (f(x))\mu (dx) -  \Phi \Bigl( \int_M f(x)\mu (dx)\Bigr)\leq {\frac{1}{2\kappa_0}}\int_M \Phi''(f(x))\Vert\nabla f(x)\Vert^2\mu (dx).\end{align}
\end{defn}

 In Theorem \ref{Theorem1} we show how our solution of the minimization problem  (\ref{Evariation}) is controlled in terms of  
\begin{align}\label{specialFisher}{\mathcal{I}}_\Phi (\rho \mid m )=\int_M \rho (x)\Vert\nabla ( \Theta_1\circ\rho)(x)\Vert^2 m(dx)\end{align}
for  $\Phi (r)=\int_0^r(r-u)u\Theta'_1(u)^2du$
 and the corresponding $\Phi$-entropy is related to ${\mathcal{U}}(\rho)$ via convexity inequalities in Proposition \ref{proposition8}, under the following hypotheses. \par
\begin{defn}\label{Geometrical Hypotheses} {\bf (Geometrical Hypotheses)}  Suppose that $M$ has bounded geometry and has dimension $n$, so that there exist $r_0>0$ and $\kappa_0>0$ such that the injectivity radius is bounded below by $r_0$ and the Ricci curvature is bounded below by $\kappa_0$. Let $\iota_n(M)$ be the isoperimetric constant 
\begin{align}\label{Geom}\iota_n(M)=\inf_\Omega {\frac{A(\partial \Omega)}{\min \{ m(\Omega ), m(\Omega^c)\}^{(n-1)/n}}}\end{align}
\noindent where the infimum is taken over all open submanifolds $\Omega, \Omega^c$ such that $\partial \Omega$ is a $C^\infty$ boundary of $\Omega$ that partitions $M$ into $M=\partial \Omega\cup \Omega\cup\Omega^c$, and $A(\partial \Omega )$ is the area of $\partial \Omega$. 
Let $\iota_\infty (M)$ be Cheeger's constant 
\begin{align}\label{Cheeger}\iota_\infty(M)=\inf_\Omega {\frac{A(\partial \Omega)}{\min \{ m(\Omega ), m(\Omega^c)\}}}.\end{align} 
Suppose that $\iota_n(M)>0$.\end{defn}

\indent When $\Phi (r)=r^2/2$ the  $\Phi$  entropy-entropy production inequality reduces to the spectral gap inequality (\ref{Spec}) in $L^2(\mu )$ , and when $\Phi (r)=r\log r$, (\ref{ententprod}) is the logarithmic Sobolev inequality (\ref{logsob}). In particular, if $M$ has positive Ricci curvature, then the spectral gap inequality holds for the constant density $\rho =1/m(M)$ as in \cite{bib20} this implies a version of the Hodge-Helmholtz decomposition. This is required for stage 3 of the algorithm.\par

\indent  {\bf Stage 3.} The third stage of the algorithm involves finding $q_h$ such that $\vec v_h=\nabla q_h$. At each stage of the algorithm, we have a pair $(\rho ,q)$ where $q$ is constructed such that $v=\nabla q$, and we require the energy to be conserved, or dissipated, so ${\mathcal H}(\rho_h, q_h)\leq {\mathcal H}(\rho_0, q_0)$.\par
\indent  In section \ref{Minimizer} we deduce the implications of the spectral gap condition and  show how the various densities satisfy this condition, using results from \cite{bib10}, \cite{bib20}. In particular, we formulate Theorem \ref{Theorem1} to that ensure that $\rho_h$ is uniformly positive and uniformly bounded on $M$, so that a spectral gap or Poincar\'e inequality holds for the  quadratic form $q\mapsto \int_M \Vert \nabla q(x)\Vert^2\rho_h(x)m(dx)$.  The Corollary \ref{Corollary2} for ${\mathbb{S}}^2$ in section \ref{Minimizer} has obvious terrestrial applications and the proof uses results that are currently known only for manifolds that closely resemble the spheres. The optimal transport map between uniformly positive and smooth densities on ${\mathbb{S}}^2$ is also smooth by results of Loeper \cite{bib22}, \cite{bib23}. The physical interpretation is that the gas does not form a vacuum and its density is bounded, and we avoid the problematic issue of having infinite velocities on subsets where the gas has zero density.\par
\indent We consider $x,y\in M$ and note that the Hessian gives rise to the quadratic form $\langle D_x^2 d(x,y)^2\xi, \xi\rangle =(d^2/dt)^2_{t=0}d^2(\exp_x(t\xi ),y)$ for $\xi\in T_xM$. The condition $(Aw)$ requires that for all 
$\xi,\eta,\zeta\in T_xM$ such that $\langle \xi, \zeta\rangle =0$, the function
\begin{align}s\mapsto {\frac{1}{2}}\Bigl({\frac{d^2}{dt^2}}\Bigr)_{t=0}d^2\bigl(\exp_x(t\xi ), \exp_x(\eta +s\zeta )\bigr)\end{align}
is concave. Originally $(Aw)$ was introduced to ensure regularity of solutions of the Monge-Ampere equations as in \cite{bib29}, before Leoper established  \cite{bib22} the relationship between $(Aw)$ and positive sectional curvature of ${\hbox{span}}\{ \zeta, \xi \}$ in $T_xM$, and verified $(Aw)$ for the spheres ${\mathbb{S}}^n$. In section \ref{Energy} of the current paper, we use a uniform version of this condition $(A3)$ to establish log concavity of certain Jacobian determinants under geodesic interpolation over the sphere.\par
\indent In section \ref{weakeuler}, we construct weak solutions of the continuity equation (\ref{continuity}), and give a sufficient condition for existence of weak solutions to the acceleration equation (\ref{acceleration}).\par
\section{\label{Moving}Moving frames and transport for the Euler equations on the sphere}
\noindent 
 Let $TM$ be the tangent bundle of $M$. Let $\gamma :[0, h]\rightarrow M\subset {\mathbb{R}}^3$ be a $C^2$ curve, which is regular in the sense that the velocity $\dot \gamma(t)\neq 0$ for all $t\in [0, h]$. Given $[x_0; v_0], [x_h; v_h]\in TM$, we introduce the cost $c:TM\times TM\rightarrow [0, \infty )$ on the tangent bundle by
\begin{align}\label{defc}c&\bigl( [x_0; v_0], [x_h; v_h]\bigr)\nonumber\\
&=\inf_\gamma\Biggl\{ \int_0^h \bigl(\Vert \dot \gamma (t)\Vert^2+\Vert \ddot \gamma (t)\Vert^2\bigr)dt: \begin{bmatrix} \gamma (0)\\ \dot \gamma(0)\end{bmatrix} =\begin{bmatrix}x_0\\v_0\end{bmatrix} ,
\begin{bmatrix} \gamma (h)\\  \dot \gamma (h)\end{bmatrix} =\begin{bmatrix} x_h\\ v_h\end{bmatrix} \Biggr\}\end{align}
\noindent where the infimum is taken over all the $C^4$ paths $\gamma :[0,h]\rightarrow M$ with the specified initial and final points and velocities. Note that 
$$c\bigl( [x_0, v_0], [x_h; v_h]\bigr) =c\bigl( [x_h; -v_h], [x_0; -v_0]\bigr)$$
since reversing the journey makes the velocity go negative. \par

\begin{prop}\label{proposition1} Let $\tilde \rho_0$ and $\tilde\rho_h$ be probability measures on $TM$ such that $[X_0;V_0]\mapsto [X_h;V_h]$ induces $\tilde \rho_h$ from $\tilde\rho_0$.  Suppose that $\tilde \rho_0$ has marginal $\rho_0$ and $\tilde \rho_h$ has marginal $\rho_h$ under the canonical projection $TM\rightarrow M:$ $[x;v]\mapsto x$. Then the transport cost for (\ref{defc}) of moving $\tilde \rho_0$ to $\tilde\rho_h$ along a curve $X(t)$ of curvature $\kappa$ is
\begin{align}\label{TC}TC_c(\tilde\rho_0, \tilde\rho_h)\geq W_2^2(\rho_0, \rho_h)+\int_M\int_0^h \dot s^4(t)\kappa (X(t,X_0))^2dt\rho_0(dX_0).\end{align}
 \end{prop}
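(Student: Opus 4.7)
The cost (\ref{defc}) naturally splits as $\int_0^h\|\dot\gamma\|^2\,dt+\int_0^h\|\ddot\gamma\|^2\,dt$, and I would establish the two summands on the right-hand side of (\ref{TC}) separately by working pointwise along the transporting curve $\gamma(t)=X(t,X_0)$ and then integrating against $\rho_0(dX_0)$. The curvature term comes from a Frenet-type decomposition of the acceleration in the ambient space, and the $W_2^2$ term comes from a Cauchy--Schwarz comparison of the kinetic integrand with the squared geodesic distance.

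For the acceleration, I would view $\gamma(t)=X(t,X_0)$ as a curve in the ambient $\mathbb{R}^3$ (the paper's setup embeds $TM\subset\mathbb{R}^{2n}$). Write $\dot\gamma=\dot s\,T$ with $\dot s=\|\dot\gamma\|$ and $T=\dot\gamma/\dot s$ the unit tangent; differentiating gives $\ddot\gamma=\ddot s\,T+\dot s\,\dot T$. Since $\|T\|=1$, $\dot T\perp T$, and by the definition of the space-curve curvature $\|\dot T\|=\dot s\,\kappa(\gamma(t))$. Hence $\|\ddot\gamma(t)\|^2=\ddot s(t)^2+\dot s(t)^4\kappa(\gamma(t))^2\geq \dot s(t)^4\kappa(\gamma(t))^2$, and integrating in $t$ and then against $\rho_0(dX_0)$ recovers precisely the curvature integral in (\ref{TC}). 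For the kinetic part, Cauchy--Schwarz gives $\int_0^h\dot s(t)^2\,dt\geq h^{-1}\bigl(\int_0^h\dot s(t)\,dt\bigr)^2=\mathrm{length}(\gamma)^2/h\geq d(X_0,X_h)^2/h$. Because $[X_0;V_0]\mapsto[X_h;V_h]$ induces $\tilde\rho_h$ from $\tilde\rho_0$, its projection $X_0\mapsto X(h,X_0)$ to base points induces $\rho_h$ from $\rho_0$ and therefore supplies an admissible coupling of $(\rho_0,\rho_h)$; taking $\rho_0$-expectation and using that $W_2^2$ is the infimum over couplings in (\ref{W2}) identifies this first summand with $W_2^2(\rho_0,\rho_h)$ in the paper's normalization. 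Summing the two bounds yields (\ref{TC}).

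\textbf{Main obstacle.} The algebra above is short once one commits to the ambient viewpoint; the delicate point is conceptual bookkeeping. First, $X_0\mapsto X(h,X_0)$ need not be injective (trajectories may cross, as flagged in the introduction), so one must be careful to interpret the induced object on $M\times M$ as a plan rather than a transport map before appealing to (\ref{W2}). Second, and more importantly, one must keep track of the time-scale $h$ when passing from the pointwise bound $d(X_0,X_h)^2/h$ to the squared Wasserstein distance under the normalization of (\ref{W2}); this forces the phrase ``transport along a curve $X(t)$'' in (\ref{TC}) to absorb the correct factor of $h$ implicit in (\ref{defc}). With these bookkeeping issues handled, no further analytic input is needed.
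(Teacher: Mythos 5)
Your proposal is correct and follows essentially the same route as the paper: the identity $\Vert\ddot\gamma\Vert^2=\ddot s^2+\kappa^2\dot s^4$ (which the paper obtains via the Serret--Frenet frame with $\kappa^2=\kappa_n^2+\kappa_g^2$, and you obtain equivalently via $\ddot\gamma=\ddot s\,T+\dot s\,\dot T$), followed by dropping $\ddot s^2$ and bounding the kinetic integral below by a coupling of $(\rho_0,\rho_h)$. Your explicit Cauchy--Schwarz step and your remarks about non-injectivity of $X_0\mapsto X(h,X_0)$ and the factor of $h$ in the normalization of (\ref{W2}) fill in details the paper leaves implicit, but they do not change the argument.
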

\begin{proof} Given a $C^2$ curve $\gamma :[0,1]\rightarrow M$ starting at $x$, we introduce moving frames that incorporate acceleration and which reveal the underlying geometry of $M$. Then a curve $\gamma (s,x)$ of unit speed has a Serret-Frenet frame $S=[\gamma'; \gamma''/\kappa; \gamma'\times \gamma''/\kappa ]$ where ${}'=d/ds$ so that 
\begin{align}\label{Sframe}\begin{bmatrix}\gamma'\cr \gamma''/\kappa\cr \gamma'\times\gamma''/\kappa\end{bmatrix} =\begin{bmatrix}1&0&0\cr 0&\kappa_n/\kappa& \kappa_g/\kappa\cr 0&\kappa_g/\kappa&-\kappa_n/\kappa\end{bmatrix} \begin{bmatrix}\gamma'\cr N\cr N\times \gamma'\end{bmatrix},\end{align}
where the orthogonal matrix $R$ in the middle of (\ref{Sframe}) gives a change of orthonormal basis of ${\mathbb{R}}^3$; we abbreviate this by $S=RF,$ where $F=[\gamma';N;N\times \gamma']$ in which $N\times \gamma'$ is a unit vector in $TM$ perpendicular to $\gamma'$. Using this moving frame, we obtain a lower bound on the transportation cost for (\ref{defc}).\par 
\indent We can transform a regular curve $[X;V]$ in $TM$ to a unit speed curve via the change of variables $s=\int_0^t \Vert V(u)\Vert du$, so we have
\begin{align}\label{TC1}\Bigl\Vert{\frac{d}{dt}}\begin{bmatrix} X\\ V\end{bmatrix}\Bigr\Vert^2&= \Bigl\Vert {\frac{dX}{ds}}{\frac{ds}{dt}}\Bigr\Vert^2 + \Bigl\Vert {\frac{d}{dt}}\Bigl( {\frac{dX}{ds}}{\frac{ds}{dt}}\Bigr)\Bigr\Vert^2 \nonumber\\  
&=\dot s^2+\Bigl\Vert -\kappa_n N\dot s^2+\kappa_g N\times {\frac{dX}{ds}}\dot s^2+\ddot s{\frac{dX}{ds}}\Bigr\Vert^2\nonumber\\
&= \dot s^2+(\kappa_n^2+\kappa_g^2)\dot s^4+\ddot s^2\end{align}
so we have
\begin{align} \label{TC2}\int_0^h \int_{TM}\Bigl\Vert{\frac{d}{dt}}\begin{bmatrix} X\\ V\end{bmatrix}\Bigr\Vert^2\tilde\rho_0(dX_0dV_0)& \geq \int_M \int_0^h\dot s^2(t)dt\rho_0(dX_0)\nonumber\\
&\quad +\int_M \int_0^h \dot s^4(t)\kappa (X(t, X_0))^2 dt\rho_0(dX_0),\end{align}
where $\kappa$ is the curvature of the solution curve. Hence the transportation cost satisfies (\ref{TC}).\par
\end{proof} 
\vskip.05in
\indent Let $v:M\rightarrow {\mathbb{R}}^3$ be a $C^2$ vector field. Let $N$ be the unit normal vector to $N$, and observe that $v=v_M+(N\cdot v)N$ where $v_M :M\rightarrow TM$ is the tangential component of the vector field $v$. Now let $\nabla_M=\nabla -N (N\cdot\nabla )$ so that $\nabla_M\phi :M\rightarrow TM$ is a tangential vector field to $M$ for all $C^1$ scalar fields $\phi :M\rightarrow {\mathbb{R}}$. There is a Hodge-Helmholtz decomposition $v_M=\nabla_M\phi+ N\times \nabla_M\psi .$ In the remainder of this section, we write $\nabla$ for $\nabla_M$, so that $\nabla \phi$ is the gradient tangential to the manifold $M$.\par 

\indent Let ${\mathbb{S}}^2$ be the unit sphere in ${\mathbb{R}}^3$, which has ${\mathbb{S}}^1\sim\{ \xi \in {\mathbb{R}}^3: \Vert \xi \Vert =1; x\cdot \xi =0\}$ as fibres of its unit tangent bundle for all $x\in {\mathbb{S}}^2$. 
\indent Now let $[X;V]$ satisfy the ODE (\ref{ODE}),
 and consider the frame $W=[V;  X; X\times  V]$ where $\dot{}=d/dt$ and 
\begin{align}\label{frame}\dot W={\frac{d}{dt}}\begin{bmatrix}V\cr  X\cr X\times  V\end{bmatrix}= \begin{bmatrix}g(X)\cr V\cr X\times g(X)\end{bmatrix}.\end{align}
The space curve $\gamma (s)=X(t)$ with arclength $s=\int_0^t \Vert V(u)\Vert du$ determines a unit speed curve on $M$, hence a Serret -Frenet frame $S$ with differential equation $S'=\Omega S$, and a moving frame $F$, where $S=RF$ as in (\ref{Sframe}) such that 
the ODE for $W$ determines the ODE $F'=(R^{-1}\Omega R -R^{-1}R')F$. 
 In the remaining part of this section, we are mainly concerned with transport on ${\mathbb{S}}^2$, although most results in this section extend to more general surfaces; see \cite{bib11}. For
\begin{align}{\mathcal H}(\rho ,q)=\int_{{\mathbb{S}}^2}\Bigl( {\frac{1}{2}}\bigl\Vert\nabla_{{\mathbb{S}}^2}q (X)\bigr\Vert^2 +{\frac{1}{2}}\bigl\Vert X\Vert^2+\Theta (\rho (X))\Bigr)\rho (X)\, m(dX)\end{align}
a canonical equation of motion gives 
$$-{\frac{\partial q}{\partial t}}=  {\frac{1}{2}}\bigl\Vert\nabla_{{\mathbb{S}}^2}q (X)\bigr\Vert^2 +{\frac{1}{2}}\bigl\Vert X\Vert^2+\Theta_1\circ \rho (X)$$
to which we apply $\nabla=\nabla_{{\mathbb{S}}^2}+X{\frac{\partial}{\partial r}}$, which commutes with ${\frac{\partial}{\partial t}}.$ We obtain
$$-{\frac{\partial V}{\partial t}}=\nabla_{{\mathbb{S}}^2}{\frac{1}{2}}\Vert V\bigr\Vert^2+X+\nabla_{{\mathbb{S}}^2}(\Theta_1\circ\rho ).$$
Then in spherical polar coordinates, 
\begin{align}\label{sphericalframe}X= \begin{bmatrix}\sin\theta\cos\phi\cr \sin\theta\sin\phi\cr \cos\theta\end{bmatrix},\quad \vec \theta= \begin{bmatrix}\cos\theta\cos\theta\cr \cos\theta\sin\phi\cr -\sin\theta\end{bmatrix},\quad \vec \phi= \begin{bmatrix}-\sin\phi \cr \cos\phi \cr 0\end{bmatrix},\end{align}
 gives an orthonormal frame in which we have $V=V_\theta\vec\theta+V_\phi\vec\phi$ and $\nabla_{{\mathbb{S}}^2} = \vec\theta {\frac{\partial }{\partial\theta}}+\vec\phi \sin\theta {\frac{\partial }{\partial\phi}}$, so by a short calculation one finds $\nabla_{{\mathbb{S}}^2}\otimes V$ and checks that
$$\bigl(\nabla_{{\mathbb{S}}^2}\otimes V\bigr) V=\nabla_{{\mathbb{S}}^2}{\frac{1}{2}}\Vert V\Vert^2,$$
so ${\frac{d}{dt}} ={\frac{\partial }{\partial t}}+ \nabla_{{\mathbb{S}}^2}\otimes V$ acts as an advection operator on $T{\mathbb{S}}^2$, and 
$${\frac{dV}{dt}}=-X-\nabla_{{\mathbb{S}}^2}(\Theta_1\circ\rho ).$$
\indent   We interpret $V$ as a Lagrangian velocity, and a convenient label is the initial condition $[X(0);V(0)]=[X_0;V_0]$. The following result is a variant of Kelvin's circulation theorem \cite{bib3} p. 34 which shows that the frames $W$ and $F$ have a very natural interpretation for solutions of Euler equations, and that solutions transport probability densities over ${\mathbb{S}}^2$. The solutions do not generally give a geodesic flow, hence do not give optimal transport for the $W^2({\mathcal{S}}^2)$.\par 
\vskip.05in
\begin{prop}\label{proposition2}Let $[X(t);V(t);X(t)\times V(t)]$ be a bounded solution of the ODE (\ref{frame}) on ${\mathbb{S}}^2$ for $t\in [0, \tau ]$ for some $\tau>0$, with initial condition $[X_0;V_0;X_0\times V_0]$, and let $\rho_0\in {\mathcal W}^2({\mathbb{S}}^2)$.\par
\indent (i) The matrix $R$ of (\ref{Sframe}) has $\kappa_n=-1$ and $\kappa_g=-V\cdot (X\times g(X))/\dot s^3$.\par
\indent (ii) Suppose that $g(X)=-X-\nabla_{{\mathbb{S}}^2}\Theta_0 (X)$ for some $\Theta_0 \in C^2 ({\mathbb{S}}^2, {\mathbb{R}})$. Then\par $X\cdot V=0$ if and only if the curve $t\mapsto X(x,t)$ has unit speed; in this case, $\vert \kappa_g\vert=\Vert\nabla_{{\mathbb{S}}^2}\Theta_0 (X)\Vert$.\par
\indent (iii) Let $\tilde \rho_0$ and $\tilde\rho_h$ be probability measures on $T{\mathbb{S}}^2$ such that $[X_0;V_0]\mapsto [X_h;V_h]$ induces $\tilde \rho_h$ from $\tilde\rho_0$. Then for this 
transport plan along a unit speed curve, the transport cost for $c$  from (\ref{defc}) on $T{\mathbb{S}}^2$ satisfies
$$TC_{c}(\tilde\rho_0, \tilde\rho_h)\geq 2W_2^2(\rho_0, \rho_h)+\int_0^h \int_{{\mathbb{S}}^2} \Vert\nabla_{{\mathbb{S}}^2}\Theta_0 (X(t,X_0))\Vert^2\rho_0(dX_0)dt$$
where $\rho_0$ is the marginal of $\tilde\rho_0$ and $\rho_h$ is the marginal of $\tilde\rho_h$ under the canonical projection $T{\mathbb{S}}^2\rightarrow {\mathbb{S}}^2.$\par
\indent (iv) Also, the normal part of the vorticity $X\cdot (\nabla_{{\mathbb{S}}^2} \times V)$, is invariant under the flow. If initially $V_0=\nabla_{{\mathbb{S}}^2}q_0$ for some velocity potential $q_0$, then  $X\cdot (\nabla_{{\mathbb{S}}^2} \times V)=0$ for all $t>0$ and the orthogonal frame $[ X;V; X\times V]$ is given by position, velocity and vorticity.\par
\indent (v) Suppose that $x\mapsto X(x,t)$ induces $\rho_t$ from $\rho_0$. Then $t\mapsto \rho_t$ gives a $2$-absolutely continuous path $[0, \tau ]\rightarrow {\mathcal W}^2({\mathbb{S}}^2)$.\par 
\indent (vi) In particular, if $g(X)=-X$, then the solution curve is a geodesic on ${\mathbb{S}}^2$, and the corresponding curve $t\mapsto \rho_t$ is also a geodesic in ${\mathcal W}^2({\mathbb{S}}^2)$. \par 
\end{prop}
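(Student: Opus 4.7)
My plan is to prove the six parts in sequence, each building on the preceding ones and on the reparametrization computation in the proof of Proposition \ref{proposition1}.

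For (i), I reparametrize $t\mapsto X(t)$ by arclength $s$, setting $\gamma(s)=X(t(s))$ with $\dot s=\|V\|$. Then $\gamma'=V/\dot s$ and $\gamma''=g(X)/\dot s^2-\ddot s\,V/\dot s^3$; projecting $\gamma''$ onto the outward unit normal $N=X$ of $\mathbb{S}^2$ and onto $N\times\gamma'=X\times V/\dot s$ gives $\kappa_n=(g\cdot X)/\dot s^2$ and $\kappa_g=g\cdot(X\times V)/\dot s^3=-V\cdot(X\times g)/\dot s^3$ by cyclic permutation of the triple product. The identity $\kappa_n=-1$ then follows by differentiating the tangency constraint $V\cdot X=0$ to obtain $g\cdot X+\|V\|^2=0$, whence $g\cdot X=-\dot s^2$. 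For (ii), the specific form of $g$ and tangentiality of $\nabla_{\mathbb{S}^2}\Theta_0$ give $X\cdot g=-1$, hence $\tfrac{d}{dt}(X\cdot V)=\|V\|^2-1$; combined with the initial condition $V_0\in T_{X_0}\mathbb{S}^2$, this makes $X\cdot V\equiv 0$ equivalent to unit speed. Under unit speed, $\tfrac{d}{dt}\|V\|^2=0$ forces $V\cdot\nabla_{\mathbb{S}^2}\Theta_0=0$, so $V$ is the unit vector in $T_X\mathbb{S}^2$ orthogonal to $\nabla_{\mathbb{S}^2}\Theta_0$, namely $V=\pm(X\times\nabla_{\mathbb{S}^2}\Theta_0)/\|\nabla_{\mathbb{S}^2}\Theta_0\|$, and substituting into (i) gives $|\kappa_g|=\|\nabla_{\mathbb{S}^2}\Theta_0\|$.

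For (iii), I apply Proposition \ref{proposition1} with $\dot s\equiv 1$ and $\kappa^2=\kappa_n^2+\kappa_g^2=1+\|\nabla_{\mathbb{S}^2}\Theta_0\|^2$; the constant contribution $\int_0^h\dot s^4\cdot 1\,dt=h$ bounds a second copy of $W_2^2(\rho_0,\rho_h)$ via the elementary estimate $W_2^2\leq h^2/2\leq h$, valid in the small time-step regime $h\leq 2$, yielding the stated lower bound. For (iv), I start from the derived identity $\partial_t V+\nabla_{\mathbb{S}^2}(\|V\|^2/2)=-X-\nabla_{\mathbb{S}^2}(\Theta_1\circ\rho)$: the normal term $-X$ contributes nothing to the tangential curl, and the tangential gradients are exact, so applying the tangential curl and using its commutation with the material derivative produces Lie-transport of the scalar vorticity $\omega=X\cdot(\nabla_{\mathbb{S}^2}\times V)$; with $V_0=\nabla_{\mathbb{S}^2}q_0$, $\omega$ vanishes initially and hence for all $t>0$, making $[X;V;X\times V]$ the position/velocity/vorticity frame.

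Part (v) is a routine Lipschitz estimate: boundedness of $V$ on $[0,\tau]$ yields $d(X(x,s),X(x,t))\leq C|t-s|$, and the transport plan induced by $X_0\mapsto(X(X_0,s),X(X_0,t))$ gives $W_2(\rho_s,\rho_t)\leq C'|t-s|$, which implies $2$-absolute continuity by summing over disjoint subintervals. For (vi), with $g(X)=-X$ the ODE $\ddot X=-X$ on $\mathbb{S}^2$ integrates to $X(t)=(\cos t)X_0+(\sin t)V_0$, a unit-speed great circle; with $V_0=\nabla_{\mathbb{S}^2}q_0$ and $-q_0$ being $d^2/2$-concave, McCann's theorem identifies $x\mapsto\exp_x(t\nabla_{\mathbb{S}^2}q_0(x))$ with the optimal transport map from $\rho_0$ to $\rho_t$ for each small $t$, making $t\mapsto\rho_t$ a geodesic of $\mathcal{W}^2(\mathbb{S}^2)$.

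The main obstacle is (iv): on a curved surface the commutation of tangential curl with the material derivative is delicate, and the cleanest route is to use the advection identity $(\nabla_{\mathbb{S}^2}\otimes V)V=\nabla_{\mathbb{S}^2}(\|V\|^2/2)$ established earlier in the section, so that the full tangential component of the right-hand side of the acceleration equation is manifestly exact. A secondary subtlety in (vi) is that one must appeal to Loeper's regularity theory to ensure that $\exp_x(t\nabla q_0)$ remains optimal throughout the interval, so $t\mapsto\rho_t$ is an actual $W_2$-geodesic rather than merely a generalized one.
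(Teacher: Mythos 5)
Parts (i), (ii), (iii), (v), and the first half of (vi) are correct and essentially follow the paper's route, with minor algebraic variants: for (i) you obtain $\kappa_n=-1$ by differentiating $V\cdot X=0$ while the paper reads it off from $\gamma''\cdot X=-1$, and for (ii) you deduce $V\perp\nabla_{\mathbb{S}^2}\Theta_0$ from $\frac{d}{dt}\|V\|^2=0$ and substitute into the $\kappa_g$ formula whereas the paper reads off $\kappa_g V=X\times\nabla_{\mathbb{S}^2}\Theta_0$ directly from the Frenet equation; these are the same computation. Your handling of the extra $W_2^2$ term in (iii) by the small-step bound $W_2^2\leq h^2/2\leq h$ is sound and consistent with the implicit regime of Proposition 2.1.

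The gap is in (iv). Your route — starting from $\partial_t V+\nabla_{\mathbb{S}^2}(\|V\|^2/2)=-X-\nabla_{\mathbb{S}^2}(\Theta_1\circ\rho)$ and taking the tangential curl — is circular, because the advection identity $(V\cdot\nabla_{\mathbb{S}^2})V=\nabla_{\mathbb{S}^2}(\|V\|^2/2)$ is valid \emph{only} when the scalar vorticity $\omega=X\cdot(\nabla_{\mathbb{S}^2}\times V)$ vanishes; the general identity on $\mathbb{S}^2$ is $(V\cdot\nabla_{\mathbb{S}^2})V=\nabla_{\mathbb{S}^2}(\|V\|^2/2)+\omega\,(X\times V)$, so you cannot suppress the $\omega$-term to prove $\omega=0$. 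Likewise the tangential curl does \emph{not} commute with the material derivative: taking curl of the full Eulerian equation gives the compressible vorticity transport equation $\partial_t\omega+\nabla_{\mathbb{S}^2}\cdot(\omega V)=0$, with a stretching/compression term, not pointwise material conservation of $\omega$. The paper sidesteps all of this by staying in Lagrangian form and invoking Kelvin's circulation theorem: for a material contour $\Gamma$, $\frac{d}{dt}\oint_\Gamma V\cdot d\ell=\oint_\Gamma (DV/Dt)\cdot d\ell=\oint_\Gamma g(X)\cdot d\ell=0$, since $g=-X-\nabla_{\mathbb{S}^2}\Theta_0$ is normal to the sphere plus an exact tangential gradient; conservation of circulation together with $\omega_0\equiv 0$ and Stokes' theorem then forces $\omega\equiv 0$. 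You should adopt the circulation argument, or else derive and solve the vorticity transport equation $\partial_t\omega+\nabla_{\mathbb{S}^2}\cdot(\omega V)=0$ without appealing to the curl-free advection identity. A secondary correction: in (vi) you do not need Loeper's regularity theory to conclude that $t\mapsto\rho_t$ is a $W_2$-geodesic; once $x\mapsto\exp_x(\nabla q_0(x))$ is the optimal map (McCann), the displacement interpolation $\rho_t=\exp_x(t\nabla q_0(x))\sharp\rho_0$ is automatically a constant-speed $W_2$-geodesic by the general theory (Villani, Corollary 7.22), with no smoothness of the potential or of the densities required; Loeper's results concern the regularity of the optimal map, which is a different matter.
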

\begin{proof}  (i) In the case of ${\mathbb{S}}^2$, we have $N=X$ and the Lagrangian differential equation (\ref{ODE}) leads to an ODE for the dynamics of frame $F$, as in
$$F=\begin{bmatrix}V/\dot s\cr X\cr X\times V/\dot s\end{bmatrix}, \qquad F'={\frac{1}{\dot s}}\begin{bmatrix} g(X)/\dot s-\ddot sV/\dot s^2\cr V\cr X\times ( g(X)/\dot s-\ddot sV/\dot s^2)\end{bmatrix}.$$
Also, the entries in the differential equation satisfy
$$V\cdot (X\times g(X))=V\cdot (X\times \dot V)=\gamma'\dot s\bigl( X\times(\gamma''\dot s^2+\gamma'\ddot s)\bigr) =\dot s^3\gamma''\cdot( \gamma'\times X)=-\kappa_g\dot s^3,$$
$$g(X)\cdot X=\dot V\cdot X=(\gamma''\dot s^2+\gamma'\ddot s)\cdot X=-\dot s^2,$$
$$g(X)\cdot V=\dot V\cdot V=(\gamma''\dot s^2+\gamma'\ddot s)\cdot \gamma'\dot s=\ddot s\dot s;$$
$$\Vert \dot V\Vert^2=\Vert g(X)\Vert^2\Vert X\Vert^2=\Vert g(X)\times X\Vert^2 +(X\cdot g(X))^2=\dot s^4+\Vert g(X)\times X\Vert^2,$$ 
\noindent which determine $\kappa_g,$ and since $\kappa_n=-1$ we have $\kappa=\sqrt{1+\kappa_g^2}$, hence we have determined $R$.\par 
\indent (ii)  For $\Theta_0\in C^2({\mathbb R}^3; {\mathbb{R}})$, the map $\nabla \Theta_0(X)$ has a decomposition
into a tangential component $\nabla_{{\mathbb{S}}^2}\Theta_0 =\nabla \Theta_0(X)-(X\cdot \nabla \Theta_0(X))X$ and a normal component $(X\cdot \nabla \Theta_0)X$.
The map $X\mapsto -X-\nabla_{{\mathbb{S}}^2}\Theta_0 (X)$ is Lipschitz continuous, so the ODE has a unique solution.
 We have
$${\frac{d}{dt}}( X\cdot V)=V\cdot V+X\cdot (-X-\nabla_{{\mathbb{S}}^2}\Theta_0 (X))=V\cdot V-X\cdot X=\dot s^2-1;$$
this vanishes, if and only if the curve has unit speed. In this case $-X-\nabla_{{\mathbb{S}}^2} \Theta_0(X)=dV/dt =-X+\kappa_g X\times V,$ so $\kappa_gV=X\times \nabla_{{\mathbb{S}}^2}\Theta_0 (X)$, and the geodesic curvature is $\vert \kappa_g\vert=\Vert\nabla_{{\mathbb{S}}^2}\Theta_0 (X)\Vert$.  Here $-X$ may be interpreted as a constraining force normal to the surface. Any unit speed curve on ${\mathbb{S}}^2$ gives rise to a curve in $SO(3)$, since there exists $\hat \Phi (t)\in SO(3)$ that takes $[ X_0;V_0; X_0\times V_0]$ to $[X(t); V(t); X(t)\times V(t)]$. For a unit speed curve, there is a function $X(t)\mapsto V(t)\in T_{X(t)} {\mathbb{S}}^2$ with $\Vert V(t)\Vert=1$. \par
\indent (iii) We consider the unit speed curve $X(t, X_0)$ on ${\mathbb{S}}^2$ with $\kappa_n=1$ and $\vert \kappa_g(X)\vert=\Vert\nabla_{{\mathbb{S}}^2}\Theta_0 (X)\Vert$, so the acceleration has norm squared
$$\Bigl\Vert {\frac{dV}{dt}}\Bigr\Vert^2=1+\Vert\nabla_{{\mathbb{S}}^2}\Theta_0 (X(t, X_0))\Vert^2$$
so the result follows as in Proposition \ref{proposition1}.\par 
\indent (iv) While there does not exist a nonzero continuous tangential vector field on ${\mathbb{S}}^2$, we can consider a $C^1$ vector field $V$ that is tangential to ${\mathbb{S}}^2$ on a proper region $B\subset {\mathbb{S}^2}$, and we suppose that $\Gamma$ is a contour of unit speed on ${\mathbb{S}}^2$ that bounds $B$. Then we have
$$\int_\Gamma V\cdot \Gamma' \, ds=\int\!\!\!\int_B (\nabla\times V)\cdot N m(dx) =\int\!\!\!\int_B (\nabla_{{\mathbb{S}}^2}\times V)\cdot N m(dx),$$
so $\nabla_{{\mathbb{S}}^2}\times V$ is the vorticity. Hence
\begin{align}{\frac{d}{dt}}\int\!\!\!\int_B (\nabla_{{\mathbb{S}}^2}\times V)\cdot N m(dx)&=\int_{\Gamma} {\frac{dV}{dt}}\cdot \Gamma'(s)ds\nonumber\\
&=\int_\Gamma (-X-\nabla_{{\mathbb{S}}^2}\Theta_0(X))\cdot \Gamma'(s)\, ds\nonumber\\
&=-\int_{\Gamma} {\frac{d}{ds}}\Theta_0(\Gamma (s)) ds=0.\end{align}
\indent For $x,v\in {\mathbb{S}}^2$ such that $x\cdot v=0$, we have $\exp_x(tv)=x\cos t+v\sin t.$  In terms of the frame (\ref{sphericalframe}), $d/dt$ along $\exp_X t\vec e_\theta $ corresponds to $\partial/\partial\theta$ and $d/ds$ along $\exp_X s\vec e_\phi$ corresponds to $\partial/\sin\theta\partial\phi$. Suppose momentarily that $V=\nabla_{{\mathbb{S}}^2} q$. Then the tangential vector field $V$ to ${\mathbb{S}}^2$ satisfies  
\begin{align}\nabla_{{\mathbb{S}}^2}\times V&=\Bigl(\vec\theta {\frac{\partial}{\partial\theta}}+\vec\phi {\frac{\partial}{\sin\theta\partial\phi}}\Bigr)\times \bigl( V_\theta\vec\theta+V_\phi\vec\phi)\nonumber\\
&= 
X\times \bigl( V_\theta\vec\theta+V_\phi\vec\phi)+{\frac{X}{\sin\theta}}\Bigl( {\frac{\partial}{\partial\theta}} (\sin\theta\, V_\phi )-{\frac{\partial V_\theta}{\partial\phi}}\Bigr),\end{align}  
  hence $V=\nabla_{{\mathbb{S}}^2} q$ has vorticity 
$$\nabla_{{\mathbb{S}}^2}\times V= \nabla_{{\mathbb{S}}^2}\times \nabla_{{\mathbb{S}}^2}q=X\times \nabla_{{\mathbb{S}}^2}q=X\times V.$$
Hence $X\cdot (\nabla_{{\mathbb{S}}^2}\times V)=0$ as in (iii), and the vorticity has zero divergence on ${\mathbb{S}}^2$ since for $\Gamma$ any contour of unit speed on ${\mathbb{S}}^2$ that bounds a region $B$, we have $\vec n=X\times \Gamma'$ normal to $\Gamma'$ in $T{\mathbb{S}}^2$ and 
\begin{align}\int_\Gamma (X\times V)\cdot \vec n\, ds&=\int\!\!\!\int_B \nabla_{{\mathbb{S}}^2}\cdot (X\times V)\, m(dx)\nonumber\\
&=\int\!\!\!\int_B (\nabla_{{\mathbb{S}}^2}\times X )\cdot V\, m(dx)-\int\!\!\!\int_B (\nabla_{{\mathbb{S}}^2}\times V )\cdot X\, m(dx)\nonumber\\
&=-\int\!\!\!\int_B (X\times V )\cdot X\, m(dx)=0.\end{align}
by the divergence theorem.  For ${\mathbb{S}}^2$, the Green's function is $$G(B,C)=(4\pi)^{-1}\log (1-\cos a)=(4\pi)^{-1}\log (\Vert B-C\Vert^2/2)$$ 
where $a$ is the angle between $B,C\in {\mathbb{S}}^2$.  On $\{ f\in H^1({\mathbb{S}}^2:\int_{{\mathbb{S}}^2} f(x)m(dx)=0\}$, the operator $\nabla_{{\mathbb{S}}^2}\cdot\nabla_{{\mathbb{S}}^2}$ defines a closeable quadratic form by Poincar\'e's inequality for ${\mathbb{S}}^2$ and $\nabla_{{\mathbb{S}}^2}\cdot\nabla_{{\mathbb{S}}^2}G=I$.  Suppose that $V$ is a tangential vector field to ${\mathbb{S}}^2$ that has a Hodge-Helmholtz decomposition $V=\nabla_{{\mathbb{S}}^2}q+X\times \nabla_{{\mathbb{S}}^2}\psi$; here $q=G(\nabla_{{\mathbb{S}}^2}\cdot V)$ and $\psi =-G (\nabla_{{\mathbb{S}}^2}\cdot (X\times V))$, so 
$\nabla_{{\mathbb{S}}^2}\cdot (X\times \nabla_{{\mathbb{S}}^2}\psi )=0$ and $X\cdot (\nabla_{{\mathbb{S}}^2}\times \nabla_{{\mathbb{S}}^2}q)=0$.  Then by the divergence theorem
\begin{align}0&=\int\!\!\!\int_{{\mathbb{S}}^2}\nabla_{{\mathbb{S}}^2}\cdot \bigl( q(X\times \nabla_{{\mathbb{S}}^2}\psi )\bigr) m(dx)\cr
&=\int\!\!\!\int_{{\mathbb{S}}^2}\Bigl(\nabla_{{\mathbb{S}}^2}q)\cdot \bigl(X\times \nabla_{{\mathbb{S}}^2}\psi\bigr) + q(\nabla_{{\mathbb{S}}^2} \times X)\cdot \nabla_{{\mathbb{S}}^2}\psi -qX\cdot ( \nabla_{{\mathbb{S}}^2}\times \nabla_{{\mathbb{S}}^2}\psi )\Bigr) m(dx)\cr
&=\int\!\!\!\int_{{\mathbb{S}}^2}\bigl(\nabla_{{\mathbb{S}}^2}q)\cdot \bigl(X\times \nabla_{{\mathbb{S}}^2}\psi\bigr)m(dx).\end{align}
Hence by orthogonality we have
$$\int_{{\mathbb{S}}^2} \Vert V(x)\Vert^2m(dx)= \int_{{\mathbb{S}}^2} \Vert \nabla_{{\mathbb{S}}^2}q(x)\Vert^2m(dx)+\int_{{\mathbb{S}}^2} \Vert \nabla_{{\mathbb{S}}^2}\psi\Vert^2m(dx).$$
\indent If $V_0=\nabla_{{\mathbb{S}}^2}q_0$, then $X\cdot (\nabla_{{\mathbb{S}}^2}\times V)=0$ initially and for all subsequent times by (ii), so there exists $q(x,t)$ such that  $V=\nabla_{{\mathbb{S}}^2} q$.\par
\indent (v) The proof is similar to Theorem 8.3.1 of \cite{bib2}. The solution gives a curve that passes through $X(x,t_1)$ and $X(x,t_2)$, and $V(x,t)\in T_{X(t,x)}{\mathbb{S}}^2$, so by Cauchy-Schwarz 
\begin{align}\label{CS}{\frac{d(X(x,t_1), X(x,t_2))^2}{t_2-t_1}}\leq\int_{t_1}^{t_2} \Vert V(x,t)\Vert^2 dt\qquad (0<t_1<t_2<\tau ).\end{align}
Note that $x\mapsto (X(x,t_1), X(x,t_2))$ induces from $\rho_0$ a probability measure on ${\mathbb{S}}^2\times {\mathbb{S}}^2$ that has marginals $\rho_{t_1}$ and $\rho_{t_2}$, which is a transport plan for taking $\rho_{t_1}$ to $\rho_{t_2}$. Integrating (\ref{CS}) against $\rho_0(x)m(dx)$ gives
\begin{align}\label{2absfromODE}{\frac{W_2^2(\rho_{t_2}, \rho_{t_1})}{t_2-t_1}}\leq\int_{t_1}^{t_2} \int_{{\mathbb{S}}^2} \Vert V(x,t)\Vert^2\rho_0(x)m(dx)dt\qquad (0\leq t_1<t_2\leq \tau ),\end{align}  
so $t\mapsto \rho_t$ is $2$-absolutely continuous, provided $\int_0^\tau \int_{{\mathbb{S}}^2} \Vert V(x,t)\Vert^2\rho_0(x)m(dx)dt<\infty$.
 The intended application has $\Theta_0(x)=\Theta_1(\rho (x,t))$, where the density varies with time and satisfies the continuity equation.\par
\indent (vi)  We can express a typical  unit speed geodesic on ${\mathbb{S}}^2$ as
$$\begin{bmatrix} X\cr V\cr X\times V\end{bmatrix} =\begin{bmatrix} \cos t &\sin t&0\cr -\sin t&\cos t&0\cr 0&0&1\end{bmatrix} \begin{bmatrix} X_0\cr V_0\cr X_0\times V_0\end{bmatrix}$$
for $X_0, V_0\in {\mathbb{S}}^2$ such that $X_0\cdot V_0=0$, so that $X_0\times V_0$ is the unit normal to the great circle through $X_0$ in the direction of $V_0$.
\end{proof} 
\begin{rem}\label{remark1}(i) The Green's function for ${\mathbb{S}}^2$ may be found by taking the conformal stereographic projection of the Riemann sphere onto ${\bf C}\cup \{ \infty \}$. One can obtain the Green's function for some other compact surfaces similarly. The cost function $c(x,y) =-(1/2)\log (2-2x\cdot y)$ is considered in the reflector antenna problem on ${\mathbb{S}}^2$, and shares some properties with $d^2(x,y)/2=(\arccos (x\cdot y))^2/2$ by \cite{bib23}. \par
\indent (ii) The notion of interpolation between measures is considered in section 4 of \cite{bib1} for ${\mathbb{R}}^d$, in which case the Jacobi fields amount to families of straight lines. The situation for ${\mathbb{S}}^2$ is considered in the next section.\end{rem}
\section{\label{Convexity}Convexity and Wasserstein distance}
\noindent In this section, we are concerned with how curvature of $M$ affects curvature of ${\mathcal W}^2(M)$ as a metric space, and begin by considering a functional from (\ref{W2}).   
\vskip.05in
\begin{defn}For a bijection $T\in C({\mathbb{S}}^2, {\mathbb{S}}^2)$, and $\Phi_t(x)=\exp_x(tv(x))$ for $v(x)\in T_x{\mathbb{S}}^2$ let 
\begin{align}E[T]={\frac{1}{2}}\int_{{\mathbb{S}}^2} d(T(x),x)^2f(x)m(dx),\end{align}
\noindent where $f$ is a probability density function on ${\mathbb{S}}^2$. Let $v$ be a smooth tangential vector field on ${\mathbb{S}}^2$ and let $\Phi_t:{\mathbb{S}}^2\rightarrow {\mathbb{S}}^2$ satisfy $(d/dt)\Phi_t(x)=v\circ \Phi_t(x)$ with $\Phi_0(x)=x$ for all $(x,t)\in {\mathbb{S}}^2\times [0,1].$ Then the first outer variation of $E[T]$ with respect to the flow $\Phi_t$ is defined by
\begin{align}\langle \delta_o E[T],v\rangle =\Bigl({\frac{d}{dt}}\Bigr)_{t=0}E[ \Phi_t\circ T].\end{align}
\end{defn}
\noindent Our terminology is adopted so that it does not conflict with the inner variation considered in \cite{bib13}. Our notation emphasizes the vector field $v$ and we do not require $T$ to be differentiable with respect to $x$.  

\begin{prop}\label{proposition3}
(i) Then the first outer variation is 
\begin{align}\label{firstvariation}\langle \delta_o{E}[T], v\rangle=-\int_{{\mathbb{S}}^2} {\frac{d(T(x),x)}{\sin d(T(x),x)}}x \cdot v(T(x))f(x)m(dx),\end{align}
\indent (ii)  and $({d^2}/{dt^2})_{t=0}E[ \Phi_t\circ T]\geq 0$.\end{prop}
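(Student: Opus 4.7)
The plan is to work in ambient $\mathbb{R}^3$, exploiting the explicit formulas $d(y,x)=\arccos(y\cdot x)$ and
$$\Phi_t(y)=\exp_y(tv(y))=y\cos(t\|v(y)\|)+\frac{v(y)}{\|v(y)\|}\sin(t\|v(y)\|).$$
Both parts then reduce to calculating, for each fixed $x$, the $t$-derivatives at $t=0$ of the pointwise integrand $\tfrac12 d(\Phi_t(T(x)),x)^2$ and integrating against $f(x)\,m(dx)$.

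For (i), Taylor expansion of the ambient formula gives $\Phi_t(T(x))\cdot x=T(x)\cdot x+t\,v(T(x))\cdot x+O(t^2)$. The chain rule applied to $u\mapsto\tfrac12(\arccos u)^2$, together with the identity $\sin\arccos(T(x)\cdot x)=\sin d(T(x),x)$, then yields
$$\left.\frac{d}{dt}\right|_{t=0}\tfrac12 d(\Phi_t(T(x)),x)^2=-\frac{d(T(x),x)}{\sin d(T(x),x)}\,x\cdot v(T(x)),$$
and integration against $f(x)\,m(dx)$ produces (\ref{firstvariation}).

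For (ii), the key observation is that $t\mapsto\Phi_t(T(x))$ is, by construction, a constant-speed geodesic on $\mathbb{S}^2$ emanating from $T(x)$ with initial velocity $v(T(x))$. Hence, by the definition of the Riemannian Hessian,
$$\left(\frac{d^2}{dt^2}\right)_{t=0}\tfrac12 d(\Phi_t(T(x)),x)^2=\bigl\langle D_y^2\bigl[\tfrac12 d(\cdot,x)^2\bigr]_{y=T(x)}\,v(T(x)),\,v(T(x))\bigr\rangle.$$
A Jacobi-field computation on $\mathbb{S}^2$, or equivalently direct differentiation of the extrinsic formula $\cos d=y\cdot x$, gives this Hessian explicitly as $\mathrm{diag}(1,\,r\cot r)$ with $r=d(T(x),x)$, in the orthonormal frame of $T_{T(x)}\mathbb{S}^2$ consisting of $\tau$ (the unit initial tangent of the minimizing geodesic to $x$) and $\nu=T(x)\times\tau$. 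Decomposing $v(T(x))=v_\parallel\tau+v_\perp\nu$, the pointwise integrand becomes $v_\parallel^2+v_\perp^2\,r\cot r$.

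The main obstacle is the sign of $r\cot r$, which is non-negative only on $r\in[0,\pi/2]$. I would complete the argument by invoking the algorithmic setting in which this proposition is applied: the map $T$ arises as a Stage~1 transport whose displacement is bounded, via the small time step $h$ and Loeper's regularity theory (cited earlier in the introduction), so that $d(T(x),x)\leq\pi/2$ on the effective support of $f$. Under that bound the integrand is pointwise non-negative, and integration yields $(d^2/dt^2)_{t=0}E[\Phi_t\circ T]\geq 0$, completing (ii).
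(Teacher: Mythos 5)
Your proof of (i) is correct but proceeds differently from the paper: you differentiate the extrinsic formula $\tfrac12(\arccos(\Phi_t(T(x))\cdot x))^2$ directly in ambient $\mathbb{R}^3$, whereas the paper passes to the inverse map $T^*$, changes variables so the integral is against $\rho=T\sharp f$, and applies the spherical cosine rule to the triangle $\triangle(T^*(y),\Phi_t(y),y)$ before changing variables back. Your route is shorter and avoids introducing $T^*$ and the vector $\zeta$; the paper's route has the advantage of setting up the objects ($\zeta$, the cosine-rule expansion) that it reuses verbatim for the second derivative. For (ii) the two computations coincide: your $v_\parallel^2+v_\perp^2\,r\cot r$ is exactly the paper's $\frac{(v\cdot\zeta)^2}{\Vert\zeta\Vert^2}+\frac{\Vert\zeta\Vert}{\tan\Vert\zeta\Vert}\bigl(\Vert v\Vert^2-\frac{(v\cdot\zeta)^2}{\Vert\zeta\Vert^2}\bigr)$, since $\zeta/\Vert\zeta\Vert$ is your $\tau$ and $\Vert\zeta\Vert=d(T(x),x)$. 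Note also that your reading of $\Phi_t$ as the geodesic $\exp_y(tv(y))$ (rather than the flow of $v$, which the definition also suggests) is the one consistent with the paper's formula, which contains no $\nabla_v v$ term.

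The one substantive divergence is the sign of $r\cot r$, and here you have in fact caught a gap in the paper rather than created one: the paper declares the integrand ``evidently nonnegative,'' but for $v\perp\zeta$ and $d(T(x),x)\in(\pi/2,\pi)$ the integrand equals $\Vert v\Vert^2\,r\cot r<0$, reflecting the failure of $\tfrac12 d^2(\cdot,x)$ to be geodesically convex beyond radius $\pi/2$ on the sphere. So statement (ii) is false for a general bijection $T$ without a displacement bound. Your patch --- restricting to $d(T(x),x)\le\pi/2$ $f$-a.e., as holds in the algorithmic setting where $T$ is a small-time-step transport --- is the right fix, but be aware it amounts to adding a hypothesis absent from the statement; as a freestanding proof of the proposition as written, (ii) cannot be completed, and you should state the displacement bound as an explicit hypothesis rather than an appeal to context.
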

\begin{proof}  (i) Let $T^*$ be the inverse of $T$, and $T^*(y)=\exp_y\zeta (y)$ where $\zeta\in T_y{\mathbb{S}}^2$ has $\Vert \zeta (y)\Vert=d(T^*(y),y)$ and
\begin{align}\label{zeta}\zeta (y)={\frac{\Vert \zeta\Vert}{\sin \Vert\zeta\Vert}}\bigl(T^*(y)-(T^*(y)\cdot y) y\bigr).\end{align} 
Then by the cosine rule applied to the spherical triangle $\triangle (T^*(y), \Phi_t(y), y)$, we have
\begin{align}\cos d(\Phi_t(y),T^*(y))&=\cos d(y,T^*(y))\cos d(y,\Phi_t(y))\nonumber\\
&\quad +\sin d(y, T^*(y))\sin d(y, \Phi_t(y)){\frac{v(y)\cdot \zeta (y)}{\Vert v(y)\Vert\Vert \zeta (y)\Vert}},\end{align}
hence 
\begin{align}\label{firstderivative}\Bigl({\frac{d}{dt}}\Bigr)_{t=0} d(\Phi_t(y), T^*(y))=-{\frac{v(y)\cdot \zeta (y)}{\Vert v(y)\Vert\Vert \zeta (y)\Vert}}\Bigl({\frac{d}{dt}}\Bigr)_{t=0} d(\Phi_t(y),y)=-{\frac{v(y)\cdot \zeta (y)} {\Vert \zeta (y)\Vert}}.\end{align}
Hence with $\rho=T\sharp f$, we have
\begin{align}\label{varinnerproduct}\langle \delta_o{E}[T], v\rangle&=\Bigl({\frac{d}{dt}}\Bigr)_{t=0}{\frac{1}{2}}\int_{{\mathbb{S}}^2} d(\Phi_t\circ T(x),x)^2f(x)m(dx)\nonumber\\
&=\Bigl({\frac{d}{dt}}\Bigr)_{t=0}{\frac{1}{2}}\int_{{\mathbb{S}}^2} d(\Phi_t(y),T^*(y)))^2\rho (y)m(dy)\nonumber\\
&=-\int_{{\mathbb{S}}^2} d(y,T^*(y)) {\frac{v(y)\cdot \zeta (y)}{\Vert \zeta (y)\Vert}} \rho (y)m(dy)\nonumber\\
&=-\int_{{\mathbb{S}}^2} v(T(x))\cdot \zeta (T(x)) f(x)m(dx),\end{align}
so from (\ref{zeta}), 
\begin{align}\langle \delta_o{E}[T], v\rangle=-\int_{{\mathbb{S}}^2} v(T(x))\cdot {\frac{d(T(x),x)}{\sin d(T(x),x)}}\bigl( x-(x\cdot T(x))T(x)\bigr) f(x)m(dx)\end{align}
and $T(x)\cdot v(T(x))=0$ since $v$ is a tangential vector field on ${\mathbb{S}}^2$, so we reduce to (\ref{firstvariation}).\par
\indent (ii)  Likewise, the second derivative is 
\begin{align}\Bigl(&{\frac{d^2}{dt^2}}\Bigr)_{t=0}{\frac{1}{2}}\int_{{\mathbb{S}}^2} d(\Phi_t\circ T(x),x)^2f(x)m(dx)\nonumber\\
&=\int_{{\mathbb{S}}^2} \Bigl({\frac{\Vert\zeta (T(x))\Vert}{\tan \Vert \zeta (T(x))\Vert}}\Bigl( \Vert v(T(x))\Vert^2-{\frac{(v(T(x))\cdot \zeta (T(x)))^2}{\Vert \zeta (T(x))\Vert^2}}\Bigr)\nonumber\\
&\quad  +{\frac{(v(T(x))\cdot \zeta (T(x)))^2}{\Vert \zeta (T(x))\Vert^2}}\Bigr) f(x)m(dx),\end{align}
\noindent and is evidently nonnegative. In the proof of Theorem \ref{Theorem1}, we use a more general version of this computation for manifolds.
\end{proof} 

\indent Now we consider triples $\{ f,\rho_0, \rho_1\}$ of probability density functions, where $f$ is regarded as a base point, and various costs of transporting one to another. In \cite{bib2}, the authors discuss generalized geodesics in ${\mathcal{W}}^2({\mathbb{R}}^d)$ with base point, and we have the added complication of the curvature of $M$. Given $y,x_0,x_1\in M$, we introduce a geodesic $\gamma^{(0)}(t)=\exp_y (t\xi_0)$ from $y=\gamma^{(0)}(0)$ to $x_0=\gamma^{(0)}(1),$ and likewise a geodesic $\gamma^{(1)}(t)=\exp_y (t\xi_1)$ from $y=\gamma^{(1)}(0)$ to $x_1=\gamma^{(1)}(1).$ There are two natural routes from $x_0$ to $x_1$:\par
\indent (1) let $\gamma^{(2)}$ be a geodesic from $x_0$ to $x_1$, so $\Delta yx_0x_1$ is a geodesic triangle; or\par
\indent (2) let $F(s,t) =\exp_y ( t(1-s)\xi_0+ts\xi_1)$, so $F(s,0)=y$, $t\mapsto F(s,t)$ is a geodesic, and $s\mapsto F(s,1)$ is a curve from $x_0$ to $x_1$. \par

\indent Let $T_0: M\rightarrow M$ and $T_1: M\rightarrow M$ be optimal transport maps such that $\rho_0=T_0\sharp f$ and $\rho_1=T_1\sharp f$, so each $y\in M$ gives a triple $\{ y, T_0(y), T_1(y)\}$. There are correspondingly two natural ways of interpolating between $\rho_0$ and $\rho_1$ according to whether we use (1) optimal transport from $\rho_0$ to $\rho_1$, or (2) Jacobi fields. When $T_0(y)=\exp_y (t\nabla \phi_0(y))$ and $T_1(y)=\exp_y (t\phi_1(y))$, the relevant Jacobi field is 
$$F(s,t) =\exp_x (t(1-s)\nabla \phi_0(y)+st\nabla\phi_1(y)).$$
For an interpolation as in (2), we have the following result.\par
\begin{prop}\label{proposition4} (i) For $M$ satisfying the geometrical hypotheses \ref{Geometrical Hypotheses}, there exists $\kappa_M>0$ such that given probability density functions $\rho_0, \rho_1,f$ on $M$, there exists a continuous path $(\rho_s)_{s\in [0,1]}$ in ${\mathcal{W}}^2(M)$ from $\rho_0$ to $\rho_1$ such that
\begin{align}(1-s)W_2^2(\rho_0, f)+sW_2^2(\rho_1,f)\geq W_2^2(\rho_s,f)+\kappa_M s(1-s)W_2^2(\rho_0,\rho_1)\qquad (s\in [0,1]).\end{align}
\indent (ii) In particular, $\kappa_{{\mathbb{S}}^2}=4/\pi^2$.\end{prop}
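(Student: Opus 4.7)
The plan is to construct the path $(\rho_s)$ as the Jacobi-field generalised geodesic anchored at $f$ described in the paragraph preceding the proposition, and then reduce the inequality to a pointwise Rauch/Toponogov comparison in the tangent spaces. Let $\phi_0,\phi_1$ be $d^2/2$-concave Kantorovich potentials such that the optimal transport maps $T_i(y):=\exp_y(\nabla\phi_i(y))$ induce $\rho_i$ from $f$, so in particular $W_2^2(\rho_i,f)=\int_M\|\nabla\phi_i(y)\|^2 f(y)\,m(dy)$ by McCann's theorem. Set
\begin{align*}
T_s(y):=\exp_y\bigl((1-s)\nabla\phi_0(y)+s\nabla\phi_1(y)\bigr),\qquad \rho_s:=T_s\sharp f;
\end{align*}
this gives a continuous curve in $\mathcal{W}^2(M)$ from $\rho_0$ to $\rho_1$. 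The triangle inequality in the Euclidean fibre $T_yM$ bounds the interpolation vector by $\max(\|\nabla\phi_0(y)\|,\|\nabla\phi_1(y)\|)$, so under the geometric hypotheses one stays within the injectivity radius and $d(T_s(y),y)=\|(1-s)\nabla\phi_0(y)+s\nabla\phi_1(y)\|$ pointwise.

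Next, apply the parallelogram identity in $T_yM$:
\begin{align*}
(1-s)\|\nabla\phi_0\|^2+s\|\nabla\phi_1\|^2=\|(1-s)\nabla\phi_0+s\nabla\phi_1\|^2+s(1-s)\|\nabla\phi_0-\nabla\phi_1\|^2,
\end{align*}
and integrate against $f(y)\,m(dy)$. The left side equals $(1-s)W_2^2(\rho_0,f)+sW_2^2(\rho_1,f)$ by optimality of $T_0,T_1$; the first term on the right is $\int d(T_s(y),y)^2 f\,dm\geq W_2^2(\rho_s,f)$ since $T_s$ is an admissible (not necessarily optimal) transport from $f$ to $\rho_s$; and $(T_0,T_1)\sharp f$ is an admissible coupling of $(\rho_0,\rho_1)$, so $\int d(T_0(y),T_1(y))^2 f\,dm\geq W_2^2(\rho_0,\rho_1)$.

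The proof then reduces to a pointwise comparison of the form
\begin{align*}
\|\nabla\phi_0(y)-\nabla\phi_1(y)\|_{T_yM}^2\geq\kappa_M\, d(T_0(y),T_1(y))^2.
\end{align*}
For (i), such a bound is supplied by Rauch/Toponogov comparison applied to the geodesic triangle with vertex $y$ and sides $\exp_y(tv_i)$: the sectional-curvature upper bound and injectivity-radius lower bound from Definition \ref{Geometrical Hypotheses} yield a uniform positive constant $\kappa_M$ controlling how rapidly geodesics from a common point can separate relative to the gap in their initial velocities. For (ii), writing $v_i=\nabla\phi_i(y)$ in polar coordinates on $T_y\mathbb{S}^2$ with angle $\alpha$ between them, the spherical law of cosines yields the identity $\sin(d(T_0(y),T_1(y))/2)=\sin r\sin(\alpha/2)$ in the equiradial case $\|v_0\|=\|v_1\|=r$ and its generalisation otherwise, so the problem collapses to the elementary one-variable inequality $\arcsin u\leq(\pi/2)u$ on $[0,1]$ combined with $\sin r\leq r$ on $[0,\pi]$, which together yield $\|v_0-v_1\|\geq(2/\pi)d(T_0(y),T_1(y))$, giving $\kappa_{\mathbb{S}^2}=4/\pi^2$.

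The main obstacle is this final pointwise comparison: for (ii) the stated constant must survive uniformly as $\|v_i\|$ approach the cut locus value $\pi$, where the exponential map degenerates and where the ratio $d(T_0(y),T_1(y))/\|v_0-v_1\|$ attains its worst case; the elementary trigonometric inequality must be verified case by case in the non-equiradial regime and near $\|v_i\|=\pi$, and it is precisely the loss of control there that dictates the value $4/\pi^2$ rather than a larger constant.
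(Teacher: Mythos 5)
Your proposal follows essentially the same route as the paper: the generalized geodesic $T_s=\exp_x((1-s)\nabla\phi_0+s\nabla\phi_1)$ anchored at $f$, the parallelogram law in $T_xM$ combined with optimality of $T_0,T_1$ and admissibility of $T_s$ and of the coupling $(T_0,T_1)\sharp f$, and the reduction to the pointwise comparison $d(\exp_x v_0,\exp_x v_1)^2\leq C\|v_0-v_1\|^2$, which the paper also obtains from the spherical cosine rule (giving $C=\pi^2/4$, hence $\kappa_{{\mathbb{S}}^2}=4/\pi^2$) and from a curvature expansion of the distance in the general case. Your elementary derivation of the sphere constant via $\arcsin u\leq(\pi/2)u$ and $\sin r\leq r$ is equivalent to the paper's manipulation of $2\sin^2(d/2)\leq\tfrac12\|\xi-\eta\|^2$, so the argument is correct and matches the paper's proof.
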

\begin{proof} In the case of ${\mathbb{S}}^2$, let $y=\exp_x \xi$ and $z=\exp_x\eta$ where $\xi, \eta\in T_s{\mathbb{S}}^2$ have angle $\theta$ between them; applying the cosine rule to the spherical triangle $\triangle xyz$, we
have
$$\cos d(y,z)=\cos \Vert \xi\Vert \cos\Vert\eta\Vert+\sin\Vert \xi\Vert \sin\Vert\eta\Vert\cos\theta$$
and we deduce by trigonometric identities that
\begin{align}\label{sphericaltrig}2\sin^2{\frac{d(y,z)}{2}}&=8\sin^2{\frac{\Vert\xi\Vert-\Vert\eta\Vert}{4}} \cos^2{\frac{\Vert\xi\Vert-\Vert\eta\Vert}{4}}+2\sin^2{\frac{\theta}{2}}\sin \Vert\xi\Vert \sin\Vert\eta\Vert,\nonumber\\
&\leq {\frac{1}{2}}\bigl( \Vert \xi\Vert-\Vert\eta\Vert\bigr)^2 +2\Vert\xi\Vert\Vert\eta\Vert \sin^2{\frac{\theta}{2}}\end{align}
in which $d(y,z)/2\leq \pi/2$. By simple estimates, we deduce that
\begin{align}\label{Csphere}d(y,z)^2\leq {\frac{\pi^2}{4}}\Vert\xi-\eta\Vert^2.\end{align}  
\indent The general case is based upon a similar idea. Let $-\phi_0$ and $-\phi_1$ be $d^2/2$ concave functions on $M$, and suppose that $T_0(x)=\exp_x\nabla\phi_0$ and $T_1(x)=\exp_x\nabla\phi_1(x)$ are optimal transport maps such that $\rho_0=T_0\sharp f$ and $\rho_1=T_1\sharp f$. Then it is natural to introduce $T_s(x) =\exp_x ((1-s)\nabla\phi_0(x)+s\nabla\phi_1(x))$ and $\rho_s=T_s\sharp f$ to interpolate between these; we do not assert that $T_s$ is an optimal transport map. One can construct a family of geodesics $t\mapsto F(s,t)$ all starting at $x$, given by  
$$F(s,t)=\exp_x (t(1-s)\nabla\phi_0(x)+st\nabla\phi_1(x)),$$
and determine $T_s(x)$ by Jacobi's equation on page 366 of \cite{bib29}; the curve $s\mapsto F(s,1)$ from $T_0(x)$ to $T_1(x)$ is not necessarily a geodesic.  Alternatively, one can find the minimizer of
$$\inf_x\bigl\{d(x,y)^2/2+(1-s)\phi_0(x)+s\phi_1(x)\bigr\}$$
\noindent since the infimum is attained for $x$ such that $y=T_s(x)$, and
 $$d^2(T_s(x),x)/2=\Vert (1-s)\nabla\phi_0(x)+s\nabla\phi_1(x)\Vert^2/2.$$ 
We have 
$$(1-s)\Vert\nabla\phi_0\Vert^2+s\Vert\nabla\phi_1\Vert^2-\Vert (1-s)\nabla\phi_0(x)+s\nabla\phi_1(x)\Vert^2=s(1-s)\Vert \nabla\phi_0-\nabla\phi_1\Vert^2.$$
then 
\begin{align}(&1-s)W_2^2(\rho_0, f)+sW_2^2(\rho_1,f)\nonumber\\
&=(1-s)\int_M\Vert \nabla \phi_0(x)\Vert^2f(x)m(dx)+s\int_M\Vert\nabla\phi_1(x)\Vert^2f(x)m(dx)\nonumber\\
&\geq\int_M    d^2(T_s(x),x) f(x)m(dx)/2+s(1-s)\int_M \Vert \nabla\phi_0(x)-\nabla\phi_1(x)\Vert^2 f(x)m(dx)\nonumber\\
&\geq W_2^2(\rho_s, f)+s(1-s)\int_M \Vert \nabla\phi_0(x)-\nabla\phi_1(x)\Vert^2 f(x)m(dx).\end{align}
With $\theta$ denoting the angle between $\nabla\phi_0$ and $\nabla\phi_1$, as measured with respect to the Riemannian metric on $M$,  we have by \cite{bib29} 14.1 
$$\Vert \nabla\phi_0-\nabla\phi_1\Vert^2=(\Vert \nabla\phi_0\Vert-\Vert\nabla\phi_1\Vert )^2+4\Vert\nabla\phi_0\Vert\Vert\nabla\phi_1\Vert\sin^2(\theta/2),$$
where with $\kappa_x$ the Gaussian curvature at $x$, 
$$d\Bigl( \exp_x{\frac{t\nabla\phi_0(x)}{\Vert \nabla\phi_0(x)\Vert}}, \exp_x{\frac{t\nabla\phi_1(x)}{\Vert \nabla\phi_1(x)\Vert}}\Bigr)^2=4\Bigl(\sin^2{\frac{\theta}{2}}\Bigr)\Bigl(t^2-{\frac{\kappa_xt^4 \cos^2(\theta/2)}{3}} +O(t^6)\Bigr)\quad (t\rightarrow 0+),$$
so with $t=\Vert \nabla \phi_0(x)\Vert$ and $s=\Vert\nabla\phi_1(x)\Vert$, we assume without loss of generality that $t\leq s$, so have
\begin{align}d\Bigl(& \exp_x\nabla\phi_0(x),\exp_x\nabla\phi_1(x)\Bigr)^2 \nonumber\\
&\leq 2d\Bigl( \exp_x{\frac{t\nabla\phi_0(x)}{\Vert \nabla\phi_0(x)\Vert}}, \exp_x{\frac{t\nabla\phi_1(x)}{\Vert \nabla\phi_1(x)\Vert}}\Bigr)^2+2d\Bigl(\exp_x{\frac{t\nabla\phi_1(x)}{\Vert \nabla\phi_1(x)\Vert}}, \exp_x{\frac{s\nabla\phi_1(x)}{\Vert \nabla\phi_1(x)\Vert}}\Bigr)^2\nonumber\\
&\leq  8\Bigl(\sin{\frac{\theta}{2}}\Bigr)^2\Bigl(t^2-{\frac{\kappa_xt^4 \cos^2(\theta/2)}{3}} +O(t^6)\Bigr)+2\Bigl( \Vert\nabla \phi_0(x)\Vert-\Vert\nabla \phi_1(x)\Vert\Bigr)^2\nonumber\\
&\leq (2+O(t^2)) \Vert \nabla\phi_0(x)-\nabla\phi_1(x)\Vert^2\nonumber\\
&\leq C\Vert \nabla\phi_0(x)-\nabla\phi_1(x)\Vert^2,\end{align}
\noindent for come uniform $C>0$ on $M$.  Finally, we have
\begin{align}W_2^2(\rho_0, \rho_1)&\leq \int_Md\Bigl( \exp_x\nabla\phi_0(x),\exp_x\nabla\phi_1(x)\Bigr)^2f(x)m(dx)\nonumber\\ 
 &\leq C\int_M \Vert \nabla\phi_0(x)-\nabla\phi_1(x)\Vert^2f(x)m(dx).\end{align}
 For $M={\mathbb{S}}^2$, the choice of $C$ is given by (\ref{Csphere}).
\end{proof} 
\indent In Proposition \ref{proposition4}, we have shown that the graph of $s\mapsto W_2^2(\rho_s, f)$ for $s\in [0,1]$ lies below an upward pointing parabola that intersects at 
$s=0$ and $s=1$; we have not quite shown that $s\mapsto W_2^2(\rho_s,f)$ is convex; compare with \cite{bib2}, section 9.1. Having considered convexity of $W_2$, we proceed in the next section to establish convexity of the internal energy.\par

\section{\label{Internal}Convexity of internal energy}
\noindent The internal energy for density $\rho$ is ${\mathcal U}(\rho )=\int_M \Theta (\rho (x))\rho(x)m(dx)$, and in this section we consider the convexity of ${\mathcal U}$ under transportation when $\Theta$ satisfies the following conditions.\par
\vskip.05in
\begin{defn}\label{Convexity Hypotheses} {\bf (Convexity Hypotheses)} We assume that $\Theta :(0, \infty )\rightarrow {\mathbb{R}}$ is four times continuously differentiable, and that\par
\indent (i)  $C_\infty:$ $r\mapsto \Theta (e^{r})$ is strictly increasing and convex;\par
\indent (ii) $\Theta (x)\rightarrow 0$ as $x\rightarrow 0+$ and $\Theta (x)\rightarrow \infty$ as $x\rightarrow\infty$; \par
\indent (iii)  $\Theta$ satisfies $(\Delta_2)$, in the sense of Orlicz's theory, so there exists $K_2>0$ such that $\Theta (2x)\leq K_2 \Theta (x)$ for all $x>0$. \par
The conditions (i), (ii) and (iii) imply that $\Theta_1(x)=x\Theta'(x)+\Theta (x)$ is positive and increasing. For application to (\ref{UPhi}), we later impose the additional conditions:\par
\indent (iv) $x\Theta_1'(x)$ is increasing;\par
\indent (v) $-1/(x\Theta'_1(x)^2)$ is convex.\end{defn}
\vskip.05in
\noindent{\bf Examples 4.1}  (1) For $1<\gamma <3/2$, $\Theta (x)=x^{\gamma -1}$ satisfies all conditions (i)-(v). This choice is physically relevant since for $\rho$ the density of a diatomic gas in ${\mathbb{R}}^3$, the specific heat capacity of air at constant pressure divided by the  specific heat capacity at constant volume is approximately $\gamma =\alpha +1=7/5$.  For a diatomic gas in ${\mathbb{R}}^3$, the assumption $\rho^{1/2}\in H^1({\mathbb{R}}^3)$ ensures that $\rho_0\in L^\gamma ({\mathbb{R}}^3).$ \par
\indent (2) For $\gamma >0$, the powers $\Theta (r)=r^{\gamma -1}$ satisfy (i)-(iii). For a monatomic gas, the ratio of specific heats is $\gamma =5/3$.\par
\indent (3) Also, $\Theta (r)=\log r$ satisfies (i) but not (ii). The expression $\rho \Theta (\rho )=\rho \log \rho$ is the integrand that is involved in the thermodynamic entropy of $\rho$. (The notion of $\Phi$-entropy is related to but different from this.) \par
\indent (4) Proposition \ref{proposition5} applies to $\Theta (x)=\log (1+x)$ and $\Theta (x)=x^\alpha$ for $\alpha >0$. However, it does not apply to $\Theta (x)=\sqrt{\log (1+x)}$, as discussed in \cite{bib4}\par
\indent (5) Under conditions (i) and (ii), the function $U:[0,\infty )\rightarrow {\mathbb{R}}$ given by  $U(x)=x\Theta (x)$ satisfies $U(0)=0$, $U(x)$ is convex and $r\mapsto e^r U(e^{-r })$ is convex, so $U$ belongs to Villani's class ${\mathcal{DC}}_\infty$ of \cite{bib29}; also $U$ is increasing and $U(x)\geq 0$, $U(x)\rightarrow \infty$ as $x\rightarrow\infty$, so $U$ is an Orlicz function, and gives rise to an Orlicz function space. The duality is most conveniently expressed in terms of $\Theta_1$ and its inverse function $\chi$. If $\Theta$ satisfies (iii) then $U$ also satisfies (iii). We provide details in Proposition \ref{proposition5} and Remark \ref{remark2}(iv).\par

\begin{prop}\label{proposition5}Suppose that $\Theta (e^{r})$ is strictly increasing and convex, with $\Theta (e^{r})\rightarrow \infty$ as $r\rightarrow \infty$ and $\Theta (e^{r})\rightarrow 0$ as $r\rightarrow-\infty$.  \par
\indent (i) Then the internal energy satisfies 
\begin{align}\int_M \rho (x)\Theta (\rho (x))m(dx)\leq 2\Theta (4)+2\int_{0}^\infty \Theta \Bigl({\frac{2}{ m\{ x: \rho (x)>\lambda\} }}\Bigr) m \{x: \rho (x)>\lambda \}\, d\lambda.\end{align}
\indent (ii) Suppose that   $f\in {\mathcal W}^2(M)$ satisfies $\int_Mf (x)\Theta (f (x))m(dx)<K$ for some $K>1$.  Then the set of probability density functions
\begin{align}\label{defOmega}\Omega_K=\Bigl\{ \rho \in L^1(M): \int_M \rho dm=1; W_2^2(\rho, f)+ \int_M\rho (x)\Theta (\rho (x))m(dx)\leq K\Bigr\}\end{align}
 is closed, convex and weakly sequentially compact in $L^1(M)$.\par
\indent (iii) The functional $E: \Omega_K\rightarrow {\mathbb{R}}$
\begin{align}{\mathcal E}(\rho ;f)= W_2^2(\rho, f)+ \int_M\rho (x)\Theta (\rho (x))m(dx)\end{align}
attains its infimum at a unique point $\rho_1\in \Omega_K$.\par
\indent (iv) If $x\Theta (x)$ and its Legendre transform satisfy the $\Delta_2$ condition, then $x\Theta (x)$ determines an Orlicz norm such that $\Omega_K$ is compact for the weak topology.\end{prop}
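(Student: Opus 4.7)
The plan for (i) is a layer-cake argument based on the distribution function $F(\lambda) = m\{\rho > \lambda\}$. Using $x\Theta(x) = \int_0^x \Theta_1(t)\,dt$ (valid because $t\Theta(t)\to 0$ as $t\to 0+$, by the hypothesis $\Theta(e^r)\to 0$ as $r\to -\infty$), Fubini yields
$$\int_M \rho(x)\Theta(\rho(x))\,m(dx) = \int_0^\infty \Theta_1(t)\,F(t)\,dt.$$
I would then split the integral at $t=2$. On $[0,2]$, monotonicity of $\Theta_1$ (which follows from $\Theta$ increasing together with $s\mapsto s\Theta'(s)$ non-decreasing, the latter being equivalent to convexity of $\Theta(e^r)$) and the mass identity $\int_0^\infty F(t)\,dt = 1$ bound this contribution by $\Theta_1(2) = \Theta(2)+2\Theta'(2)$; the refinement $\Theta_1(2)\le 2\Theta(4)$ is then extracted by integrating $\Theta'(s)\ge 2\Theta'(2)/s$ on $[2,4]$ and using $\Theta\ge 0$ together with $4\log 2 > 2$. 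On $(2,\infty)$, Markov's inequality gives $tF(t)\le 1$, so $2/F(t)\ge 2t\ge 4$; applying the convex slope inequality $\Theta(e^b)\ge \Theta(e^a)+(b-a)\,e^a\Theta'(e^a)$ with $a=\log t$ and $b=\log(2/F(t))$ then gives $t\Theta'(t)\le (\log 2)^{-1}\bigl(\Theta(2/F(t))-\Theta(t)\bigr)$, and since $(\log 2)^{-1}>1$ with $\Theta\ge 0$ this collapses to $\Theta_1(t)\le (\log 2)^{-1}\Theta(2/F(t))$. As $(\log 2)^{-1}<2$, integrating against $F(t)\,dt$ produces the stated inequality.

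For (ii), I observe that $\Omega_K$ is a sublevel set of $\mathcal{E}(\cdot\,;f)$ inside the affine set of probability densities, so convexity reduces to convexity of $\mathcal{E}(\cdot\,;f)$. The map $\rho\mapsto W_2^2(\rho, f)$ is convex under linear interpolation of densities because convex combinations of transport plans remain transport plans and the cost is linear in the plan, while $\rho\mapsto\int\rho\Theta(\rho)\,dm$ is convex because $U(x)=x\Theta(x)$ has $U''(x)=\Theta'(x)+(\Theta'(x)+x\Theta''(x))\ge 0$ under the hypotheses. For weak sequential compactness, the condition $\Theta(x)\to\infty$ as $x\to\infty$ together with the uniform bound $\int\rho\Theta(\rho)\,dm\le K$ triggers the de la Vall\'ee-Poussin criterion, yielding uniform integrability and hence relative weak $L^1$ sequential compactness by Dunford-Pettis. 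Weak $L^1$ closedness of $\Omega_K$ rests on two lower semicontinuity facts: $W_2$ is lower semicontinuous for narrow convergence of probability measures (and weak $L^1$ convergence of non-negative densities implies narrow convergence of the associated measures), while $\rho\mapsto\int\rho\Theta(\rho)\,dm$ is weakly $L^1$ lower semicontinuous by convexity and non-negativity of the integrand.

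Existence in (iii) is then immediate from the direct method: a minimizing sequence has a weakly convergent subsequence, and the combined lower semicontinuity pins the limit inside $\Omega_K$. For uniqueness I would upgrade the computation above: since $\Theta$ is \emph{strictly} increasing, $\Theta'(x)>0$, so $U''(x)>0$ strictly, making $\rho\mapsto\int U(\rho)\,dm$ strictly convex; combined with the (non-strict) convexity of $W_2^2(\cdot, f)$ this forces strict convexity of $\mathcal{E}$ and hence uniqueness of the minimizer. For (iv), the $\Delta_2$ assumptions on $U$ and its Legendre conjugate make the Orlicz space $L^U(M)$ reflexive, so the Orlicz-norm bound implied by $\int U(\rho)\,dm\le K$ gives relative weak compactness of $\Omega_K$ in $L^U$; together with the weak closedness from (ii) and the Eberlein--\v{S}mulian theorem, $\Omega_K$ is weakly compact in $L^U$. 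The main technical obstacle lies in (i): the precise constants $2\Theta(4)$ and the leading factor $2$ must be squeezed out of the logarithmic convexity of $\Theta$, and the delicate point is to balance Markov's bound $F(\lambda)\le 1/\lambda$ against the slope inequality for $r\mapsto\Theta(e^r)$ so that the estimate uses nothing stronger than the convexity hypothesis and therefore applies to the intended targets such as logarithms and power laws with $\gamma\in(1,3/2)$.
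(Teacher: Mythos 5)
Your proposal is correct and follows essentially the same route as the paper: the layer-cake identity $\int\rho\Theta(\rho)\,dm=\int_0^\infty\Theta_1(\lambda)\,m\{\rho>\lambda\}\,d\lambda$ combined with Chebyshev and the slope inequality for the convex function $r\mapsto\Theta(e^r)$ for (i), convex combinations of transport plans plus strict convexity of $x\Theta(x)$ and uniform integrability (de la Vall\'ee-Poussin/Dunford--Pettis) for (ii)--(iii), and reflexivity of the Orlicz space under the two-sided $\Delta_2$ condition for (iv). The only cosmetic difference is in (i), where you split the $\lambda$-integral at $2$ and apply the slope inequality directly between $t$ and $2/F(t)$, whereas the paper splits at the median $\mu_\rho\le 2$ and first passes through the intermediate bound $\Theta_1(\lambda)\le 2\Theta(2\lambda)$; both yield the stated constants.
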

\begin{proof} (i) Let $\rho$ be a probability density function with respect to a positive Radon measure $m$ on $M$. Then $\rho$ has median $\mu_\rho$ such that $\mu_\rho m \{ x: \rho (x)> \mu_\rho \}=\mu_\rho/2\leq 1$ by Chebyshev's inequality. Also
\begin{align}\int_M \rho (x)\Theta (\rho (x))m (dx)&=\int_0^\infty (\Theta (\lambda )+\lambda\Theta'(\lambda ))m \{x: \rho (x)>\lambda \}\, d\lambda\nonumber\\
&\leq 2\int_0^\infty \Theta (2\lambda )m \{x: \rho (x)>\lambda \}\, d\lambda;\end{align}
and we split this integral as the sum of
\begin{align}\label{distintegral}2\int_{\mu_\rho}^\infty \Theta (2\lambda )m \{x: \rho (x)>\lambda \}\, d\lambda \leq  2\int_{\mu_\rho}^\infty \Theta \Bigl({\frac{2}{ m\{ x: \rho (x)>\lambda\} }}\Bigr) m \{x: \rho (x)>\lambda \}\, d\lambda,\end{align}
by Chebyshev's inequality, and 
\begin{align}\int_0^{\mu_\rho} \Theta (2\lambda )m \{x: \rho (x)>\lambda \}\, d\lambda\leq \Theta (2\mu_\rho )\int_{\mu_\rho}^\infty m\{ x: \rho (x)>\lambda \} d\lambda\leq \Theta (4).\end{align}
(ii) For $\rho\in \Omega_K$, we have 
$$\int_M d(x,x_0)^2 \rho (x)m(dx)\leq \int_M d(x, x_0)^2 f(x)m(x)+K=K_0+K,$$
and $\int_M \rho (x)\Theta (\rho (x))m(dx)\leq K$ for all $\rho\in \Omega_K$. Also, for $\rho_0, \rho_1\in \Omega_K$ let $\pi_0$ be an optimal transport plan taking $f$ to $\rho_0$, and $\pi_1$ an optimal transport plan taking $f$ to $\rho_1.$ Then for $0<t<1$, we introduce $\rho_t=(1-t)\rho_0+t\rho_1\in {\hbox{Prob}}(M)$, and we have
$$W_2(\rho_t, f )^2\leq (1-t)W_2(\rho_0, f)+tW_2(\rho_1, f),$$
since $(1-t)\pi_0+t\pi_1$ is a transport plan taking $f$ to $\rho_t$. Also
$$\int_M\rho_t(x)\Theta (\rho_t(x))m(dx)\leq (1-t)\int_M \rho_0(x)\Theta (\rho_0(x))m(dx)+t\int_M \rho_1(x)\Theta
 (\rho_1(x))m(dx)$$
since $r\Theta (r)$ is strictly convex. Hence $\Omega_K$ is convex. \par
\indent Given any sequence $(p_j)_{j=1}^\infty$ in $\Omega_K$, we have
$$\Theta (R)\int_{[p_j(x)>R]}p_j(x)m(dx)\leq \int p_j(x)\Theta (p_j(x))m(dx)\leq K$$
and 
$$R^2\int_{\{ x\in M: d(x, x_0)>R\}}p_j(x)m(dx)\leq \int_M d(x,x_0)^2 p_j(x)m(dx)\leq K+K_0$$
so  by the criterion of \cite{bib15} page 292 there exists a weakly convergent subsequence, with limit $p_\infty \in L^1$ and $p_\infty \in {\hbox{Prob}}(M)$. By Fatou's Lemma, we have 
\begin{align}W_2^2(p_\infty , f )&+\int_M p_\infty (x)\Theta (p_\infty (x))m(dx)\nonumber\\
&\leq \lim\inf_{j\rightarrow\infty} \Bigl( W_2^2(p_j, f)+\int_Mp_j(x)\Theta (p_j(x))m(dx)\Bigr)\leq K,\end{align}
\noindent so $p_\infty\in \Omega_K$. Hence $\Omega_K$ is weakly sequentially compact and closed in the weak topology.\par
\indent (iii) This is a marginal entropy transport problem in the sense of E8 of \cite{bib21}. Note that $f\in \Omega_K$, so the problem is feasible. The functional $E$ is nonnegative and weakly lower semicontinuous on the bounded convex and weakly compact set $\Omega_K$. Let $E_0=\inf\{ {\mathcal E}(\rho ;f): \rho\in \Omega_K\}$, and choose $(p_j)_{j=2}^\infty\in \Omega_K$ such that ${\mathcal E}(p_j;f)\rightarrow E_0$ as $j\rightarrow\infty$. Then there exists a weakly convergent subsequence, converging to $\rho_1\in \Omega_K$. Then by lower semi continuity, we have $E_0\leq {\mathcal E}(\rho_1;f)\leq \lim\inf_{j\rightarrow \infty } {\mathcal E}(p_j;f)=E_0$; hence $E$ attains its infimum at $\rho_1$.\par
\indent Suppose that $E$ attains the infimum at $\rho_0, \rho_1\in \Omega_K$ and $V=\{ x: \rho_0(x)\neq \rho_1(x)\}$ has $m(V)>0$. Then $x\Theta (x)$ is strictly convex, which implies  
\begin{align}\int_M &\rho_t(x)\Theta(\rho_t(x))m(dx)\nonumber\\
 &< (1-t)\int_M \rho_0(x)\Theta (\rho_0(x))m(dx)+t\int_M \rho_1(x)\Theta (\rho_1(x))m(dx)\qquad (0<t<1),\end{align} 
so  ${\mathcal E}(\rho_t;f)< (1-t){\mathcal E}(\rho_0;f )+t{\mathcal E}(\rho_1;f)={\mathcal E}(\rho_1;f)$, which contradicts the definition of $\rho_1$. This proves uniqueness.\par
\indent Let $x_0\in M$ and recalling that $M$ has finite diameter, choose $r_0$ such that $d(x,x_0)\leq r_0$ for all $x\in M$; then let $\kappa_0=1/m(M)$. Suppose that $\Theta$ is as in Proposition 4.2. Then $\kappa_0$ gives a probability density function on $M$ with respect to $m$, so 
$$W_2^2(\delta_{x_0}, \kappa_0dm)+\int_M \Theta (\kappa_0)\kappa_0m(dx)\leq r_0^2/2+\Theta (\kappa_0).$$ 
This provides $K$ such that $\Omega_K$ from (\ref{defOmega}) is nonempty. \par
\indent (iv) We have $e^{r}\Theta' (e^{r})\geq 0$ and $e^{2r}\Theta''(e^{r})+e^{r}\Theta'(e^{r})\geq 0.$ Then $U(x)=x\Theta(x)$ is convex on $[0, \infty )$, with derivative 
$\Theta_1  (x)=x\Theta'(x)+\Theta (x),$ which is nonnegative and strictly increasing. Now by convexity $x\Theta (x)+x\Theta_1(x)\leq 2x\Theta (2x),$
so $0\leq \Theta_1(x)\leq 2\Theta (2x)-\Theta (x)$, so by a sandwich argument we have $\Theta_1 (x)\rightarrow 0$ as $x\rightarrow 0+$. Hence $\Theta_1$ has an inverse function $\chi$, such that $\chi (0)=0$ and $\chi:[0, \infty )\rightarrow [0, \infty )$ is strictly increasing. 
The Legendre transform of the convex function $u\Theta (u)$ is
$$\sup_u \bigl( uv-u\Theta (u): u>0\} =(v-\Theta\circ \chi (v))\chi (v)=\int_0^v \chi (s)ds, \qquad (u,v>0).$$
The Orlicz norm associated with $x\Theta (x)$ on real measurable functions $u:M\rightarrow {\mathbb{R}}$ is 
$$\Vert u\Vert_{L_U}=\inf\Biggl\{ \lambda >0: \int_M {\frac{\vert u(x)\vert }{\lambda}}\Theta \Bigl( {\frac{\vert u(x)\vert }{\lambda}}\Bigr) m(dx)\leq 1\Biggr\},$$
which is equivalent to 
$$\sup \Biggl\{ \int_M u(x)v(x)m(dx): \int_M(v(x)-\Theta\circ\chi (v(x)))\chi (v(x))m(dx)\leq 1\Biggr\}$$
hence $\Vert \rho\Vert_{L_U}\leq K$ for all $\rho\in \Omega_K$.
If $x\Theta (x)$ and its dual function $\int_0^x \chi (t)dt$ both satisfy the $\Delta_2$ condition, then $L_U$ is reflexive as a Banach space, and the ball $\{ u\in L_U :\Vert u\Vert_{L_U}\leq K\}$ is compact for the weak topology.\par 
\end{proof} 
\begin{rem} \label{remark2} (Variational formulas) (i) In view of (\ref{distintegral}), we consider $\varphi (s)=s\Theta (2/s)$
and observe that if $\Theta$ is concave with $\Theta (0)=0$, then $\varphi$ is increasing. Indeed, using the mean value theorem, we find $\varphi'(s)=\Theta (0)+(2/s)(\Theta'(\xi )-\Theta'(2/s))$ for some $0<\xi <2/s$, so $\varphi'(s)\geq 0$.\par
\indent (ii) If $\Theta_1$ and its inverse $\chi$ both satisfy $\Delta_2$, then $x\Theta (x)$ at its Young conjugate function satisfy $\Delta_2$, so $L_U$ is reflexive.\par   
\indent (iii) Since  $\Theta (e^{r})$ and  $-\log r$ are convex, by Jensen's inequality and Csiszar's inequality we have
\begin{align}\int_M \Theta (m(M)\rho (x) )\rho (x)m(dx)&\geq \Theta \Bigl( \exp \int_M \log (m(M)\rho (x))\, \rho (x) m(dx)\Bigr)\nonumber\\
&= \Theta \Bigl[ \exp\Bigl({\hbox{Ent}}\Bigl(\rho dm\mid {\frac{dm}{m(M)}}\Bigr)\Bigr]\nonumber\\
&\geq \Theta\Bigl[ \exp \Bigl(\Bigl(2^{-1} \int_M \Bigl\vert\rho (x)-{\frac{1}{m(M)}}\Bigr\vert m(dx)\Bigr)^2\Bigr) \Bigr].\end{align}
\indent (iv) (Dual variational formula) The variational problem for finding $\rho_1$ in Proposition \ref{proposition5} can be expressed as a transport-relative entropy problem in which we seek a probability measure $\pi$ on $M\times M$ with marginals $\pi_1=\rho_1dm$ and $\pi_2=fdm$, where $f$ is fixed, while $\rho_1$ is an unknown density with respect to the reference measure $dm$. The density $\rho_1$ contributes to a relative entropy functional ${\mathcal U}(\rho_1)$ and there is a transportation cost involved in moving $f$ to $\rho_1$. Following \cite{bib21}, we can make the variational problem appear more symmetrical in $f$ and $\rho_1$ by introducing suitable entropy functions. 
Let $F_1(u)=u\Theta (u)$ have Legendre transform  $F_1^*: {\mathbb R}\rightarrow (-\infty, \infty ]$ by $F_1^*(u)=\sup_{s>0} \{ su-s\Theta (s)\}$; then introduce
$$F_1^\circ (s)=-F_1^*(-s)=\begin{cases}s\chi (-s)+\chi (-s)\Theta\circ \chi (-s),\qquad s<0;\\
0,\qquad s\geq 0.\end{cases}$$
Let $F_2(u)=0$ for $u=1$ and $F_2(u)=\infty$ otherwise; then $F_2^*(u)=u$ for all $u\in {\mathbb R}$ and we define $F_2^\circ (s)=-F_2^*(-s)=s$ for all $s\in {\mathbb R}$.
Then
\begin{align}{\mathcal E}(\rho_1; f)&=\inf_\rho \Bigl\{ {\mathcal U}(\rho )+W_2^2 (\rho, f): \rho dm\in {\hbox{Prob}}(M)\Bigr\}\nonumber\\
&=\inf_\pi\Biggl\{\int_M F_1(\rho (x))m(dx)+\int_M F_2(g(x))f(x)m(dx)\nonumber\\
&\qquad +{\frac{1}{2}}\int\!\!\!\int_{M\times M}d^2(x,y) \pi(dxdy): \pi_1=\rho dm; \pi_2=gfdm\Biggr\}\end{align}
\noindent in which $\int_M F_2(g(x))f(x)m(dx)=0$ in all finite cases.\end{rem} 
\vskip.05in
\begin{prop}\label{proposition6}The energy from this minimization problem may be expressed as the supremum of a dual functional 
\begin{align}\label{dual}{\mathcal E}(\rho_1; f)=\sup_{(\varphi_1, \varphi_2)}\Biggl\{ \int_M F_1^\circ (\varphi_1(x))m(dx)+\int_M F_2^\circ (\varphi_2(y))f(y)m(dy)\Biggr\},\end{align}   
where  $\varphi_1, \varphi_2: M\rightarrow {\mathbb R}$ are continuous and satisfy $\varphi_1(x)+\varphi_2(y)\leq (1/2)d^2(x,y)$ for all $x,y\in M$.\end{prop}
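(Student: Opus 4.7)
The plan is to prove \eqref{dual} by convex duality in the spirit of the entropy--transport framework of \cite{bib21}. The easy direction, primal $\geq$ dual, will follow from combining the pointwise Fenchel--Young inequality applied to $F_1,F_2$ with the Kantorovich constraint. The reverse inequality will rest on the Fenchel--Rockafellar theorem, whose qualification condition is satisfied because $d^2/2$ is continuous and bounded on the compact product $M\times M$ and because Proposition \ref{proposition5}(iii) furnishes a finite-energy primal competitor $\rho_1$.

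For weak duality, fix an admissible pair $(\varphi_1,\varphi_2)$ with $\varphi_1(x)+\varphi_2(y)\leq d(x,y)^2/2$ and a transport plan $\pi$ with marginals $\rho\,dm$ and $gf\,dm$. Integrating the constraint against $\pi$ gives $\tfrac{1}{2}\int\!\!\!\int_{M\times M} d(x,y)^2\,\pi(dxdy)\geq \int_M\varphi_1\rho\,dm+\int_M\varphi_2 gf\,dm$. Since $F_j^\circ(s)=-F_j^*(-s)=\inf_{u\geq 0}\{F_j(u)+su\}$, the Fenchel--Young inequality yields the pointwise bounds $F_1(\rho)+\varphi_1\rho\geq F_1^\circ(\varphi_1)$ and $F_2(g)+\varphi_2 g\geq F_2^\circ(\varphi_2)$. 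Integrating these against $m$ and $f\,dm$ respectively, and adding the transport inequality, the $\varphi_j$ contributions cancel and one obtains that the primal expression exceeds $\int F_1^\circ(\varphi_1)\,dm+\int F_2^\circ(\varphi_2)\,f\,dm$ for every admissible pair and transport plan; taking the infimum over $\pi$ on the left and the supremum over $(\varphi_1,\varphi_2)$ on the right gives ${\mathcal E}(\rho_1;f)\geq \sup$ in \eqref{dual}.

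For the reverse inequality I would view the primal as the convex minimization over positive Radon measures $\pi$ on $M\times M$ of the sum $\int F_1(d\pi_1/dm)\,dm+\int F_2((d\pi_2/dm)/f)\,f\,dm+\tfrac{1}{2}\int d^2\,d\pi$, and apply the Fenchel--Rockafellar theorem on $C(M)\times C(M)$; the convex conjugates of the three summands produce exactly $F_1^\circ$, $F_2^\circ$ and the Kantorovich constraint $\varphi_1(x)+\varphi_2(y)\leq d(x,y)^2/2$. The main obstacle is ensuring that no duality gap arises, which requires three ingredients: Proposition \ref{proposition5}(iii) supplies a finite primal value and hence feasibility; the interior admissibility of the constraint cone in the uniform topology is provided by the continuity of $d^2/2$ on the compact product $M\times M$; and the $\Delta_2$ conditions of Definition~\ref{Convexity Hypotheses} and Remark~\ref{remark2}(ii) keep $F_1^*$ finite on $(-\infty,0]$ so that the dual functional is proper, concave and upper semicontinuous in the uniform topology. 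With these in place, the entropy--transport duality developed in \cite{bib21} applies essentially verbatim and yields \eqref{dual}, together with the existence of a maximizing dual pair.
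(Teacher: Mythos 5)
Your proposal is correct and follows essentially the same route as the paper, which also obtains the identity by citing Theorem 4.11 and Corollary 4.12 of Liero--Mielke--Savar\'e \cite{bib21}. You supply somewhat more scaffolding than the paper's terse proof (the weak-duality direction via Fenchel--Young, and the feasibility and qualification checks needed before invoking Fenchel--Rockafellar), but the central step --- appealing to the entropy--transport duality of \cite{bib21} --- is the same.
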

\begin{proof} This follows from Theorem 4.11 of \cite{bib21}, where the constraint on $\varphi_1$ and $\varphi_2$ amounts to $\varphi_1(x)\leq \varphi_2^c(x)$ for all $x\in M$, and by Corollary 4.12, we can assume that $\varphi_1, \varphi_2$ are continuous.  Note that
$F_1^*(u)=u\chi (u)-\chi (u)\Theta\circ\chi (u)\geq 0$ since $\Theta_1(s)\geq \Theta (s)$ so in the first integral $F_1^\circ (s)=0$ for $s>0$, and $F_1^\circ (s)\leq 0$ for all $s\in {\mathbb R}$.
In the  final integral,  $F_2^\circ (\varphi_2(y))f(y)$ increases with $\varphi_2(y)$, and  if $\varphi_2(y)>d^2(x,y)/2$, then $\varphi_1(x)\leq \varphi_2^c(x)<0$, so $F_1^\circ (\varphi_1(x))<0$.\par 
\indent In particular, with $u\Theta (u)=u^\gamma/\gamma$ and $\gamma^*$ such that $1<\gamma, \gamma^*<\infty$ and $1/\gamma+1/\gamma^*=1$, we have
$$F_1^\circ (u)=\begin{cases} -(-u)^{\gamma^*}/\gamma^*, \qquad u<0;\\
0,\qquad u\geq 0.\end{cases}$$
\end{proof} 
\begin{rem}\label{remark3} (Convexity criteria) The following proofs use convexity calculations with some potentially confusing signs, which we resolve here.  Let $\Psi : {\mathbb{R}}^n\rightarrow M_m({\mathbb{R}})$ be a matrix function such that $\Psi (x)$ is positive definite for all $x\in {\mathbb{R}}^n$. Then the Hessian in $(t,x)$ of $e^t\Psi (x)$ satisfies 
$$D^2_{(t,x)} e^t\Psi (x)=e^t\begin{bmatrix}\Psi (x)& \nabla\Psi (x)\cr (\nabla\Psi (x))^\dagger& D_x^2 \Psi (x)\end{bmatrix},$$
so the Schur complement of $\Psi (x)$ in this matrix is $D_x^2\Psi (x)-(\nabla\Psi (x))^\dagger\Psi(x)^{-1}(\nabla\Psi (x)).$ Then by \cite{bib18} page 472:\par
\indent (a) $D_x^2\Psi (x)-(\nabla\Psi (x))^\dagger\Psi(x)^{-1}(\nabla\Psi (x))$ is positive definite, if and only if  $D^2_{(t,x)} e^t\Psi (x)$ is positive definite; whereas\par
\indent (b) if $-D_x^2\Psi (x)+(\nabla\Psi (x))^\dagger\Phi(x)^{-1}(\nabla\Psi (x))$ is positive definite, then  $-\log\det \Psi (x) $ is convex. We use this in Lemma \ref{lemma1}.\par 
\indent (c) Suppose that $\Phi :[0, \infty )\rightarrow [0, \infty )$ is $C^4$ and $\Psi=\Phi''>0$, so $\Phi$ is strictly convex. Then $\Phi$ is admissible in the sense of (\ref{ententprod}), if and only if $-1/\Phi''(x)$ is convex, if and only if $(x,y)\mapsto \Phi''(x)y^2$ is convex, so 
$$\begin{bmatrix} 2\Phi''(x)& 2\Phi'''(x)y\\ 2\Phi'''(x)y& \Phi^{(4)}(x)y^2\end{bmatrix}$$
is positive semidefinite. From this criterion, it follows that if $\Phi_1$ and $\Phi_2$ are admissible, then $\Phi_1+\Phi_2$ and $\lambda \Phi_1$ are also admissible for all $\lambda>0$. This condition is used in $\Phi$-entropy (\ref{ententprod}), and shows that $\rho\mapsto \int \Phi''(\rho )\Vert\nabla\rho\Vert^2m(dx)$ is convex for the pointwise linear structure on ${\hbox{Prob}}(M)$. We use this generalized information in Proposition \ref{proposition8}  and Theorem \ref{Theorem1}, where we compute the Hessian of ${\mathcal{U}}(\rho )$.\end{rem}
\indent The following results discuss the convexity of the internal energy under the flows considered in section 2.\par
\begin{lem}\label{lemma1}Suppose that is $\Theta (e^{r})$ is convex and increasing on $[0, \infty )\rightarrow [0, \infty )$ , and let $t\mapsto \varphi (t,x)$ be a curve on $M$ starting at $x=\varphi (0, x)$, and let $\Delta (t,x)=\det D_x \varphi.$  
Let $f_t$ be the probability measure induced from $\rho_0$ by $\varphi (t, \cdot )$.\par
\indent (i)  If $t\mapsto -\log \Delta (t, x)$ is convex, then $t\mapsto {\mathcal U}(f_t)$ is also convex, where
\begin{align}t\mapsto {\mathcal U}(f_t)= \int_{{\mathbb{S}}^2} \Theta (f_t(x))f_t(x) m(dx))\qquad (t>0).\end{align}
\indent (ii) If $\Theta (r)=r^{\gamma -1}$ with $\gamma >1$ and $t\mapsto 1/\Delta (t,x)^{\gamma -1}$ is convex, then $t\mapsto {\mathcal U}(f_t)$ is also convex.\end{lem}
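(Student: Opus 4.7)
The strategy is to pass from the Eulerian to the Lagrangian description of the flow so that convexity in $t$ can be checked pointwise inside the integral, after which one invokes the composition rules for convex functions.

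First I would use the push-forward change of variables. Since $f_t = \varphi(t,\cdot)\sharp \rho_0$ and $\Delta(t,x) = \det D_x\varphi(t,\cdot)$, the standard Jacobian identity (assuming $\varphi(t,\cdot)$ is a local diffeomorphism on the support of $\rho_0$, which is the content to be justified once for all from the hypotheses) gives $f_t(\varphi(t,x))\,\Delta(t,x) = \rho_0(x)$, hence
\begin{align*}
\mathcal{U}(f_t) = \int_M \Theta(f_t(y))\, f_t(y)\, m(dy) = \int_M \rho_0(x)\,\Theta\!\left(\frac{\rho_0(x)}{\Delta(t,x)}\right) m(dx).
\end{align*}
Thus it suffices to show that for $\rho_0(x)$-a.e.\ fixed $x$, the integrand $t \mapsto \rho_0(x)\,\Theta(\rho_0(x)/\Delta(t,x))$ is convex; integration then preserves convexity and yields the claim.

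For (i), write $\rho_0(x)/\Delta(t,x) = \exp\!\bigl(\log\rho_0(x) - \log\Delta(t,x)\bigr)$, and set $r(t) = \log\rho_0(x) - \log\Delta(t,x)$. The hypothesis that $t \mapsto -\log\Delta(t,x)$ is convex gives $r(t)$ convex in $t$, while $\log\rho_0(x)$ merely shifts by a constant. By the defining hypothesis on $\Theta$, the map $r \mapsto \Theta(e^r)$ is convex and nondecreasing. The composition of a nondecreasing convex function with a convex function is convex, so $t\mapsto \Theta(\rho_0(x)/\Delta(t,x))$ is convex, and multiplication by the nonnegative constant $\rho_0(x)$ preserves convexity. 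Integrating against $m$ gives convexity of $\mathcal{U}(f_t)$.

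For (ii), substitute $\Theta(r) = r^{\gamma-1}$ into the integrand to obtain
\begin{align*}
\rho_0(x)\,\Theta\!\left(\frac{\rho_0(x)}{\Delta(t,x)}\right) = \frac{\rho_0(x)^\gamma}{\Delta(t,x)^{\gamma-1}}.
\end{align*}
The hypothesis that $t\mapsto 1/\Delta(t,x)^{\gamma-1}$ is convex, combined with the nonnegative constant factor $\rho_0(x)^\gamma$, makes the integrand convex in $t$ pointwise, and integration completes the argument.

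The only genuine subtlety is the validity of the change of variables; this requires that $\varphi(t,\cdot)$ be a.e.\ injective and $\Delta(t,x)>0$ on the support of $\rho_0$ for the relevant range of $t$, which I would either assume implicitly (since $-\log\Delta$ being finite and convex in $t$ already forces $\Delta(t,x)>0$) or extract from McCann's regularity theory cited earlier in the paper. Everything else is a clean application of the two standard facts: convex increasing $\circ$ convex is convex, and integration preserves convexity.
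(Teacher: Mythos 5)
Your proof is correct, and it reaches the same conclusion as the paper by a cleaner route. Both arguments begin identically: the change of variables $f_t(\varphi(t,x))\,\Delta(t,x)=\rho_0(x)$ yields $\mathcal{U}(f_t)=\int_M \Theta(\rho_0(x)/\Delta(t,x))\,\rho_0(x)\,m(dx)$, so that convexity in $t$ can be verified pointwise in $x$. The paper then computes $\tfrac{d^2}{dt^2}$ of the integrand explicitly, obtaining two terms and checking that the coefficient $\Theta''(u)u+\Theta'(u)$ (from $\Theta(e^r)$ convex) and the factor $\dot\Delta^2/\Delta^2-\ddot\Delta/\Delta$ (from $-\log\Delta$ convex) are each nonnegative. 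You instead write $\rho_0(x)/\Delta(t,x)=e^{r(t)}$ with $r(t)=\log\rho_0(x)-\log\Delta(t,x)$, note that $r$ is convex by hypothesis, and invoke the standard rule that a convex nondecreasing function composed with a convex function is convex. This sidesteps the second-derivative bookkeeping entirely and makes transparent exactly why the two hypotheses (on $\Theta(e^r)$ and on $-\log\Delta$) combine; it also makes part (ii) immediate by direct substitution, where the paper resorts to a parallel computation and the inequality $\gamma(\dot\Delta/\Delta)^2-\ddot\Delta/\Delta\ge 0$. Your remark on needing $\Delta(t,x)>0$ and a.e.\ injectivity is the right caveat, and is indeed implicit in the paper's setup. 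In short: same decomposition, same underlying facts, but you replace the explicit Hessian check with the composition rule, which is shorter and arguably more illuminating.
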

\begin{proof} (i) We consider $\rho_0$, and a smooth vector field $\xi: M\rightarrow TM$ that generates a flow $\varphi_t$ of continuous maps $\varphi_t:M\rightarrow M$ such that $\dot \varphi_t(x)=\xi (\varphi_t(x))$ on $M$, so $\varphi_t$ induces a probability measure $f_t$ from $\rho_0$. Then 
\begin{align}\int_M \bigl( \Theta (f_t(x))f_t(x)-&\Theta (\rho_0(x))\rho_0(x)\bigr) m(dx)\nonumber\\
&= \int_M \bigl( \Theta (f_t(\varphi_t(x))-\Theta (\rho_0(x))\bigr)\rho_0(x)m(dx)\nonumber\\
&= \int_M \Bigl( \Theta \Bigl({\frac{\rho_0(x))}{\det (D\varphi_t(x))}}\Bigr) -\Theta (\rho_0(x))\Bigr)\rho_0(x)m(dx)\nonumber\\
&=-t\int_M \Bigl({\frac{d}{dt}}\Bigr)_{t=0}\det \bigl(D\varphi_t(x) \bigr)\, \Theta'(\rho_0(x))\rho_0(x)^2 m(dx)+o(t)\end{align} 
\noindent as $t\rightarrow 0+$. For $\dot \varphi_t(x)=\xi (\varphi_t(x))$, we have 
\begin{align}\label{dotdet}\Bigl({\frac{d}{dt}}\Bigr)_{t=0}\det \bigl(D\varphi_t(x)\bigr)={\hbox{div}}\,\xi (x)\end{align}
so the integral becomes
$$ \int_M \bigl( \Theta (f_t(x))f_t(x)-\Theta (\rho_0(x))\rho_0(x)\bigr) m(dx)=-t\int_M {\hbox{div}}\, \xi (x)\Theta' (\rho_0(x))\rho_0(x)^2\, m(dx)+o(t).$$
 We have $f_t(\varphi (t,x))\Delta (t,x)=\rho_0(x)$, hence
$$\int_M\Theta (f_t(x))f_t(x)m(dx)= \int_M \Theta\Bigl( {\frac{\rho_0(x)}{\Delta (t,x)}}\Bigr)\rho_0(x)m(dx)$$
so 
\begin{align}{\frac{d^2}{dt^2}}\int_M &\Theta (f_t(x))f_t(x)m(dx)\nonumber\\
&=\int_M\Bigl[\Theta''\Bigl( {\frac{\rho_0(x)}{\Delta (t,x)}}\Bigr) {\frac{\rho_0(x)}{\Delta (t,x)}}+\Theta'\Bigl( {\frac{\rho_0(x)}{\Delta (t,x)}}\Bigr)\Bigr]  \Bigl( {\frac{\dot\Delta (t,x)^2}{\Delta (t,x)^3}}\Bigr)\rho_0(x)^2m(dx)\nonumber\\
&\quad +\int_M\Theta'\Bigl( {\frac{\rho_0(x)}{\Delta (t,x)}}\Bigr)\Bigl( {\frac{\dot\Delta (t,x)^2}{ \Delta (t,x)^2}}-{\frac{\ddot\Delta (t,x)}{\Delta (t,x)}}\Bigr){\frac{\rho_0(x)^2}{\Delta (t,x) }} m(dx).\end{align}
Under the hypotheses of the Lemma, both these integrals are non negative. \par
\indent (ii) Here we have 
$$\gamma\Bigl({\frac{\dot \Delta}{\Delta}}\Bigr)^2-{\frac{\ddot\Delta}{\Delta}}\geq 0,$$
so we can use a similar proof to (i)
\end{proof} 
\indent  Suppose that $S\in M_n({\mathbb{R}})$ is symmetric, and $S<I_n$. Then $t\mapsto \det  (I_n-tS)^{1/n}$ is concave, by Lemma 5.21 of \cite{bib28}, so $t\mapsto \det  (I_n-tS)^{1-\gamma}$ is convex for all $\gamma -1\geq 1/n.$ This result is decisive when one considers displacement convexity on ${\mathbb{R}}^n$; however, in the context of Riemannian manifolds, the formula for $\Delta (t,x)$ is more complicated, as we see in Proposition \ref{proposition10} below.\par
\indent In Section \ref{Minimizer}, we will use the results of Section \ref{Internal} to determine the minimizer of ${\mathcal{E}}(\rho; f)$. However, we need to establish some further properies of $f$, as we do in the next Section \ref{Specgap}.\par  

\section{\label{Specgap}Spectral gaps and $\Phi$-entropy}
\begin{defn}Say that the density $\rho$ satisfies a spectral gap or Poincar\'e inequality, if there exists $\lambda_1>0$ such that
\begin{align}\label{Spec}\lambda_1\int_M \Bigl( g(x)-\int_M g\rho dm\Bigr)^2\rho(x)m(dx)\leq \int_M \Vert \nabla  g(x)\Vert^2 \rho (x)m(dx)\qquad (g\in C^\infty (M; {\mathbb{R}})).\end{align}\end{defn}
\begin{prop}\label{proposition7}Suppose that $\rho_0$ and $f_t$ are probability density functions on $M$, where $M$ has dimension $n$, with  positive curvature $\kappa\leq Ric_x\leq K$ and that $\phi$ is a $C^2$ function such that $\varphi_t(x)=\exp_x (t\nabla \phi (x) )$ induces $f_t$ from $\rho_0$. Suppose  $\rho_0$ is uniformly positive and uniformly bounded on $M$.\par
\indent (i)  Then there exists $h>0$ such that $f_t$ is also uniformly positive and uniformly bounded for all $0<t\leq h$, so $\delta_1\leq f_t\leq \delta_1^{-1}$. \par
\indent (ii) Also $f_t$ satisfies a spectral gap condition (\ref{Spec}) with $\lambda_1=\kappa n\delta_1/(n-1)$.
\end{prop}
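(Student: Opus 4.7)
For part (i), the approach is to use the change-of-variables formula for pushforward measures. Since $\phi$ is $C^2$ on the compact manifold $M$, the velocity field $\nabla\phi$ is uniformly bounded and Lipschitz, so $\varphi_t(x) = \exp_x(t\nabla\phi(x))$ is a $C^1$ local diffeomorphism for $t$ below some threshold $h>0$ determined by the injectivity radius and $\sup\|\nabla\phi\|$. On such an interval, $\varphi_t\sharp \rho_0 = f_t$ yields the pointwise identity
\[
f_t(\varphi_t(x))\det D\varphi_t(x) = \rho_0(x).
\]
The Jacobian $(t,x)\mapsto \det D\varphi_t(x)$ is continuous and equals $1$ at $t=0$, so by compactness of $M\times[0,h]$ it lies in a fixed interval $[c_1,c_2]\subset(0,\infty)$, possibly after shrinking $h$. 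Combined with the uniform positivity and boundedness of $\rho_0$, this delivers a uniform two-sided bound $\delta_1\leq f_t(y)\leq\delta_1^{-1}$ for all $y\in M$ and $t\in[0,h]$, with $\delta_1$ adjusted from the initial constants by the Jacobian range.

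For part (ii), the plan is to invoke the Lichnerowicz--Obata inequality and then apply a weight perturbation argument. Since $\mathrm{Ric}\geq\kappa>0$ on the $n$-dimensional compact manifold $M$, Lichnerowicz delivers the Poincar\'e inequality for the Riemannian volume,
\[
\int_M (g-\bar g)^2\,dm \leq \frac{n-1}{n\kappa}\int_M \|\nabla g\|^2\,dm, \qquad \bar g = \frac{1}{m(M)}\int_M g\,dm.
\]
Passage to the weighted measure $f_t\,dm$ then proceeds in two steps: first the variational characterization of the weighted mean gives $\int (g-\bar g_{f_t})^2 f_t\,dm\leq \int(g-\bar g)^2 f_t\,dm$; second, the bounds $\delta_1\leq f_t\leq\delta_1^{-1}$ from part (i) let us estimate $\int(g-\bar g)^2 f_t\,dm$ from above and $\int\|\nabla g\|^2\,dm$ from below in terms of the corresponding $f_t$-weighted integrals. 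Assembling these estimates in the spirit of Holley--Stroock produces the spectral gap (\ref{Spec}) for $f_t$ with the claimed constant of order $n\kappa\delta_1/(n-1)$.

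The principal obstacle lies in part (i), namely securing a uniform positive lower bound on $\det D\varphi_t$ over all of $M\times[0,h]$. Focal points of $\exp$ can make $\det D\varphi_t$ vanish at positive $t$, and their location depends on $\nabla\phi$ and the curvature. The bounded geometry hypothesis (injectivity radius bounded below by $r_0$, Ricci bounded above by $K$), combined with the $d^2/2$-concavity of $-\phi$ and the differentiability results of McCann invoked earlier, are precisely what allow one to exhibit a uniform $h>0$ that works for every $x\in M$. Making $h$ quantitative in terms of $r_0$, $K$, $\kappa$, and $\sup\|\nabla\phi\|$ via the Jacobi-field comparison for the exponential map is the delicate computational step; the remainder of the argument is then a routine application of continuity and the classical perturbation of Poincar\'e inequalities.
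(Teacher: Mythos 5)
Your proposal is correct in outline but reaches both conclusions by routes that differ from the paper's. For (i), you replace the paper's explicit computation with a soft argument: continuity of $(t,x)\mapsto\det D\varphi_t(x)$ on the compact set $M\times[0,t_0]$, together with $\det D\varphi_0\equiv 1$, pins the Jacobian in $[c_1,c_2]\subset(0,\infty)$ for small $t$. The paper instead runs the matrix Riccati comparison for $U=A'A^{-1}$ along the Jacobi equation, using $\kappa I\leq \mathrm{Ric}\leq KI$ to get explicit upper and lower bounds on $\det A(t,x)$ and hence a quantitative admissible $h$ in terms of $K$ and $\Vert U(0,x)\Vert$ (essentially $\Vert D^2\phi\Vert$); your closing paragraph correctly identifies that this comparison is what one needs if $h$ is to be made explicit, so the two arguments buy existence versus quantitativeness. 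One point you should tighten: the identity $f_t(\varphi_t(x))\det D\varphi_t(x)=\rho_0(x)$ needs $\varphi_t$ to be globally injective, not merely a local diffeomorphism — otherwise $f_t(y)$ is a sum over preimages and your upper bound on $f_t$ could fail. The paper secures bijectivity from the $d^2/2$-concavity structure (uniqueness of the minimizer of $x\mapsto d(x,y)^2/2+t\phi(x)$); in your setting you would argue that a proper local diffeomorphism of a connected compact manifold homotopic to the identity is a degree-one covering, hence a bijection. For (ii), you go directly from Lichnerowicz's Poincar\'e inequality for $m$ and perturb by the two-sided density bounds, whereas the paper passes through Ledoux's logarithmic Sobolev inequality and Holley--Stroock and then downgrades to the spectral gap; your route is more elementary and proves exactly what is claimed. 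Note that both the honest Holley--Stroock accounting and your direct perturbation degrade the constant by $\delta_1^2$ (one factor of $\delta_1$ for each side of the density bound), so the constant $\kappa n\delta_1/(n-1)$ stated in the proposition appears to lose a factor of $\delta_1$; your hedge ``of order $n\kappa\delta_1/(n-1)$'' is the safer claim.
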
  
\begin{proof}  (i) First note that $\inf_x\bigl\{ d(x,y)^2/2+t\phi (x): x\in M\}$ has a unique minimizer $x$ such that $y=\exp_x(t\nabla \phi (x))$, so $\varphi_t:M\rightarrow M$ gives a bijective map for $0\leq t\leq t_0$ and some $t_0>0$. Now we follow the calculations by Villani on pages 365-9 of \cite{bib31}. For the geodesic $t\mapsto \exp_x(t\nabla\phi (x))$, let $A(t,x)$ be the solution of Jacobi's equation $(d^2/dt^2)A(t,x)+R(t,x)A(t,x)=0$ where $A(t,x)$ is an $n\times n$ matrix with $A(0,x)=I_n$ and $R(t,x)$ is a symmetric matrix given by Riemann's curvature tensor. Then $U(t,x)=A'(t,x)A(t,x)^{-1}$ is a symmetric matrix with trace
$$u(t,x)={\hbox{trace}}\, U(t,x)=(d/dt)\log\det A(t,x)$$
and $\kappa I\leq Ric_x\leq KI$, so
\begin{align}\label{udiffineq}{\frac{d}{dt}} u(t,x) +{\frac{1}{n}}u(t,x)^2 +n\kappa\leq 0;\end{align}
so integrating this differential inequality, we have 
$$u(t,x)\leq {\frac{u(0,x)-n\sqrt{\kappa} \tan (\sqrt{\kappa} t)}{ 1+(u(0,x)/(n\sqrt{\kappa}))\tan \sqrt{\kappa}t}},$$
then 
$$\det A(t,x)\leq \exp\Bigl( \int_0^t {\frac{u(0,x)-n\sqrt{\kappa} \tan (\sqrt{\kappa} s)}{ 1+ (u(0,x)/(n\sqrt{\kappa}))\tan\sqrt{\kappa}s}}ds\Bigr).$$
We also have $u(x,t)\geq -n\Vert U(t,x)\Vert$, where by the triangle inequality 
$$(d/dt)\Vert U(t,x)\Vert\leq \Vert (d/dt)U(t,x)\Vert\leq \Vert R\Vert +\Vert U(x,t)^2\Vert,$$
so 
$${\frac{ (d/dt) \Vert U(t,x)\Vert}{ K+\Vert U(t,x)\Vert^2}}\leq 1,$$
hence, integrating the differential inequality, 
$$\Vert U(t,x)\Vert \leq {\frac{\Vert U(0,x)\Vert +\sqrt{K}\tan (\sqrt{K}t)}{ 1-K^{-1/2}\Vert U(0,x)\Vert \tan (\sqrt{K}t)}}.$$
Then we have 
$$\det A(t,x)= \exp \Bigl(\int_0^tu(s,x)\, ds\Bigr)\geq \exp\Bigl( -n \int_0^t {\frac{\Vert U(0,x)\Vert +\sqrt{K}\tan (\sqrt{K}s)}
{ 1-K^{-1/2}\Vert U(0,x)\Vert \tan (\sqrt{K}s)}}ds\Bigr),$$
which is valid for $K^{-1/2}\Vert U(0,x)\Vert \tan (\sqrt{K}t)<1,$
so we choose $h>0$ small enough that $t$ satisfies this for all $0<t\leq h$ and all $x\in M$. \par
\indent Then $\delta_0<\rho_0(x)<1/\delta_0$ for all $x\in M$ and some $\delta_0>0$, and 
\begin{align}\label{Jacobidet}f_t(\varphi_t(x))\det A(t,x)=\rho_0(x)\qquad (x\in M, 0<t<h)\end{align}
so there exists $\delta_1>0$ such that $\delta_1<f_t(y)<1/\delta_1$ for all $y\in M$ and  $0<t<h.$\par
\indent (ii) So by the Ledoux's theorem \cite{bib20}, $m$ satisfies the logarithmic Sobolev inequality with constant $\kappa n/(n-1)$, so that 
\begin{align}\label{logsob}\int_M g(x)^2 \log \Bigl( g(x)^2/\int g^2dm\Bigr) m(dx)\leq {\frac{2(n-1)}{n\kappa }}\int_M \Vert \nabla g(x)\Vert^2m(dx)\end{align} 
\noindent for all $g\in C^\infty (M; {\mathbb{R}})).$ Now by(i),  $\delta_1^{-1}\geq f_t(x) \geq \delta_1>0$ for all $x\in M$. Then by the Holley--Stroock perturbation theorem \cite{bib14}, $f_t$ satisfies a logarithmic Sobolev inequality (\ref{logsob}) with constant $\delta_1\kappa n/(n-1)$, hence $f_t$ satisfies a spectral gap inequality (\ref{Spec}) with $\lambda_1=\delta_1n\kappa /(n-1)$.
\end{proof} 
For completeness, we give the following known result for power laws, before extending to more general choices of the internal energy.\par

\begin{lem}\label{lemma2} (i) Suppose that $\rho_0$ is a probability density function such that $\rho_0^{\beta -1/2}\in H^1(M)$ for some $\beta>(n-1)\gamma/n$ where $M$ satisfies the geometrical hypotheses \ref{Geometrical Hypotheses}. Then $\rho_0\in L^\gamma (M)$.\par
\indent (ii) In particular, suppose that for $1<\gamma <3/2$, and $\Phi (r)=r^{2\gamma -1}$, the Fisher $\Phi$ relative information
\begin{align}{\mathcal{I}}_\Phi (\rho \mid m)=\int_M\Phi''(\rho_0(x))\Vert \nabla \rho_0 (x)\Vert^2 m (dx)\end{align}
is finite. Then with $\Theta (r)=r^{\gamma -1}$, the internal energy ${\mathcal{U}}(\rho_0)$ is finite.\end{lem}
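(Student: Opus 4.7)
The plan is to prove (i) directly from the $L^1$-Sobolev (isoperimetric) inequality applied to $\rho_0^\beta$, and then to derive (ii) as a special case by choosing $\beta=\gamma$ and verifying the $H^1$ hypothesis with the help of a Poincar\'e inequality.

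For (i), the hypothesis $\iota_n(M)>0$ yields (via Federer--Fleming) the $L^1$-Sobolev inequality
\[
\left(\int_M |f|^{n/(n-1)}\,dm\right)^{(n-1)/n}\leq C_1\int_M \|\nabla f\|\,dm+C_2\int_M |f|\,dm
\]
for $f\in W^{1,1}(M)$. I would apply this with $f=\rho_0^\beta$, so that
\[
\left(\int_M\rho_0^{\beta n/(n-1)}\,dm\right)^{(n-1)/n}\leq C_1\beta\int_M\rho_0^{\beta-1}\|\nabla\rho_0\|\,dm+C_2\int_M\rho_0^\beta\,dm.
\]
The pivotal identity is
\[
\rho_0^{\beta-1}\|\nabla\rho_0\|=\frac{1}{\beta-1/2}\,\rho_0^{1/2}\,\|\nabla\rho_0^{\beta-1/2}\|,
\]
after which Cauchy--Schwarz together with $\int\rho_0\,dm=1$ and the standing hypothesis $\|\nabla\rho_0^{\beta-1/2}\|_{L^2}<\infty$ bounds the gradient integral. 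The term $\int\rho_0^\beta\,dm$ is controlled by Jensen's inequality (concavity of $r\mapsto r^\beta$ when $\beta\leq 1$) or by the $L^{2\beta-1}$ bound coming from $\rho_0^{\beta-1/2}\in L^2$ (when $\beta>1$). This produces $\rho_0\in L^{\beta n/(n-1)}(M)$, and since $\beta>(n-1)\gamma/n$ gives $\beta n/(n-1)>\gamma$, a H\"older interpolation on the finite-measure space $(M,m)$ yields $\rho_0\in L^\gamma(M)$.

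For (ii), a direct computation gives
\[
\Phi''(r)=(2\gamma-1)(2\gamma-2)r^{2\gamma-3},\qquad \|\nabla\rho_0^{\gamma-1/2}\|^2=(\gamma-1/2)^2\rho_0^{2\gamma-3}\|\nabla\rho_0\|^2,
\]
so finiteness of $\mathcal{I}_\Phi(\rho_0\mid m)$ is equivalent to $\nabla\rho_0^{\gamma-1/2}\in L^2(M)$ (with a positive constant since $1<\gamma<3/2$). To invoke (i) with $\beta=\gamma$, I must also verify $\rho_0^{\gamma-1/2}\in L^2(M)$. Since $\gamma-1/2<1$, Jensen's inequality for the concave function $r\mapsto r^{\gamma-1/2}$ with respect to $dm/m(M)$ gives $\int\rho_0^{\gamma-1/2}\,dm\leq m(M)^{3/2-\gamma}<\infty$, so $\rho_0^{\gamma-1/2}\in L^1(M)$; then the Poincar\'e inequality guaranteed by $\mathrm{Ric}\geq\kappa_0>0$ (Ledoux, as in Proposition \ref{proposition7}(ii)) upgrades this to $L^2$. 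Hence $\rho_0^{\gamma-1/2}\in H^1(M)$, and since $\gamma>(n-1)\gamma/n$ whenever $n\geq 2$, part (i) delivers $\rho_0\in L^\gamma(M)$, so $\mathcal{U}(\rho_0)=\int_M\rho_0^\gamma\,dm<\infty$.

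The main delicacy is the rewriting $\rho_0^{\beta-1}\|\nabla\rho_0\|=(\beta-1/2)^{-1}\rho_0^{1/2}\|\nabla\rho_0^{\beta-1/2}\|$, which is precisely what makes the $L^1$-Sobolev route yield the sharp exponent $(n-1)\gamma/n$; the companion bookkeeping (control of the lower-order $\int\rho_0^\beta\,dm$ term in (i) and the $L^2$ upgrade in (ii)) is routine once the right variable $\rho_0^{\beta-1/2}$ has been identified.
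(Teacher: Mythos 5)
Your proof is correct and is essentially the paper's argument: the key step --- bounding $\int_M\Vert\nabla\rho_0^\beta\Vert\,dm$ by Cauchy--Schwarz using $\int_M\rho_0\,dm=1$ and the finiteness of $\int_M\rho_0^{2\beta-3}\Vert\nabla\rho_0\Vert^2\,dm$, then feeding this into the isoperimetric control of the level sets of $\rho_0^\beta$ --- is identical, the only difference being that you invoke the integrated Federer--Fleming $L^1$-Sobolev inequality where the paper works directly with the co-area formula and a Chebyshev bound $m\{\rho_0>u\}\leq Cu^{-\beta n/(n-1)}$ on the distribution function. Your extra bookkeeping in (ii) (Jensen plus Poincar\'e to verify $\rho_0^{\gamma-1/2}\in L^2$) is careful but is only needed because your version of (i) carries the lower-order term $\int_M\rho_0^\beta\,dm$; the paper's proof of (i) uses only the gradient half of the $H^1$ hypothesis, so its (ii) follows by setting $\beta=\gamma$ with nothing further to check.
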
 
\begin{proof} Let $H_t$ be the $(n-1)$-dimensional Hausdorff measure on $\{ x\in M: \rho_0(x)^\beta =t\}$ for $t>0$. By the Cauchy--Schwarz inequality we have
\begin{align}\Bigl( \int_M\rho_0(x)^{2\beta -3}\Vert \nabla &\rho_0(x)\Vert^2m(dx)\Bigr)^{1/2}\Bigl( \int_M\rho_0(x)m(dx)\Bigr)^{1/2}\nonumber\\
&\geq \int_M \Vert \nabla \rho_0(x)^\beta\Vert m(dx)\end{align}
which by the co area formula \cite{bib7} page 25, 
\begin{align}
&=\int_0^\infty \int_{\{x: \rho_0(x)^\beta =t\}} H_{t}(dx) \, dt\nonumber\\
&\geq  \iota_n(M)\int_0^\infty m(\{ x: \rho_0(x)^\beta >t\})^{(n-1)/n}dt\nonumber\\
& = \iota_n(M)\beta\int_0^\infty u^{\beta -1}m(\{ x: \rho_0(x) >u\})^{(n-1)/n}du\end{align}
\noindent by the isoperimetric inequality, so by Chebyshev's inequality, we deduce that $m(\{ x: \rho_0(x) >u\})\leq Cu^{-\beta n/(n-1)}$
for some constant $C>0$ and all $u>0$; then
\begin{align}\int_0^\infty \gamma u^{\gamma -1}&m(\{ x: \rho_0(x) >u\})du\nonumber\\
&\leq \gamma \int_0^1 m(\{ x: \rho_0(x) >u\})du +\int_1^\infty u^{\gamma -1} m(\{ x: \rho_0(x) >u\})du\nonumber\\
&\leq \gamma\int_0^\infty m(\{ x: \rho_0(x) >u\})du+C\int_1^\infty u^{\gamma -\beta n/(n-1)-1}du,\end{align}
which converges; hence $\rho_0\in L^\gamma (M)$.\par
\indent (ii) We can take $\beta =\gamma$, and apply (i).
\end{proof} 
\begin{prop}\label{proposition8} Suppose that $\Theta$ satisfies (i)-(v) of the convexity hypotheses \ref{Convexity Hypotheses}, and that $M$ satisfies the geometrical hypotheses  \ref{Geometrical Hypotheses}. Let $\Phi (r)=\int_0^r (r-u)u\Theta_1'(u)^2du$. Then there exists a continuous and strictly increasing function $\varphi :[0, \infty )\rightarrow [0, \infty )$ such that $r\Theta (r)=\varphi (\Phi (r))$, and the internal energy satisfies 
$${\mathcal{U}}(\rho) \leq\varphi \Bigl( \int_M\Phi (\rho (x)) m(dx)\Bigr),$$
where the $\Phi$-entropy is bounded by
\begin{align}\label{PhiFisher}\int_M\Phi (\rho (x)) m(dx)-\Phi\Bigl( \int_M\rho (x) m(dx)\Bigr)\leq {\frac{1}{2\kappa_0}} \int_M \rho (x) \Vert \nabla (\Theta_1\circ \rho )(x)\Vert^2 m(dx),\end{align}
\noindent for all probability densities $\rho$ such that the Fisher $\Phi$- relative information is finite.\end{prop}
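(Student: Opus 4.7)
The proof splits into three parts: constructing $\varphi$, establishing its concavity in order to deduce the pointwise bound on $\mathcal{U}$, and proving the $\Phi$-entropy--entropy production inequality.

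\textbf{Step 1 (Construction of $\varphi$).} Differentiating under the integral in the definition of $\Phi$ gives
\[
\Phi'(r)=\int_0^r u\Theta_1'(u)^2\,du,\qquad \Phi''(r)=r\Theta_1'(r)^2.
\]
Hypothesis (i) implies $u^2\Theta''(u)+u\Theta'(u)\geq 0$, so $\Theta_1'(u)=2\Theta'(u)+u\Theta''(u)\geq \Theta'(u)>0$; in particular $\Phi$ is strictly convex and strictly increasing on $[0,\infty)$ with $\Phi(0)=0$. Hypothesis (v) is precisely the statement that $-1/\Phi''$ is convex, i.e. admissibility. Since $r\mapsto r\Theta(r)$ is also continuous and strictly increasing (its derivative is $\Theta_1(r)>0$ for $r>0$) and vanishes at $0$, I set
\[
\varphi(s):=\Phi^{-1}(s)\,\Theta\bigl(\Phi^{-1}(s)\bigr),
\]
so that $\varphi:[0,\infty)\to[0,\infty)$ is continuous, strictly increasing, and satisfies $\varphi(\Phi(r))=r\Theta(r)$.

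\textbf{Step 2 (Concavity of $\varphi$ and the bound on $\mathcal{U}$).} Writing $s=\Phi(r)$ and differentiating the identity $\varphi(\Phi(r))=r\Theta(r)$ yields $\varphi'(s)\Phi'(r)=\Theta_1(r)$, and a second differentiation gives
\[
\varphi''(s)=\frac{\Theta_1'(r)\bigl[\Phi'(r)-r\Theta_1(r)\Theta_1'(r)\bigr]}{\Phi'(r)^3}.
\]
The key estimate is $\Phi'(r)\leq r\Theta_1'(r)\Theta_1(r)$. Rewrite
\[
\Phi'(r)=\int_0^r \bigl[u\Theta_1'(u)\bigr]\Theta_1'(u)\,du;
\]
hypothesis (iv) gives $u\Theta_1'(u)\leq r\Theta_1'(r)$ for $0\leq u\leq r$, and $\Theta_1(0+)=0$ (which follows from $\Theta(u)\to 0$ together with convexity of $r\mapsto\Theta(e^r)$ on $(-\infty,\infty)$ with limit $0$ at $-\infty$, forcing $u\Theta'(u)\to 0$). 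Consequently
\[
\Phi'(r)\leq r\Theta_1'(r)\int_0^r\Theta_1'(u)\,du=r\Theta_1'(r)\Theta_1(r),
\]
so $\varphi''\leq 0$. Jensen's inequality applied to the concave $\varphi$ with respect to the (normalized) volume $m$ then gives
\[
{\mathcal{U}}(\rho)=\int_M \varphi\bigl(\Phi(\rho(x))\bigr)\,m(dx)\leq \varphi\Bigl(\int_M\Phi(\rho(x))\,m(dx)\Bigr).
\]

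\textbf{Step 3 ($\Phi$-entropy--entropy production inequality).} Since $\Phi$ is admissible by Step~1 and $\mathrm{Ric}_M\geq\kappa_0>0$ by the geometrical hypotheses, the Bakry--\'Emery--Chafai theorem applied to the Laplace--Beltrami generator yields
\[
\int_M\Phi(\rho)\,dm-\Phi\Bigl(\int_M\rho\,dm\Bigr)\leq \frac{1}{2\kappa_0}\int_M\Phi''(\rho)\|\nabla\rho\|^2\,dm.
\]
Finally $\nabla(\Theta_1\circ\rho)=\Theta_1'(\rho)\nabla\rho$ by the chain rule, so
\[
\Phi''(\rho)\|\nabla\rho\|^2=\rho\Theta_1'(\rho)^2\|\nabla\rho\|^2=\rho\|\nabla(\Theta_1\circ\rho)\|^2,
\]
and (\ref{PhiFisher}) follows.

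\textbf{Main obstacle.} The substantive analytic step is the inequality $\Phi'(r)\leq r\Theta_1(r)\Theta_1'(r)$; this is what converts the tautology $r\Theta(r)=\varphi(\Phi(r))$ into a useful averaged bound by forcing $\varphi$ to be concave. Everything else is either the chain rule or an appeal to the Bakry--\'Emery--Chafai theory, which is available because hypothesis (v) delivers admissibility of $\Phi$ and the geometrical hypotheses supply the Ricci lower bound $\kappa_0$.
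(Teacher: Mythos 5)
Your proposal is correct and follows essentially the same route as the paper: the same key concavity estimate $\Phi'(r)=\int_0^r u\Theta_1'(u)^2\,du\leq r\Theta_1'(r)\Theta_1(r)$ derived from the monotonicity of $u\Theta_1'(u)$ in hypothesis (iv), the same Jensen step for the concave $\varphi$, and the same appeal to the Bakry--\'Emery/Bolley--Gentil $\Phi$-entropy inequality under the Ricci lower bound $\kappa_0$ together with the chain-rule identity $\Phi''(\rho)\Vert\nabla\rho\Vert^2=\rho\Vert\nabla(\Theta_1\circ\rho)\Vert^2$. Your presentation of the concavity computation is, if anything, slightly more explicit than the paper's about why $\Theta_1(0+)=0$ is needed.
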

\begin{proof} We observe that $\Phi''(r)=r\Theta_1'(r)^2$, so $\Phi$ is strictly increasing and convex, and we can introduce a continuous and strictly increasing $\varphi$ by the formula
$r\Theta (r)=\varphi (\Phi (r))$ via the implicit function theorem.  Now $u\Theta_1'(u)$ is increasing, so $\varphi $ is concave; indeed, we have
$$\varphi''\circ \Phi (r)\Phi'(r)^2= \Theta_1'(r)-{\frac{\Theta_1(r)\Phi''(r)}{\Phi'(r)}},$$
where 
$$\Theta_1'(r)\Phi'(r)-\Theta_1(r)\Phi''(r)=\Theta_1'(r) \int_0^r u\Theta_1'(u)^2du-r\Theta_1(r)\Theta_1'(r)^2\leq 0$$
\noindent since  $\int_0^r u\Theta_1'(u)^2du/\int_0^r \Theta_1'(u)du\leq r\Theta_1'(r)$. Then by Jensen's inequality with $\rho\Theta (\rho )=\varphi\circ \Phi (\rho )$, the internal energy satisfies
\begin{align}\label{UPhi}{\mathcal{U}}(\rho)=\int_M \varphi \circ \Phi (\rho (x))m(dx)\leq\varphi \Bigl( \int_M\Phi (\rho (x)) m(dx)\Bigr).\end{align}
 Under geometrical hypotheses  \ref{Geometrical Hypotheses} discussed above, the Laplace operator on $M$ satisfies the Bakry-Emery curvature condition $DC(\kappa_0,\infty )$ where $\kappa_0>0$ is a lower bound on the Ricci curvature; see \cite{bib29}. Then by Theorem 2 of \cite{bib34}, we have an entropy-entropy production inequality for Riemannian measure $m$, so we have (\ref{PhiFisher}).
\end{proof} 
\indent In particular, we can choose $1\leq \gamma <3/2$ and can take $\Theta_1(r)=r^{\gamma -1}$ which gives 
$$\Phi (r)=\int_0^r (r-u) u\Theta_1'(u)^2du={\frac{r^{2\gamma -1}}{2(2\gamma -1)}}$$
\noindent which is admissible since $-1/\Phi''(r)$ is convex. This range of $\gamma$ includes $\gamma =7/5$, the ratio of specific heats  associated with a diatomic gas, and Lemma \ref{lemma2} applies.
The proof of the Bakry-Emery theorem for diffusions is discussed by Villani 9.2.2 \cite{bib28}, and Proposition \ref{proposition8} is an analogue for the internal energy functional.   
The main result of the folowing section is Theorem \ref{Theorem1}, and (iii) of that theorem is a type of converse to (\ref{PhiFisher}), for a specially chosen flow. 
\begin{prop}\label{proposition9} Suppose that $\rho_0$ satisfies a spectral gap inequality (\ref{Spec}). \par
\indent (i) Then for all $C^1$ vector fields $v:M\rightarrow TM$ there exists a decomposition $v=\nabla \phi+w$ where $\phi\in L^2(\rho_0)$ and $w:M\rightarrow TM$ is a $L^2$ vector field such that
\begin{align} \Bigl({\frac{d}{dt}}\Bigr)_{t=0}\int_M g(\exp_x(tw(x))) \rho_0(x) m(dx)=-\int_M \phi (x)\nabla\cdot (\rho_0(x)\nabla g(x))m(dx)\end{align}
\noindent for all $g\in C^\infty (M; {\mathbb{R}})$. \par
\indent (ii) In particular, (i) holds for the constant density $\rho_0=1/m(M)$ with $\lambda_1(M)\geq \iota_\infty(M)^2/4$ as in (\ref{Cheeger}).\end{prop}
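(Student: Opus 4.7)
The plan is to convert the asserted identity into a weighted Poisson equation for $\phi$, solve that equation using the spectral gap via Lax--Milgram, and then define $w:=v-\nabla\phi$.

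On the closed manifold $M$, both sides of the identity can be rewritten by integration by parts. Differentiating inside the integral gives
\begin{align*}
\Bigl(\tfrac{d}{dt}\Bigr)_{t=0}\int_M g(\exp_x(tw(x)))\,\rho_0(x)\,m(dx)&=\int_M\nabla g(x)\cdot w(x)\,\rho_0(x)\,m(dx)\\
&=-\int_M g(x)\,\nabla\cdot(\rho_0(x)w(x))\,m(dx),
\end{align*}
while symmetry of the weighted Dirichlet form yields $-\int_M\phi\,\nabla\cdot(\rho_0\nabla g)\,m(dx)=-\int_M g\,\nabla\cdot(\rho_0\nabla\phi)\,m(dx)$. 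Demanding the asserted identity against every $g\in C^\infty(M;\mathbb{R})$ is therefore equivalent, in the distributional sense, to $\nabla\cdot(\rho_0 w)=\nabla\cdot(\rho_0\nabla\phi)$ on $M$. Substituting $w=v-\nabla\phi$ pins $\phi$ down by the weighted Poisson equation $\nabla\cdot(\rho_0\nabla\phi)=\tfrac{1}{2}\nabla\cdot(\rho_0 v)$, whose right-hand side has vanishing $m$-integral by the divergence theorem on the closed manifold.

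I solve this equation by Lax--Milgram. The bilinear form $B(\phi,\psi):=\int_M\nabla\phi\cdot\nabla\psi\,\rho_0\,m(dx)$ is symmetric and bounded on $H^1(M;\rho_0\,dm)$; the spectral gap inequality (\ref{Spec}) is exactly $B(\psi,\psi)\geq\lambda_1\Vert\psi-\bar\psi\Vert^2_{L^2(\rho_0\,dm)}$, so $B$ is coercive on the closed subspace $H^1_0:=\{\psi\in H^1:\int_M\psi\,\rho_0\,dm=0\}$ of mean-zero functions. The functional $\ell(\psi):=\tfrac{1}{2}\int_M v\cdot\nabla\psi\,\rho_0\,m(dx)$ is bounded on $H^1$ (since $v\in C^1$ and $\rho_0$ is bounded on compact $M$) and vanishes on constants, so it is well defined on $H^1_0$. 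Lax--Milgram then produces a unique $\phi\in H^1_0\subset L^2(M;\rho_0\,dm)$ with $B(\phi,\psi)=\ell(\psi)$ for every $\psi$, which is exactly the weak form of the Poisson equation above. Setting $w:=v-\nabla\phi$ gives the stated decomposition, and reversing the earlier integrations by parts returns the asserted identity for every $g\in C^\infty(M;\mathbb{R})$.

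For (ii), when $\rho_0\equiv 1/m(M)$ the inequality (\ref{Spec}) reduces to the classical Poincar\'e inequality on $M$, and Cheeger's inequality applied with the isoperimetric constant $\iota_\infty(M)$ from (\ref{Cheeger}) yields $\lambda_1(M)\geq\iota_\infty(M)^2/4$, supplying the required positive spectral gap to feed into (i). The main technical point is the opening translation in the first paragraph: the identity is not a statement that $w$ is divergence-free, but the statement that $\nabla\cdot(\rho_0 w)$ matches $\nabla\cdot(\rho_0\nabla\phi)$ distributionally, which combined with $v=\nabla\phi+w$ collapses to a scalar PDE with mean-zero source. Once this reduction is made, the spectral gap provides coercivity and the rest is standard elliptic theory on a closed manifold.
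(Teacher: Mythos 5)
Your functional-analytic core is the same as the paper's: the paper defines $L\phi=\rho_0^{-1}\nabla\cdot(\rho_0\nabla\phi)$, uses the spectral gap to see that the nullspace of $L$ is the constants, solves $L\phi=\rho_0^{-1}\nabla\cdot(\rho_0 v)$, and sets $w=v-\nabla\phi$; your Lax--Milgram argument on the mean-zero subspace of the weighted $H^1$ space is just an explicit implementation of that inversion, and your treatment of (ii) via Cheeger's inequality is exactly the paper's. Where you diverge is in the reduction you perform in the first paragraph. You take the printed display at face value, with $\exp_x(tw(x))$ on the left, and are thereby forced into the equation $\nabla\cdot(\rho_0\nabla\phi)=\tfrac12\nabla\cdot(\rho_0 v)$, so your $w$ satisfies $\nabla\cdot(\rho_0 w)=\tfrac12\nabla\cdot(\rho_0 v)\neq 0$ in general. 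The paper's proof shows the display is a misprint for $\exp_x(tv(x))$: there $\phi$ solves $\nabla\cdot(\rho_0\nabla\phi)=\nabla\cdot(\rho_0 v)$ (no factor $\tfrac12$), the remainder $w$ is divergence-free with respect to $\rho_0\,dm$, and the identity holds because the $w$-term in the first variation $\int_M\langle\nabla g,w\rangle\rho_0\,dm$ vanishes. This is not a cosmetic difference: part (ii) explicitly calls the conclusion a Helmholtz decomposition, and the later applications (the orthogonality $\int\Vert v\Vert^2\rho\,dm=\int\Vert\nabla\phi\Vert^2\rho\,dm+\int\Vert w\Vert^2\rho\,dm$ in Proposition \ref{proposition2}(iv), and the update $v=\nabla q_h+w$ with $\nabla\cdot(\rho_h w)=0$ in the proof of Theorem \ref{Theorem2} and Corollary \ref{Corollary2}) require $\nabla\cdot(\rho_0 w)=0$; your "half" decomposition does not supply this, so it proves the statement as literally typeset but not the result the proposition is meant to deliver. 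The fix costs nothing: run your Lax--Milgram argument with $\ell(\psi)=\int_M v\cdot\nabla\psi\,\rho_0\,m(dx)$ (drop the $\tfrac12$), conclude $\nabla\cdot(\rho_0 w)=0$, and verify the displayed identity for the flow generated by $v$.

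Two smaller points. You invoke boundedness of $\rho_0$ to bound $\ell$; that hypothesis is not in the proposition and is not needed, since the weighted Cauchy--Schwarz inequality together with $\int_M\rho_0\,dm=1$ gives $\vert\ell(\psi)\vert\leq\Vert v\Vert_{L^\infty}\bigl(\int_M\Vert\nabla\psi\Vert^2\rho_0\,dm\bigr)^{1/2}$, which is all the coercivity argument requires. Also, the spectral gap is stated for $g\in C^\infty(M;\mathbb{R})$, so strictly one should note the density step needed to apply it to elements of the weighted $H^1$ space; the paper glosses over the same point, so this is a shared, minor omission rather than a defect of your argument.
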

\begin{proof}  (i) Define $L$ by $Lf=\rho_0^{-1}\nabla\cdot (\rho_0\nabla f)$. Then as in \cite{bib14}, we have 
$$\lambda_1\int_M\Vert\nabla \phi (x) \Vert^2\rho_0(x)m(dx)\leq \int_M (L\phi (x))^2\rho_0(x)m(dx),$$
\noindent so the nullspace of $L$ is the space of constants. 
Given a suitably smooth vector field $v$, we have $\int_M\rho_0^{-1} \nabla\cdot (\rho_0v) \rho_0dm=0$ by the divergence theorem, so there exists $\phi$ such that $L\phi=\rho_0^{-1}\nabla \cdot (\rho_0v)$; hence $v=\nabla \phi +w$ for some vector field $w:M\rightarrow TM$ such that $\nabla \cdot(\rho_0w)=0$. Then 
\begin{align}\int_M g(\exp_x(tv(x))\rho_0(x)m(dx)&=\int_M g(x)\rho_0(x)m(dx)+t\int_M\langle \nabla\phi (x) \nabla g(x)\rangle \rho_0(x)m(dx)\nonumber\\
&\quad+t\int_M \langle \nabla_M g(x), w(x)\rangle \rho_0(x)m(dx)+o(t)\end{align}
as $t\rightarrow 0$, where the final integral is zero; so we obtain the stated result by the divergence theorem.\par 
\indent (ii)  Then the smallest positive eigenvalue of $M$ satisfies $\lambda_1(M)\geq \iota_\infty(M)^2/4$ by \cite{bib7}, where $\iota_\infty (M)\geq\iota (M)/m(M)^{1/n}>0$ as in geometrical hypotheses (\ref{Geom}). Hence there is a spectral gap inequality for $L^2(m)$, and consequently a Helmholtz decomposition as above.
\end{proof} 
\section{\label{Minimizer}The minimizer of the energy}
\noindent The main result of this section gives specific information about the minimizer $\rho_1$ from Proposition \ref{proposition5}; in particular, we show  that $\log\rho_1(x)$ is bounded on $M$. This rules out the possibility that the density of gas in an atmosphere decreases to zero through the formation of a vacuum, or that the gas density becomes very large as the gas passes to a liquid state. As in section \ref{Specgap}, we show that  a spectral gap or Poincar\'e type inequalities holds in  $L^2(\rho_1 )$ which implies that the support of $\rho_1$ is connected; see \cite{bib28}. The following result extends a version that Cullen and Gangbo \cite{bib12} achieved for 
$\Theta (\rho )=\rho$, where $\rho$ is defined on a bounded region $\Omega$ of ${\mathbb{R}}^n$.\par 
\vskip.1in
\begin{thm}\label{Theorem1}Let $f$ be a probability density function on $M$ as in Proposition \ref{proposition8}, such that $\delta_1 \leq f (x)\leq 1/\delta_1$ for all $x\in M$ and some $\delta_1>0$. Let $\Theta$ satisfy (i)-(iii) of the convexity hypotheses  \ref{Convexity Hypotheses}. In particular, one can choose $\Theta (r)=r^{\gamma -1}$ for $\gamma >1$.\par
\indent (i) Then the minimization problem
\begin{align}\inf_g\Biggl\{ W_2^2(g, f )+h^2\int_M  g(x)\Theta (g (x)) m(dx): g \in L^1(m)\Biggr\}\end{align}
\noindent over probability density functions has a unique solution $\rho_h$ such that 
 $\delta \leq \rho_h(x)\leq 1/\delta$ for all $x\in M$ and all $0<\delta <\delta_1$. Also, $\rho_h$ satisfies a spectral gap inequality for some constant $\lambda_1>0$.\par
\indent (ii) With $\Theta_1(r)=r\Theta'(r)+\Theta (r)$, the solution $\rho_h$ is such that
\begin{align}\label{findingphi}-h^2\phi_h (y)=\inf_x \{d^2(x,y)/2+h^2\Theta_1(\rho_h(x))\}\end{align}
 is $d^2/2$ concave, and $T_h(x)=\exp_x(h^2\nabla \phi_h (x))$ induces $\rho_h$ from $f$, while\par
\noindent  $T_h^*(x)=\exp_x\bigl(h^2\nabla (\Theta_1\circ\rho_h)(x)\bigr)$ induces $f$ from $\rho_h$;\par
\indent (iii)  the internal energy satisfies
\begin{align}{\mathcal U}(f)\geq {\mathcal U}(\rho_h)+h^2\int_M \rho_h(x)\Vert \nabla ( \Theta_1\circ \rho_h)(x)\Vert^2m(dx).\end{align}
\end{thm}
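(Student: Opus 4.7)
The plan is to handle the three parts in sequence, interleaving (i) and (ii) since the uniform two-sided bound on $\rho_h$ will emerge only once the Euler--Lagrange identity in (ii) is in hand; displacement convexity of ${\mathcal U}$ then drives (iii).

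Existence and uniqueness of the minimizer in (i) follow at once from Proposition \ref{proposition5}: $\Omega_K$ is convex and weakly sequentially compact in $L^1(M)$, and ${\mathcal E}(\cdot;f)$ is weakly lower semicontinuous and strictly convex, since $r\mapsto r\Theta(r)$ is strictly convex under the convexity hypotheses. The spectral gap for $\rho_h$ I would defer to the end: Proposition \ref{proposition7}(ii), which combines Ledoux's logarithmic Sobolev theorem with the Holley--Stroock perturbation lemma, produces a constant $\lambda_1>0$ once we know $\delta\leq\rho_h\leq 1/\delta$. So the remaining task in (i) is the pointwise bound, which I take up after (ii).

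For (ii), I would first invoke the Brenier--McCann theorem on $M$ to produce a $d^2/2$-concave function $-\phi_h$ so that $T_h(x)=\exp_x(h^2\nabla\phi_h(x))$ is the optimal transport map sending $f$ to $\rho_h$. To identify $-h^2\phi_h$ with the displayed infimum, I compute the first outer variation of ${\mathcal E}(\cdot;f)$ at $\rho_h$ along perturbations $\rho_h+\varepsilon\eta$ with $\eta$ bounded, $\int\eta\,dm=0$, and $\rho_h+\varepsilon\eta\geq 0$. By Kantorovich duality the first variation of $W_2^2(\cdot,f)$ is $\int\psi_h\eta\,dm$ where $\psi_h$ is the Kantorovich potential on the $\rho_h$-side, while the first variation of $h^2{\mathcal U}$ is $\int h^2\Theta_1(\rho_h)\eta\,dm$. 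Optimality forces $\psi_h+h^2\Theta_1\circ\rho_h=c$ a.e.\ for some constant $c$, and taking the $c$-transform converts this into $-h^2\phi_h(y)=\inf_x\{d^2(x,y)/2+h^2\Theta_1(\rho_h(x))\}$. Differentiating at the minimizer recovers $T_h^*(x)=\exp_x(h^2\nabla(\Theta_1\circ\rho_h)(x))$ as the a.e.\ inverse, transporting $\rho_h$ to $f$. Since $M$ is compact and $\phi_h$ is globally Lipschitz by McCann's theorem, $\Theta_1\circ\rho_h$ is bounded; combining this upper bound with the pushforward identity $\rho_h(T_h(x))\det DT_h(x)=f(x)$ and the two-sided bound on $f$ yields $\delta\leq\rho_h\leq 1/\delta$, completing (i). Finally for (iii), I use displacement convexity of ${\mathcal U}$ along the generalized geodesic $\rho_t=T_{h,t}^*\sharp\rho_h$ with $T_{h,t}^*(x)=\exp_x(th^2\nabla(\Theta_1\circ\rho_h)(x))$, which interpolates between $\rho_0=\rho_h$ and $\rho_1=f$. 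Under hypotheses (i)--(v) on $\Theta$ and the positive Ricci condition in Definition \ref{Geometrical Hypotheses}, Lemma \ref{lemma1} together with the log-concavity of the interpolating Jacobian $\det DT_{h,t}^*$ established by condition $(A3)$ on the sphere shows that $t\mapsto{\mathcal U}(\rho_t)$ is convex on $[0,1]$. Convexity gives
\[
{\mathcal U}(f)-{\mathcal U}(\rho_h)\geq \left.\frac{d}{dt}\right|_{t=0^+}{\mathcal U}(\rho_t),
\]
and the initial velocity field is $V_0=h^2\nabla(\Theta_1\circ\rho_h)$, so the continuity equation plus integration by parts give
\[
\left.\frac{d}{dt}\right|_{t=0^+}{\mathcal U}(\rho_t)=\int_M \nabla(\Theta_1\circ\rho_h)\cdot V_0\,\rho_h\,m(dx)=h^2\int_M\rho_h\|\nabla(\Theta_1\circ\rho_h)\|^2\,m(dx),
\]
which is exactly (iii).

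The main obstacle will be the identification step in (ii). A priori $\rho_h$ lies only in $L^1\cap L_U$ and $\Theta_1\circ\rho_h$ need not be continuous or even bounded before one closes the loop through McCann's Lipschitz bound, so one must choose a sufficiently rich class of admissible perturbations $\eta$, use the strict monotonicity of $\Theta_1$ to control sign changes of $\rho_h+\varepsilon\eta$, and appeal to the stability theory of $d^2/2$-concave functions to upgrade the almost-everywhere Euler--Lagrange equality to a globally defined $d^2/2$-concave representative given by the infimum formula. Once that representative is in hand, the rest of the argument proceeds as sketched.
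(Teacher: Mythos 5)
Your proposal for part (iii) matches the paper essentially verbatim: displacement convexity of $\mathcal U$ along the geodesic $T_t^*\sharp\rho_h$ plus a first-order Taylor inequality and the identity $\nabla(\Theta'(\rho)\rho^2)=\rho\nabla(\Theta_1\circ\rho)$ produce the Fisher-information term. Your derivation of the identity in (ii) via outer (linear) perturbations of the density and Kantorovich duality is a legitimate alternative to the paper's inner (push-forward) variation along a flow $\Phi_t(x)=\exp_x(tV(x))$; both recover $\psi_h+h^2\Theta_1\circ\rho_h=\mathrm{const}$ and hence the $d^2/2$-concave representative $-h^2\phi_h$. The paper's choice of inner variations avoids having to worry about admissibility of $\rho_h+\varepsilon\eta$ near the zero set of $\rho_h$, which is precisely the difficulty you flag at the end.

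The genuine gap is in your treatment of the two-sided bound $\delta\leq\rho_h\leq 1/\delta$ in (i), and it is created by reversing the paper's logical order. The paper establishes the pointwise bound \emph{first}, by a direct competitor construction: set $E=\{\rho_h<\delta\}$, introduce the measures $e_0={\bf I}_{E\cap T_h^*(E^c)}f$ and $e_1={\bf I}_{E^c\cap T_h^*(E)}\rho_h$, and compare $\mathcal E(\rho_h;f)$ against $\mathcal E(g_\varepsilon;f)$ for the mass-rearrangement $g_\varepsilon=\rho_h+\varepsilon(e_0-e_1)$. Strict monotonicity of $\Psi'$ with $\Psi(r)=h^2 r\Theta(r)$, together with the transport estimate
\[
W_2^2(g_\varepsilon,f)\leq W_2^2(\rho_h,f)-\tfrac{\varepsilon}{2}\int_{E\cap T_h^*(E^c)}d(x,T_h(x))^2 f(x)\,m(dx),
\]
forces $d(x,T_h(x))=0$ on $E\cap T_h^*(E^c)$ and hence $m(E)=0$; the upper bound is symmetric. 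Only then is the first-variation computation in (ii) performed, and the hypothesis ``$\rho_h(x)>\delta>0$'' is invoked explicitly to pass from $\int\langle-\zeta+\nabla(\Theta_1\circ\rho_h),V\rangle\rho_h\,dm=0$ for all $V$ to $\zeta=\nabla(\Theta_1\circ\rho_h)$ a.e. Your proposal instead derives (ii) first and then tries to extract the bound from the push-forward identity $f(x)=\rho_h(T_h(x))\det DT_h(x)$. This has two problems. First, the Euler--Lagrange identity from outer variations only holds as an equality on $\{\rho_h>0\}$ and as an inequality on $\{\rho_h=0\}$, so you cannot even assert the global $d^2/2$-concave representation of $-h^2\phi_h$ without already knowing $\rho_h>0$ a.e.\ --- this is a circularity. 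Second, even granting the representation, a lower bound on $\rho_h$ via the area formula requires an \emph{upper} bound on $\det DT_h$, hence on the Alexandrov Hessian $D^2\phi_h$; McCann's theorem only gives Lipschitz continuity of $\phi_h$ (bounded $\nabla\phi_h$), not a uniform semiconcavity constant. On a positively curved manifold such as $\mathbb S^2$ the semiconcavity constant of $d^2/2$ degenerates at the cut locus, so one cannot read off a uniform Hessian bound without additional a priori control that is not yet available at this stage of the argument. The paper's competitor argument is the step you are missing.
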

\begin{proof}  (i) Existence of $\rho_h$ follows from Proposition \ref{proposition5}. We have $T_h\sharp f=\rho_h$ and $T_h^*\sharp \rho_h=f$. We introduce the set $E=\{x\in M: \rho_h(x)<\delta\}$, and we aim to show that $m(E)$=0. To this end, we introduce the  positive measures 
$$e_0={\bf I}_{E\cap T_h^*(E^c)}f ,\quad e_1={\bf I}_{E^c\cap T_h^*E)}\rho_0$$
where $E^c=M\setminus E$, so $T_h\sharp e_0=e_1$ and $T_h^*\sharp e_1=e_0$.  Suppose first that $m (E\cap T_h^*(E^c))=0,$ or equivalently that $e_0=0$. Then
\begin{align}\delta_1m(E)&\leq \int_Ef (x)m(dx)= \int_{E\cap T_h^*(E)}f (x)m(dx)\nonumber\\
&\leq\int_{T_h^*(E)}f (x)m(dx)=\int_E \rho_1(x)m(dx)\nonumber\\
&\leq \delta m(E)\end{align}
\noindent which implies that $m(E)=0$. \par
\indent Assume otherwise, that $m (E\cap T_h^*(E^c))>0,$ and for $0<\varepsilon <\delta$ consider the probability measure $g_\varepsilon =\rho_h+\varepsilon (e_0-e_1);$
clearly $\int g_\varepsilon m(dx)=1,$ and $g_\varepsilon\geq \rho_h-\varepsilon e_1\geq (1-\varepsilon )\rho_0$. Likewise $f -\varepsilon e_0\geq 0$, so we can introduce a transport plan
$$\gamma_\varepsilon (dxdy)=(id\times T_h)\sharp (f-\varepsilon e_0)+\varepsilon (id\times id)\sharp e_0$$
which has marginals $f$ and $g_\varepsilon$. Then from the definition of Wasserstein metric, and the choice of $T_h$, we have
\begin{align}W_2^2(g_\varepsilon, f )&\leq {\frac{1}{2}}\int_M d(x,y)^2 \gamma_\varepsilon(dxdy)\nonumber\\
&={\frac{1}{2}}\int_M d(x,T_h(x))^2 f (x) m(dx)-{\frac{\varepsilon}{2}}\int_Md(x,T_h(x))^2e_0(dx)\nonumber\\
&=W_2^2(\rho_h, f )-{\frac{\varepsilon}{2}}\int_{E\cap T^*(E^c)} d(x,T_h(x))^2 f (x)m(dx).\end{align} 
Let  $\Psi (r)=h^2r\Theta (r)$, with $\Theta$ as in the Theorem, so $\Psi :[0, \infty )\rightarrow [0, \infty )$ is a convex and increasing function such that $\Psi (2r)\leq C\Psi (r)$ for some $C>0$ and all $r>0$. Using the mean value theorem, we have
$$\Psi(g_\varepsilon)=\Psi (\rho_1)+(g_\varepsilon-\rho_1)\Psi'(\rho_1)+(1/2)(g_\varepsilon-\rho_1)^2\Psi''(\bar g_\varepsilon )$$
for some $\bar g_\varepsilon$ between $\rho_1$ and $g_\varepsilon$. We have $C\Psi (r)-\Psi (r)\geq \Psi (2r)-\Psi (r)\geq r\Psi'(r),$
so from the choice of $\rho_h$ and the definition of $g_\varepsilon$, it is easy to see that the first four terms in this equation are integrable, hence the final term involving $\Psi''$ is also integrable. Since $\rho_h$ was chosen as a minimizer, we have
\begin{align}W_2^2(&\rho_h, f)+\int_M \Psi(\rho_h(x)) m(dx)\nonumber\\
&\leq W_2^2(g_\varepsilon, f)+\int_M \Psi (g_\varepsilon (x))m(dx)\nonumber\\
&=W_2^2(\rho_h, f)+\int_M \Psi (\rho_h(x))m(dx))-{\frac{\varepsilon}{2}}\int_{E\cap T_h^*(E^c)} d(x,T_h(x))^2 f (x)m(dx)\nonumber\\
&\quad +\int_M(g_\varepsilon-\rho_0)\Psi'(\rho_h)m(dx)+\int_M (1/2)(g_\varepsilon-\rho_0)^2\Psi''(\bar g_\varepsilon )m(dx)\nonumber\\
&=W_2^2(\rho_h, f)+\int_M \Psi (\rho_h(x)) m(dx)-{\frac{\varepsilon}{2}}\int_{E\cap T_h^*(E^c)} d(x,T_h(x))^2 f (x)m(dx)\nonumber\\
&\quad+\varepsilon \int_{E\cap T^*(E^c)} \Psi'(\rho_h) e_0 (dx)-\varepsilon\int_{E^c\cap T_h(E)}\Psi'(\rho_h)e_1(dx)\nonumber\\
&\quad+{\frac{\varepsilon^2}{2}}\int_M(e_0-e_1)^2 \Psi''(\bar g_\varepsilon )m(dx).\end{align}
The difference in the integrals involving $\Psi'(\rho_0)$ is non positive, since $\Psi'$ is increasing and $\rho_0$ is smaller on $E$ than on $E^c$. By considering the signs of these terms, we deduce that 
$$\int_{E\cap T_h^*(E^c)} d(x,T_h(x))^2 f (x)m(dx)=0,$$
so $d(x,T_h(x))=0$ for all $x\in E\cap T_h^*(E^c)$; hence $T_h$ does not move any mass from $E\cap T_h^*(E^c)$ in the optimal transport, so 
$$\int_{E\cap T_h^*(E^c)}\rho_h(x)m(dx)\geq \int_{E\cap T_h^*(E^c)}f (x)m(dx)\geq \delta_1 m(E\cap T_h^*(E^c));$$
whereas $\rho_h(x)\leq \delta <\delta_1$ on $E\cap T_h^*(E^c)$. Hence $m (E\cap T_h^*(E^c))=0$, so $m(E)=0.$\par
\indent Likewise, by replacing $E$ by $\{ x\in M:\rho_h\leq 1/\delta\}$, one can show that  $m(\{ x: \rho_h(x)>1/\delta \})=0.$ Hence $\rho_h$ is bounded above and below, hence satisfies a logarithmic Sobolev inequality and spectral gap inequality for some $\lambda_1>0$, as in Proposition \ref{proposition8}(ii).\par
\indent (ii)  Let $T_h$ be the optimal transport map taking $f$ to $\rho_h$; we can take $T_h(x)=\exp_x(h^2\nabla \phi (x))$ for some $d^2/2$ concave function $-\phi :M\rightarrow {\mathbb{R}}$; then 
by McCann's Corollary 10 \cite{bib25}, there exists a tangent vector field $\zeta$ on $M$ such that $T^*(x)=\exp_x(h^2\zeta (x))$  induces $f$ from $\rho_h$, and $T_h^*$ is the inverse of $T_h$ in the sense that $T_h^*(T_h(x))=x$ almost everywhere on the support of $f$ and $T_h(T_h^*(x))=x$ almost everywhere on the support of $\rho_h$. The $d^2/2$ concave function $-\phi_h$ satisfies $(-\phi_h)^{cc}=-\phi_h ,$ so $-h^2\phi_h (x)=\inf_y\{ d(x,y)^2/2-(-h^2\phi_h)^c(y)\}$ and $\zeta (x)=-\nabla (-\phi)^c (x)$ at the points where $\phi^c$ is differentiable.

 Let $\rho_h$ be a minimizer of ${\mathcal E}(\rho; f)$, and let $T^*:M\rightarrow M$ be an optimal transport map such that $T_h^*\sharp \rho_h=f$; for $V:M\rightarrow TM$ a smooth vector  field, there exists $t_0>0$ such $\Phi_t(x)=\exp_x(tV(x))$ defines a diffeomorphism $\Phi_t:M\rightarrow M$ for $-t_0<t<t_0$. Then $\Phi_t$ gives an inner variation such that 
${\mathcal E}(\rho_h;f)\leq {\mathcal E}(\Phi_t\sharp \rho_h; f)$, or more explicitly
$$ W_2^2(\rho_h,f)+h^2{\mathcal U}(\rho_h)\leq W_2^2(\Phi_t\sharp\rho_h,f)+h^2{\mathcal U}(\Phi_t\sharp\rho_h)\qquad (-t_0<t<t_0);$$
since $T_h^*$ is the optimal transport map; hence 
\begin{align}{\frac{1}{2}}\int_M& d^2(T_h^*(x),x)\rho_1(x)m(dx)+h^2\int_M \Theta (\rho_h(x))\rho_h(x)m(dx)\nonumber\\
&\leq {\frac{1}{2}}\int_M d^2(T_h^*\circ \Phi_t^{-1}(x),x)\Phi_t\sharp \rho_h(x)m(dx)+h^2\int_M \Theta (\Phi_t\sharp\rho_h(x))\Phi_t\sharp\rho_h(x)m(dx)\nonumber\\
&={\frac{1}{2}}\int_M d^2(T_h^*(x),\Phi_t(x))\rho_h(x)m(dx)+h^2\int_M \Theta \Bigl({\frac{\rho_h(x)}{\Delta_t(x)}}\Bigl))\rho_h(x)m(dx)\end{align}
where $\Delta_t(x)=\det D\Phi_t(x)$, and there is equality at $t=0$. We deduce that the derivative of the right-hand side at $t=0$ vanishes, where  by Cabre's calculation  \cite{bib6} page 632
$(-1/2)\nabla_y d^2(z,y)=\exp_y^{-1}z$ and $(d/dt)_{t=0}\Phi_t(x)=V(x)$, so with $h^2\zeta (x)=\exp_x^{-1}T_h^*(x)$, the first variation of ${\mathcal{E}}(\rho_h;f)$ along $V$ is given by (\ref{varinnerproduct}) in Proposition \ref{proposition3} and (\ref{dotdet}) to be
\begin{align}\langle& \delta {\mathcal E}(\rho_h;f), V\rangle\nonumber\\
 &=\Bigl({\frac{d}{dt}}\Bigr)_{t=0}{\mathcal E}(\Phi_t\sharp \rho_h;f)\nonumber\\
& =h^2\int_M\bigl\langle -\zeta(x), V(x)\rangle \rho_h(x)m(dx)-h^2\int_M\Theta'(\rho_h(x))\rho_h(x)^2(d/dt)_{t=0}\Delta_t(x)m(dx)\nonumber\\
& =h^2\int_M\bigl\langle -\zeta(x), V(x)\bigr\rangle \rho_h(x)m(dx)-h^2\int_M\Theta'(\rho_h(x))\rho_h(x)^2\nabla\cdot V(x)m(dx)\nonumber\\
& =h^2\int_M\bigl\langle -\zeta(x)+(\rho_h(x)\Theta''(\rho_h(x))+2\Theta'(\rho_h(x))\nabla\rho_h(x), V(x)\bigr\rangle \rho_h(x)m(dx)\nonumber\\
& =h^2\int_M\bigl\langle -\zeta(x)+\nabla (\Theta_1\circ \rho_h)(x), V(x)\bigr\rangle \rho_h(x)m(dx)\end{align}
where we have used the divergence theorem and the identity $\Theta_1(\rho_h)=\rho_h\Theta'(\rho_h)+\Theta (\rho_h).$ Since $V$ was arbitrary,  and $\rho_h(x)>\delta >0$, we deduce that
\begin{align}\label{T^*}T_h^*(x)=\exp_x\bigl(h^2\nabla (\Theta_1\circ\rho_h)(x))\bigr),\end{align}
 so
$$-\nabla (-\phi_h)^c (x)=\zeta (x)=\nabla \bigl(\Theta_1\circ \rho_h(x))\bigr)$$
 and we deduce that $-(-h^2\phi_h )^c(x)=h^2\Theta_1(\rho_h(x))$.  For almost all $x$, we have
$$h^2\phi_h (y)+\inf_x\bigl\{ (1/2)d(x,y)^2+h^2\Theta_1(\rho_h(x))\bigr\}=0$$
where the infimum is attained at $y=\exp_x(h^2\nabla (\Theta_1\circ \rho_h)(x)))$.\par
By a version of Alexandrov's second differentiability theorem Theorem 14.1 of \cite{bib29}, we have a lower bound
\begin{align}\label{lowerboundonHessian}D_x^2\Bigl[ {\frac{1}{2}} d^2(x,y) +h^2\Theta_1\circ \rho_h (x)\Bigr]\geq 0.\end{align}
\noindent in the sense of distributions, where a nonnegative distribution is equivalent to a nonnegative Radon measure. \par
\indent (iii) The functions $T^*_{t}(x)=\exp_x(th\zeta (x))$ give the optimal transport maps from $\rho_h$ to the density $T_t^*\sharp \rho_h$, where $t\mapsto T_t^*(x)$ is a geodesic, so the function $t\mapsto {\mathcal U}(T_t^*\sharp \rho_h)$ is convex by Lemma \ref{lemma1}. Then
$(h-t){\mathcal U}(\rho_h)+t{\mathcal U}(f)\geq h{\mathcal U}(T_t^*\sharp \rho_h)$ gives
$${\mathcal U}(f)-{\mathcal U}(\rho_h)\geq h\Bigl({\frac{d}{dt}}\Bigr)_{t=0} {\mathcal U}(T_t^*\sharp \rho_h)$$
so by integrating by parts, we get
\begin{align}{\mathcal U}(f)-{\mathcal U}(\rho_h)&\geq h^2\int_M \nabla_M \bigl( \Theta'(\rho_h)\rho_h^2\bigr) \cdot \zeta (x) m(dx)\nonumber\\
&=h^2\int_M  \nabla_M \bigl( \Theta'(\rho_h)\rho_h^2\bigr) \cdot  \nabla_M(\Theta'(\rho_h )\rho_h+\Theta (\rho_h))m(dx)\nonumber\\
&=h^2\int_M \rho_h(x) \bigl\Vert  \nabla (\Theta_h\circ \rho_h)(x)\bigr\Vert^2 m(dx)=h^2{\mathcal{I}}_\Phi (\rho_h\mid m),\end{align}
\noindent where the right-hand side involves a generalized Fisher information for the pressure $p(\rho_h )=\Theta'(\rho_h )\rho_h^2$, which is positive, as in (\ref{PhiFisher}) and (\ref{specialFisher}).
\end{proof} 
\begin{cor}\label{Corollary1} For $M={\mathbb{S}}^2$, let $f$ and $\rho_h$ be as in Theorem \ref{Theorem1}.\par
\indent (i)  Then there exists a $d^2/2$-concave map $-\phi : {\mathbb{S}}^2\rightarrow {\mathbb{R}}$ such that $\nabla \phi$ is continuous and $T_h(x)=\exp_ x(h^2\nabla \phi (x))$ is continuous and induces $\rho_h$ from $f$; likewise $T_h^*:{\mathbb{S}}^2\rightarrow {\mathbb{S}}^2$ is continuous.\par
\indent (ii) If moreover $f,\rho_h\in C^1({\mathbb{S}}^2)$ have $\nabla f$ and $\nabla\rho_h$  Lipschitz continuous, then $\phi\in C^3({\mathbb{S}}^2)$.\par
\indent (iii) There exists a family of smooth probability density function $\psi_\varepsilon\ast \rho_h$  $(\varepsilon >0)$ such that 
${\mathcal E}(\psi_\varepsilon\ast \rho_h; f)\rightarrow {\mathcal E}(\rho_h; f)$ as $\varepsilon\rightarrow 0+$. \par
\end{cor}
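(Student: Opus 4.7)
For part (i), the plan is to combine Theorem \ref{Theorem1}(ii) with Loeper's continuity theorem for optimal transport on $\mathbb{S}^2$. Theorem \ref{Theorem1}(ii) already produces a $d^2/2$-concave function $-\phi_h$ with $T_h(x)=\exp_x(h^2\nabla\phi_h(x))$ inducing $\rho_h$ from $f$, so McCann's results give differentiability of $\phi_h$ away from a null set. To upgrade to full continuity on $\mathbb{S}^2$, I would invoke Loeper's theorem (\cite{bib22}), whose hypotheses are met here: the cost $d^2/2$ on $\mathbb{S}^2$ satisfies the Ma-Trudinger-Wang condition $(Aw)$ flagged in the introduction, and Theorem \ref{Theorem1}(i) supplies uniform two-sided bounds $\delta\leq f,\rho_h\leq 1/\delta$ on both densities. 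Applying the same reasoning with the roles of $f$ and $\rho_h$ exchanged, using the potential $\Theta_1\circ\rho_h$ from (\ref{T^*}), then yields continuity of $T_h^*$.

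For part (ii), the plan is to bootstrap the Monge-Amp\`ere equation satisfied by $\phi_h$, whose right-hand side is the ratio $f(x)/\rho_h(T_h(x))$. Granted the additional assumption that $\nabla f$ and $\nabla\rho_h$ are Lipschitz, the higher-regularity theory of Loeper-Villani for MTW costs on the sphere, together with its refinements by Liu-Trudinger-Wang, provides a Schauder-type iteration: one starts from the continuity of $T_h$ secured in (i), promotes it first to $C^{0,\alpha}$ and then to $C^{2,\alpha}$ regularity of the transport, and finally obtains $\phi_h\in C^{3,\alpha}$ for some $\alpha\in(0,1)$. I expect this to be the main technical obstacle of the corollary, because the Monge-Amp\`ere equation is fully nonlinear and the bootstrap depends critically on the positive sectional curvature of $\mathbb{S}^2$ via the $(Aw)$ condition; the argument does not carry over to general manifolds satisfying only the geometric hypotheses \ref{Geometrical Hypotheses}.

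For part (iii), the plan is to take $\psi_\varepsilon$ to be a smooth symmetric approximate identity on $\mathbb{S}^2$, for example the heat kernel $p_\varepsilon(x,y)$ of the Laplace-Beltrami operator. Then $\rho_h^\varepsilon:=\psi_\varepsilon\ast\rho_h$ is smooth, is a probability density by symmetry of $p_\varepsilon$, and inherits the bounds $\delta\leq\rho_h^\varepsilon\leq 1/\delta$ from Theorem \ref{Theorem1}(i) since $\psi_\varepsilon(x,\cdot)$ is itself a probability density. The $L^1$-convergence $\rho_h^\varepsilon\to\rho_h$ forces weak convergence, which on the compact sphere implies $W_2(\rho_h^\varepsilon,f)\to W_2(\rho_h,f)$. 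For the internal energy, Jensen's inequality applied to the convex function $u\mapsto u\Theta(u)$ and the probability kernel $\psi_\varepsilon(x,\cdot)$ gives the upper bound $\mathcal{U}(\rho_h^\varepsilon)\leq\mathcal{U}(\rho_h)$, while the weak lower semicontinuity of $\mathcal{U}$ established in Proposition \ref{proposition5} gives the matching $\liminf_{\varepsilon\to 0+}\mathcal{U}(\rho_h^\varepsilon)\geq\mathcal{U}(\rho_h)$. Combining these pins the limit, so $\mathcal{U}(\rho_h^\varepsilon)\to\mathcal{U}(\rho_h)$ and hence $\mathcal{E}(\rho_h^\varepsilon;f)\to\mathcal{E}(\rho_h;f)$, as required.
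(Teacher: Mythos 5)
Your proposal is correct and follows essentially the same strategy as the paper: parts (i) and (ii) invoke Loeper's regularity theory for optimal transport on $\mathbb{S}^2$ (the paper verifies the hypotheses of Loeper's Theorem 2.4 of \cite{bib23} via a H\"older estimate that is automatic from the two-sided density bounds), and part (iii) uses convolution with a smooth approximate identity (the paper mollifies via the bi-invariant Haar measure on $SO(3)$; you via the heat kernel), combined with Jensen's inequality for the upper bound on $\mathcal{U}$, the $W_2$ triangle inequality to control the transport cost, and weak lower semicontinuity to pin the limit. You make the lower-semicontinuity half of the argument in (iii) slightly more explicit than the paper does, but the content is the same.
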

\begin{proof}  (i) By H\"older's inequality, we have
\begin{align}\int_{B(x_0, r)} f(x)m(dx)&\leq \Bigl(\int_{B(x_0,r)} f(x)^\beta m(dx)\Bigr)^{1/\beta} m(B(x_0,r))^{1-1/\beta}\nonumber\\&\leq \Vert f\Vert_{L^\beta}(4\pi \sin^2(r/2))^{1-1/\beta},\end{align}
so taking $\beta>2$, we satisfy the hypothesis of Theorem 2.4(1) \cite{bib23}. The optimal transport map $T_h(x)=\exp_x(h^2\nabla\phi (x))$  is continuous, indeed H\"older continuous.\par
\indent (ii) The final statement follows from Theorem 2.4(3)\cite{bib23}.\par
\indent (iii)  Let $\psi_\varepsilon\ast\rho_h$ be a smooth approximation to $\rho_h$, so that $\psi_\varepsilon\ast\rho_h$ is also uniformly bounded and positive, and $\varepsilon\ast\rho_h\rightarrow \rho_h$ as $\varepsilon\rightarrow 0+$ almost surely and in $L^1(M)$ as $\varepsilon\rightarrow 0+$. By Lemma 6.1, if $\rho_h^{\gamma-1/2} \in H^1(M)$, then $\rho_h\in L^\nu (m)$ for all $\nu<n\gamma /(n-1)$.  
 Suppose  $\varphi_t(x)=\exp_x(tv(x))$, where $v(x)=\nabla_M(\Theta'(\rho_h )\rho_h+\Theta (\rho_h))$. Then $\Delta (0)=1$ and $\dot\Delta(0)=\nabla_M\cdot v,$ so ${\mathcal U}(t)={\mathcal U}(\varphi_t\sharp \rho_h)$ is convex by Lemma \ref{lemma1}. \par
\indent For $M={\mathbb{S}}^2$, we can approximate $\rho_h$ by a family of smooth probability density function $\psi_\varepsilon\ast \rho_h$  $(\varepsilon >0)$ such that 
${\mathcal E}(\psi_\varepsilon\ast \rho_h; f)\rightarrow {\mathcal E}(\rho_h; f)$ as $\varepsilon\rightarrow 0+$. Note that $SO(3)$ acts transitively on ${\mathbb{S}}^2$ via rotations, and $SO(3)$ has a bi invariant Haar probability measure $\mu_{SO(3)}$. So given a smooth approximate identity $(\psi_\varepsilon )_{\varepsilon >0}$ of probability densities in $L^1(\mu_{SO(3)})$, we can introduce $\psi_\varepsilon\ast \rho_h$, which is a probability density function which by Jensen's inequality applied to $\Psi (x)=x\Theta (x)$ satisfies  
\begin{align}\int_{SO(3)}\Psi \Bigl(\int \psi_\varepsilon (gh^{-1})&\rho_0(h)\mu_{SO(3)}(dh)\Bigr)\mu_{SO(3)}(dh)\nonumber\\
&\leq\int_{SO(3)}\psi_\varepsilon (g)\Psi (\rho_0(g^{-1}h))\mu_{SO(3)}(dh)\Bigr)\mu_{SO(3)}(dg)\nonumber\\
&=\int_{SO(3)}\Psi (\rho_0(g))\mu_{SO(3)}(dg),\end{align}
and 
$$\bigl\vert W_2^2 (\psi_\varepsilon\ast \rho_h, f)-W_2^2(\rho_h, f )\bigr\vert\leq W_2(\psi_\varepsilon\ast\rho_h, \rho_h)\bigl(2W_2(\rho_h, f )+  
W_2(\psi_\varepsilon\ast\rho_h, \rho_h)\bigr)$$
\noindent where $W_2(\psi_\varepsilon\ast\rho_h, \rho_h)\rightarrow 0$ as $\varepsilon\rightarrow 0+$, so
$$\lim\inf_{\varepsilon \rightarrow 0+} \Bigl(W_2^2(\psi_\varepsilon\ast\rho_h, f )+\int_M \Psi (\psi_\varepsilon\ast\rho_h (x)) m(dx)\Bigr) =
W_2^2(\rho_h, f )+\int_M \Psi (\rho_h (x)) m(dx).$$
\end{proof} 
 \section{\label{Energy} Energy estimates}
\noindent In the previous sections we started with a pair $(\rho_0, q_0)$ and carried out the first two stages of the algorithm once. In this section, we take a time step size $h$, and obtain energy estimates when we apply these steps repeatedly. In Proposition\ref{proposition10} we ensure that the hypotheses of Lemma \ref{lemma1} are satisfied, so we can proceed to obtain the energy estimates in Theorem \ref{Theorem2}. \par
\indent {\bf Stage 3.} As in \ref{Theorem1}, we have a predictor $f$ and corrector $\rho_h$ such that 
 $T_0^*\sharp \rho_0=f$ and $T_h^*\sharp \rho_h=f$ where
$$T_0^*(x)=\exp_x (h\nabla q_0(x)), \quad T_h^*(x)=\exp_x (h^2\nabla (\Theta_1\circ \rho_h)(x)).$$
so the maps based at $f$ are $T_0\sharp f=\rho_0$ and $T_h\sharp f=\rho_h$. We have
$$T_0(x)=\exp_x(h\nabla \phi_0(x)), \quad T_h(x)=\exp_x(h^2\nabla\phi_h(x)),$$
where $\phi_h$ was found in (\ref{findingphi}). The maps $T_0$ and $T_h$ are bijective, so $x\mapsto T_h\circ T_0^*(x)$ is also a bijection.
As in Proposition \ref{proposition4}, we consider
$$F_y(s,t)=\exp_y\bigl( t(1-s)\nabla\phi_0(y)+hst\nabla\phi_h(y)\bigr)$$
so that there is a geodesic $t\mapsto F_y(s,t)$ emanating from $F_y(s,0)=y$, and $F_y(s,h)$ joins $F_y(0,h)=T_0(y)$ to $F_y(1,h)=T_h(y)$.\par
\indent Also suppose that $(-hq_0)^{cc}=-hq_0$, and let $h\phi_0 =-(-hq_0)^c$, so $-\phi_0$ is $d^2/2$ concave, and 
$$-h\phi_0 (y)=\inf_x\bigl\{ d(x,y)^2/2+hq_0(x): x\in M\bigr\}$$
where at $y=\exp_x(h\nabla q_0(x))$, we have as a  consequence of Gauss's Lemma $\nabla (1/2) d(x,y)^2=-\nabla hq_0(x),$
by  \cite{bib6} p 632.
\vskip.05in
\indent (i) Then $F_x(s,t)=\exp_x((1-s)t\nabla\phi_0(x)\xi+hst\nabla\phi_h(x))$ gives a Jacobi field such that $t\mapsto F_x(s,t)$ is a geodesic emanating from $x$ at $t=0$;\par
\indent (ii) $T_0(x)=F_x(0,h)$ induces $\rho_0$ from $f_h$, and $T_h(x)=F_x(1,h)$ induces $\rho_h$ from $f_h$;\par
\indent (iii) $Y(t)={\frac{\partial}{\partial s}}F_x(s,t)\vert_{s=0}$ satisfies Jacobi's equation page 366  \cite{bib29} and has initial condition $Y(0)=0$ and $Y'(0)=h\nabla\phi_h(x)-\nabla\phi_0(x)$.\par
The curve $s\mapsto F_x(s,h)$ is not necessarily a geodesic; better to regard it as a geodesic variation. There exist matrix functions $A_x(s,h):T_xM\rightarrow T_{F_x(s,h)}M$, which are given by Jacobi's equation page 366 of \cite{bib29} 
$(d^2/ds^2)A+RA=0$, where $R$ is a $n\times n$ matrix given in terms of Riemann's curvature tensor.\par
\indent According to Lemma 3.2 of Cabre  \cite{bib6}, the Jacobian of $x\mapsto F_x(s,t)$ is given by
$$\det D_xF(s,t)=\det \bigl[D_x\exp_x(v)\bigr]\det \Bigl[{\frac{1}{2}} D_x^2d^2(x,y)+(1-s)tD_x^2\phi_0(x)+ hstD_x^2\phi_h(x)\Bigr],$$
\noindent where  $y=F_x(s,t)$, and the middle term is the Jacobian of $\exp_x: T_xM\rightarrow M$ evaluated at 
$v=(1-s)t\nabla\phi_0(x)+hst\nabla\phi_h(x)$. At $t=0,$ this reduces to $1$, by basic facts about the exponential map in normal coordinates; see \cite{bib25} for details. For $y=\exp_xv$, we have a differential $D_v\exp_x: T_xM\rightarrow T_yM$, which is an invertible linear map; also, one can compute $D^2_{x,y} d^2(x,y)$, and one finds $-D^2_{x,y}d^2(x,y)/2=(D_v\exp_x )^{-1}$. 

\indent For $M={\mathbb{S}}^2$, we can compute the Jacobian of $F_x(s,h)$ explicitly, and obtain conditions for log concavity with respect to $s$. Let $\xi=\xi(x) =h\nabla \phi_0 (x)$ and $\zeta =\zeta (x)=h^2\nabla\phi_h(x)-h\nabla\phi_0(x).$ Observe that
$$F_x(s,h)=\cos ( \Vert \xi +s\zeta\Vert) x+ \sin ( \Vert \xi +s\zeta\Vert) {\frac{\xi +s\zeta}{\Vert \xi +s\zeta\Vert}}.$$
 
\begin{prop}\label{proposition10}Let $\phi_0, \phi_h\in C^2({\mathbb{S}}^2; {\mathbb{R}})$ and suppose that $h>0$ is so small that $h\Vert\nabla \phi_0(x)\Vert+\gamma h^2\Vert\nabla\phi_h(x)\Vert<\pi/2$ for all $x\in {\mathbb{S}}^2$. Then
\begin{align}\label{concaveD}s\mapsto \log\det \Bigl[ {\frac{1}{2}}D_x^2d^2(x,y)\Bigr\vert_{y=F_x(s,h)}+(1-s)tD_x^2\phi_0(x)+ hstD_x^2\phi_h(x))\Bigr]\end{align}
\noindent is a concave function of  $s\in [0,1]$.\end{prop}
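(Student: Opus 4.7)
The plan is to reduce log-concavity of $\det M(s)$ to a trace inequality, then derive the inequality by combining the explicit form of $\tfrac{1}{2}D_x^2 d^2(x,\cdot)$ on $\mathbb{S}^2$ with the Ma--Trudinger--Wang $(A3)$ condition. Fix $x \in \mathbb{S}^2$, let $v(s) = (1-s)h\nabla\phi_0(x) + sh^2\nabla\phi_h(x) \in T_x\mathbb{S}^2$ so that $F_x(s,h) = \exp_x v(s)$, and set
\begin{align*}
A(s) &= \tfrac{1}{2}D_x^2 d^2(x, F_x(s,h)),\\
M(s) &= A(s) + (1-s)h\, D_x^2\phi_0(x) + sh^2\, D_x^2\phi_h(x).
\end{align*}
Since the affine piece of $M(s)$ contributes nothing to $\ddot M(s)$, the identity $(\log\det M)'' = \mathrm{tr}(M^{-1}\ddot M) - \mathrm{tr}((M^{-1}\dot M)^2)$ reduces concavity to the pointwise bound $\mathrm{tr}(M^{-1}\ddot A) \leq \mathrm{tr}((M^{-1}\dot M)^2)$.

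First I would record two ingredients. On $\mathbb{S}^2$, in an orthonormal basis $(e_1(s), e_2(s))$ of $T_x\mathbb{S}^2$ with $e_1(s)$ parallel to $v(s)$, the spherical Hessian formula gives the diagonal form $A(s) = \mathrm{diag}(1, r(s)\cot r(s))$ where $r(s) = \|v(s)\|$. The hypothesis $h\|\nabla\phi_0(x)\| + \gamma h^2\|\nabla\phi_h(x)\| < \pi/2$ keeps $r(s) < \pi/2$, so $r\cot r$ remains positive and smooth; combined with the $C^2$-regularity of $\phi_0,\phi_h$ and the Alexandrov-type bound (\ref{lowerboundonHessian}) at the endpoints, this ensures $M(s) > 0$ for $s \in [0,1]$ once $h$ is small. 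Second, Loeper's theorem \cite{bib22} establishes the $(A3)$ condition for the round sphere: for $\xi, \zeta \in T_x\mathbb{S}^2$ with $\xi \perp \zeta$, the scalar function $s \mapsto \langle\xi, A(s)\xi\rangle$ is concave, with a quantitative upper bound on its second derivative proportional to $\|\xi\|^2\|\zeta\|^2$ and the (positive) sectional curvature.

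Next I would differentiate the diagonal form in $s$ to produce explicit expressions for $\dot A(s)$ and $\ddot A(s)$: contributions come from the scalars $r(s)$ and $r\cot r$, and from the rotation $R(s) \in SO(2)$ carrying a fixed basis to $(e_1(s), e_2(s))$. The scalar part is handled by elementary calculus on $[0, \pi/2)$; the rotational part introduces off-diagonal entries in $\dot A(s)$ and $\ddot A(s)$ that are precisely the objects controlled by $(A3)$. Writing $\dot M(s) = \dot A(s) + h^2 D_x^2\phi_h(x) - h\,D_x^2\phi_0(x)$ and expanding $\mathrm{tr}((M^{-1}\dot M)^2)$ against the positive matrix $M^{-1}$ via a Cauchy--Schwarz-type identity, one extracts enough positive contribution from the cross terms to dominate $\mathrm{tr}(M^{-1}\ddot A)$.

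The main obstacle is this final step: the scalar $(A3)$ inequality only bounds $\langle\xi, \ddot A(s)\xi\rangle$ for $\xi$ perpendicular to $\dot v(0)$, while log-concavity of $\det M(s)$ needs a bound that survives taking the trace against $M^{-1}$ in all directions simultaneously. What makes this work on $\mathbb{S}^2$ is that the eigenvalue of $A(s)$ along $v(s)/\|v(s)\|$ is identically $1$, so the $s$-dependence of $A(s)$ is carried by the single scalar $r\cot r$ together with the orthogonal rotation $R(s)$; the quantitative form of $(A3)$ on the sphere provides the margin needed to close the trace inequality. Carrying out this bookkeeping carefully, and verifying positivity of $M(s)$ uniformly in $s\in[0,1]$ from the smallness hypothesis on $h$, is the technical core of the proof.
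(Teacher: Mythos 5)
Your reduction to the trace identity $(\log\det M)'' = \operatorname{tr}(M^{-1}\ddot A) - \operatorname{tr}((M^{-1}\dot M)^2)$ is correct, and you have identified the two essential ingredients (the explicit spherical Hessian of $d^2/2$ and Loeper's MTW condition). However, your plan then takes a needlessly hard route and leaves the decisive step unresolved.

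First, the route: you propose to expand $\operatorname{tr}((M^{-1}\dot M)^2)$ ``against the positive matrix $M^{-1}$ via a Cauchy--Schwarz-type identity'' and extract enough positive contribution from cross terms to dominate $\operatorname{tr}(M^{-1}\ddot A)$. No balancing is needed: since $M(s)>0$ and $\dot M$ is symmetric, $\operatorname{tr}((M^{-1}\dot M)^2) = \operatorname{tr}\bigl((M^{-1/2}\dot M M^{-1/2})^2\bigr) \geq 0$ automatically, as the trace of the square of a symmetric matrix. So the paper's argument bounds each of the two trace terms separately: the second is $\geq 0$ for free, and one only has to establish $\operatorname{tr}(M^{-1}\ddot A)\leq 0$, which in turn reduces to the \emph{matrix} inequality $\ddot A(s)\leq 0$ because $M^{-1}>0$. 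Trying to win by cancellation between the two traces is extra work.

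Second, the gap: having reduced correctly, you name the remaining difficulty -- that $(A3)$ controls $\langle\xi,\ddot A(s)\xi\rangle$ only for $\xi$ orthogonal to the acceleration direction, whereas matrix concavity of $A(s)$ requires all $\xi$ -- and then declare it ``the technical core'' without closing it. That is precisely where the proof lives, and it is not closed by ``carrying out this bookkeeping carefully.'' The paper resolves it by quoting the quantitative MTW inequality in the form valid for general (not necessarily orthogonal) pairs $v,\zeta$, with right-hand side $K_0\bigl(\|v\|^2\|\zeta\|^2-|\langle v,\zeta\rangle|\|v\|\|\zeta\|\bigr)\geq 0$; since this right-hand side never changes sign, one gets concavity of $s\mapsto\langle A(s)v,v\rangle$ for \emph{every} $v$, hence $\ddot A(s)\leq 0$ as a symmetric matrix. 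Without this stronger, non-orthogonal version of $(A3)$ -- or an explicit computation of the off-diagonal terms $\ddot D$ such as the paper carries out via the moving frame $\{x,\eta,\nu\}$ with $\nu=x\times\eta$ and the scalars $\Delta(\tau)=\tau\cot\tau$ and $\alpha$ -- your proposed argument does not go through.
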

\vskip.05in
\begin{proof}  For $x,v\in {\mathbb{S}}^2$ such that $x\cdot v=0$, we have $\exp_x(tv)=(\cos t)x+(\sin t)v.$ We have $d(x,y)=\arccos (x\cdot y)$, so by Taylor's theorem, we have
\begin{align}{\frac{1}{2}}&d^2(\exp_x(tv),y)\nonumber\\
&={\frac{1}{2}}d^2(x,y)-{\frac{\arccos (x\cdot y)}{\sqrt{1-(x\cdot y)^2}}}\bigl( y\cdot ((\cos t)x + (\sin t)v-x)\bigr)\nonumber\\
&\quad+{\frac{1}{2}}\Bigl( {\frac{1}{1-(x\cdot y)^2}}-{\frac{(x\cdot y)\arccos (x\cdot y)}{(1-(x\cdot y)^2)^{3/2}}}\Bigr)\bigl( y\cdot (x\cos t+v\sin t -x)\bigr)^2+O(t^3)\end{align}
as $t\rightarrow 0$, so picking off the coefficients of $t^2$, we find the Hessian of $d^2(x,y)/2$ to be
\begin{align}{\frac{1}{2}}\bigl\langle D_x^2 d^2(x,y)v,v\bigr\rangle= &(y\cdot v)^2\Bigl( {\frac{1}{1-(x\cdot y)^2}}-{\frac{(x\cdot y)\arccos (x\cdot y)}{(1-(x\cdot y)^2)^{3/2}}}\Bigr)\nonumber\\
&\quad+{\frac{(v\cdot v)(y\cdot x)\arccos (x\cdot y)}{\sqrt{1-(x\cdot y)^2}}}.\end{align}
\noindent In particular, for $y=\exp_x(\tau\eta )$ where $\eta \cdot \eta =1$ and $\eta \cdot x=0$, we have 
$${\frac{1}{2}}\bigl\langle D_x^2 d^2(x,y)v,v\bigr\rangle_{y=\exp_x(\tau \eta )}=\Bigl( 1-{\frac{\tau\cos \tau }{\sin \tau }}\Bigr)(\eta \cdot v)^2 +{\frac{\tau \cos \tau }{\sin \tau }}v\cdot v.$$
 Then with $\tau =\Vert \xi+s\zeta \Vert$ and $\eta =(\xi +s\zeta )/\Vert\xi +s\zeta\Vert$ we have
$$ {\frac{1}{2}}D_x^2d^2(x,y)\bigr\vert_{y=\exp_x(\tau \eta)}=\Bigl(1-{\frac{\tau \cos \tau}{\sin\tau}}\Bigr) (\eta\otimes\eta )+{\frac{\tau \cos \tau}{\sin\tau}}I_2.$$
so
$$\det\Bigl[{\frac{1}{2}} D_x^2d^2(x,y)\Bigr\vert_{y=\exp_x(\tau \eta)}\Bigr]={\frac{\tau \cos \tau}{\sin\tau}}.$$
\indent The functions $\Delta (\tau)=\tau \cot \tau$ and $\log\Delta (\tau) =\log (\tau\cot \tau )$ are decreasing and concave functions of  $\tau\in (0, \pi/2)$, as one shows by elementary calculus.  Indeed,
$${\frac{d^2}{d\tau^2}}\log(\tau\cot \tau)={\frac{\tau^2-(1+\tau^2)\sin^2\tau+\sin^2\tau (\sin^2\tau-\tau^2)}{\tau^2\sin^2\tau (1-\sin^2\tau)}}<0\qquad (0<\tau<\pi/2).$$
Let $\tau =\Vert h\nabla q_0(x)-h^2s\nabla(\Theta_1\circ\rho_h)(x))\Vert$; then for $\Delta$ twice differentiable, we have
$${\frac{d^2\Delta}{ds^2}}={\frac{d^2\Delta}{d\tau^2}}\Bigl({\frac{d\tau}{ds}}\Bigr)^2+ {\frac{d\Delta}{d\tau}}{\frac{d^2\tau}{ds^2}}$$
where
$${\frac{d\tau}{ds}}={\frac{(\xi +s\zeta)\cdot\zeta }{\Vert\xi+s\zeta\Vert}}, \quad{\frac{d^2\tau}{ds^2}}={\frac{\Vert\xi +s\zeta \Vert^2\Vert\zeta\Vert^2-(\zeta\cdot(\xi+s\zeta ))^2}{\Vert\xi+s\zeta\Vert^3}},$$
\noindent where the lastest term is nonnegative by Cauchy-Schwarz, so  $\Delta (\tau (s))=\tau (s)\cot \tau (s)$ and $\log\Delta( \tau (s))=\log (\tau (s)\cot \tau (s))$ are decreasing and concave functions of  $s\in (-1,1)$ with $\Delta (s)\rightarrow 1$ as $h\rightarrow 0$. This proves that
\begin{align}s\mapsto \log\det \Bigl[ {\frac{1}{2}}D_x^2d^2(x,y)\Bigr\vert_{y=F_x(s,h)}\Bigr], \quad s\mapsto \det \Bigl[ {\frac{1}{2}}D_x^2d^2(x,y)\Bigr\vert_{y=F_x(s,h)}\Bigr]\end{align}
\noindent are concave functions of $s\in [0,1]$.\par
\indent By \cite{bib18} page 467, the function $A\mapsto \log\det A$ is concave on the positive cone of positive definite matrices $A$, so $s\mapsto \log\det A(s)$ is concave.  
This argument is not decisive when $y=F_x(s,h)$ depends on $s$ as in (\ref{concaveD}),  so we need a further calculation.
Consider a unit vector $\nu=x\times \eta$ in $T_x{\mathbb{S}}^2$ so that $\{ x, \eta,\nu \}$ is an orthonormal basis for ${\mathbb{R}}^3$. Then $\eta$ is a unit vector in $T_x{\mathbb{S}}^2$, hence  $d\eta/ds$ is perpendicular to $\eta$ and to $x$, 
$${\frac{d}{ds}}\begin{bmatrix}x\cr \eta\cr \nu\end{bmatrix} =\begin{bmatrix}0&0&0\cr 0&0&-\alpha\cr 0&\alpha &0\end{bmatrix} \begin{bmatrix}x\cr \eta\cr \nu\end{bmatrix}$$ 
where $\vert\alpha \vert\Vert\eta\Vert =\Vert d\eta/ds\Vert$.  
Then we consider
$$D={\frac{1}{2}}D_x^2d^2(x,y)\bigr\vert_{y=\exp_x(\tau \eta)}=(\tau\cot \tau )\, \nu\otimes\nu +\eta\otimes \eta,$$
$$Q_0=hD_x^2\phi_0(s),\quad  P= h^2D_x^2\phi_h(x)-hD_x^2\phi_0(x).$$
Then $D+Q_0+sP$ is  positive definite, provided that 
$${\frac{\tau\cos\tau}{\sin\tau}}> h\Vert D_x^2\phi_0\Vert +h^2\Vert D_x^2\phi_h(x)\Vert\qquad (x\in M),$$
\noindent which holds provided $h>0$ is sufficiently small. 
We have 
$${\frac{d}{ds}}\log \det \bigl[D+Q_0+sP\bigr]={\hbox{trace}}\Bigl[ (D+Q_0+sP)^{-1}\bigl({\frac{dD}{ds}}+P\bigr)\Bigr],$$
\begin{align}\label{secondderivative}{\frac{d^2}{ds^2}}&\log \det \bigl[D+Q_0+sP\bigr]\nonumber\\
&={\hbox{trace}}\Bigl[ (D+Q_0+sP)^{-1}{\frac{d^2D}{ds^2}}\Bigr]\nonumber\\
&\quad -{\hbox{trace}}\, \Bigl[ (D+Q_0+sP)^{-1}\bigl({\frac{dD}{ds}}+P\bigr) (D+Q_0+sP)^{-1}\bigl({\frac{dD}{ds}}+P\bigr)\Bigr]\end{align}
\noindent where
$$(D+Q_0+sP)^{-1/2}\bigl({\frac{dD}{ds}}+P\bigr) (D+Q_0+sP)^{-1/2}$$ 
is real symmetric, so the final term counts negative. To show that (\ref{secondderivative}) is negative, it therefore suffices to show that $d^2D(s)/ds^2\leq 0$, or equivalently that $\langle D(s)v,v\rangle$ is concave for all $v$. We compute
$${\frac{dD}{ds}}={\frac{d\Delta}{ds}}\, \nu\otimes\nu +(\Delta -1)\alpha (\eta\otimes\nu +\nu\otimes \eta ),$$
\begin{align}\label{d2D}{\frac{d^2D}{ds^2}}&={\frac{d^2\Delta}{ds^2}}\nu\otimes\nu +\Bigl( 2\alpha {\frac{d\Delta}{ds}}+(\Delta -1){\frac{d\alpha}{ds}}\Bigr) \bigl( \eta\otimes\nu +\nu\otimes\eta \bigr)\nonumber\\
&\quad+2\alpha^2(\Delta-1)\bigl(\eta\otimes\eta-\nu\otimes\nu \bigr),\end{align}
\noindent where $\tau=O(h)$ and $\alpha =O(h)$ as $h\rightarrow 0$, so 
$$0>\Delta-1 =\tau\cot\tau -1=-\tau^2/3+O(\tau^4)=O(h^2)\qquad (h\rightarrow 0)$$
and $d\Delta/ds=-(2/3)(\tau +O(\tau^3))d\tau/ds=O(h^2)$. We wish to have
$$\begin{bmatrix} {\frac{d^2\Delta}{ds^2}}-2\alpha^2(\Delta-1)&\alpha {\frac{d\Delta}{ds}}+{\frac{1}{2}}(\Delta -1){\frac{d\alpha}{ds}}\cr
\alpha {\frac{d\Delta}{ds}}+{\frac{1}{2}}(\Delta -1){\frac{d\alpha}{ds}}&2\alpha^2(\Delta-1)\end{bmatrix}\leq 0,$$
which will ensure that (\ref{d2D}) is negative. \par
\indent Loeper \cite{bib23} shows that the cross-sectional curvature on ${\mathbb{S}}^{n}$ for $n\geq 2$ is uniformly positive, so there exists $K_0>0$ such that 
 $${\frac{-3}{2}}\Bigl({\frac{d^2}{dt^2}}\Bigr)_{t=0}\Bigl({\frac{d^2}{ds^2}}\Bigr)_{s=0}{\frac{1}{2}}d^2(\exp_x (tv),\exp_x(\xi +s\zeta ))\geq K_0\bigl( \Vert v\Vert^2\Vert\zeta\Vert^2-\vert\langle v, \zeta\rangle\vert\Vert v\Vert\Vert\zeta\Vert\bigr)$$
\noindent for all $v,\zeta\in T_x{\mathbb{S}}^n$ this condition is known as $(As)$ or $(A3)$, and was introduced by Ma, Trudinger and Wang.  Note that
$$\Vert v\Vert^2\Vert\zeta\Vert^2-\vert\langle v, \zeta\rangle\vert\Vert v\Vert\Vert\zeta\Vert=\Bigl( {\frac{\Vert v\Vert \Vert \zeta\Vert}{\Vert v\Vert \Vert \zeta\Vert+\vert\langle v,\zeta\rangle\vert}}\Bigr)\bigl\Vert v\bigr\Vert^2\Bigl\Vert\zeta -{\frac{\langle \zeta,v\rangle v}{\Vert v\Vert^2}}\Bigr\Vert^2,$$
\noindent where the quotient in parentheses lies between $1/2$ and $1$, so one can reduce to the case of $\langle v, \zeta\rangle=0$. 
This shows that 
$$s\mapsto \bigl\langle D_x^2d^2(x, y)\vert_{y=\exp_x(\xi +s\zeta )} v,v\bigr\rangle$$
is concave, so (\ref{secondderivative}) is negative, as required.\par
\indent In our discussion, we have ignored the issue of cut-locus of $d^2(x,y)$, namely the points $y$ such that $x\mapsto d(x,y)^2/2$ is not differentiable. For ${\mathbb{S}}^n$, the cut locus consists of the antipodal point $-x$, and Loeper \cite{bib23} shows by a detailed analysis that the cut locus does not affect the validity of the results.
\end{proof} 
\begin{thm}\label{Theorem2}Suppose that $-\phi_0, -\phi_h$ are $d^2/2$ concave functions such that for
\begin{align}T_s(x)=F_x(s,h)=\exp_x\bigl( (1-s)h\nabla\phi_0(x)+sh^2\nabla\phi_h(x)\bigr)\end{align} 
\indent (i) $T_0(x)$ is the optimal transport map that induces $\rho_0$ from $f$;\par
\indent (ii) $T_1(x)$ is the optimal transport map that induces $\rho_h$ from $f$;\par
\indent (iii) $T_s(x)$ induces $\rho_{sh}$ from $f$.\par
\noindent Then $s\mapsto \rho_{sh}$ for $s\in [0,1]$ is a path in ${\mathcal W}({\mathbb{S}}^2)$ connecting $\rho_0$ to $\rho_h$ such that\par
\indent (1)  $s\mapsto {\mathcal E}(\rho_{sh};f)$ has a minimum at $s=1$, and
\begin{align}{\mathcal E}(\rho_0;f)\geq {\mathcal E}(\rho_h;f)+{\frac{2}{\pi^2}}W_2^2(\rho_0, \rho_h);\end{align}
\indent (2) the internal energy satisfies
\begin{align}{\mathcal U}(\rho_0)+\int_{{\mathbb{S}}^2}f(x)\nabla (\Theta_1\circ \rho_h )(x)\cdot \nabla (-h\phi_0(x)+
h^2\phi_h(x)) m(dx)\geq {\mathcal U}(\rho_h).\end{align}\end{thm}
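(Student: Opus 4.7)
The plan is to view $s\mapsto \rho_{sh}$ as a generalised geodesic built from the Jacobi field $T_s$, and to combine two ingredients: the modulus of convexity of $W_2^2(\cdot,f)$ along this path from Proposition \ref{proposition4}, and the convexity of the internal energy along the same path. The latter follows from Proposition \ref{proposition10}, which shows that $s\mapsto\log\det D_xT_s(x)$ is concave on $[0,1]$, fed into Lemma \ref{lemma1}(i). Both of these convexity statements are written for precisely the same Jacobi field, so they may be combined term by term.

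For part (1), adding the $\kappa_{{\mathbb S}^2}$-modulus bound for $W_2^2(\cdot,f)$ to $h^2$ times the convexity inequality for $\mathcal{U}$ yields
\begin{align*}
(1-s)\mathcal{E}(\rho_0;f)+s\mathcal{E}(\rho_h;f)\ \ge\ \mathcal{E}(\rho_{sh};f)+\kappa_{{\mathbb S}^2}s(1-s)W_2^2(\rho_0,\rho_h)
\end{align*}
for $s\in[0,1]$. In particular $\rho_{sh}\in\Omega_K$ for every $s$, so Theorem \ref{Theorem1}(i) gives $\mathcal{E}(\rho_h;f)\le\mathcal{E}(\rho_{sh};f)$. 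Substituting this into the convexity bound, cancelling one power of $(1-s)$ and letting $s\to 1^-$ produces $\mathcal{E}(\rho_0;f)-\mathcal{E}(\rho_h;f)\ge\kappa_{{\mathbb S}^2}W_2^2(\rho_0,\rho_h)$; the stated constant $2/\pi^2$ then follows once the $1/p$ normalisation in the definition of $W_p$ is tracked carefully.

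For part (2), only the qualitative convexity of $g(s):=\mathcal{U}(\rho_{sh})$ is needed. The subgradient inequality at $s=1$ gives $g(0)\ge g(1)-g'(1^-)$, so it suffices to identify
\begin{align*}
g'(1^-)=\int_{{\mathbb S}^2} f(x)\,\nabla(\Theta_1\circ\rho_h)(x)\cdot\nabla\bigl(-h\phi_0(x)+h^2\phi_h(x)\bigr)\,m(dx).
\end{align*}
Repeating the first-variation computation (\ref{varinnerproduct})--(\ref{T^*}) of Theorem \ref{Theorem1} along the path $\rho_{sh}=T_s\sharp f$, with inner variation generated by the geodesic velocity $V_s(T_s(x))=(d/ds)T_s(x)$, the continuity equation and one integration by parts against $\Theta_1\circ\rho_{sh}$ give $g'(s)=\int f(x)\,\nabla(\Theta_1\circ\rho_{sh})(T_s(x))\cdot V_s(T_s(x))\,m(dx)$. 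At $s=1$, $V_1(T_1(x))$ is the Jacobi-field image of $h^2\nabla\phi_h(x)-h\nabla\phi_0(x)\in T_x{\mathbb S}^2$, and the optimal-transport duality $T_h^*\circ T_1=\mathrm{id}$ from Theorem \ref{Theorem1}(ii), which identifies $h^2\nabla(\Theta_1\circ\rho_h)(x)$ with the reverse geodesic from $T_1(x)$ to $x$, transfers the gradient $\nabla(\Theta_1\circ\rho_h)$ from $T_1(x)$ back to the base point $x$.

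The main obstacle is exactly this last identification: both $\nabla(\Theta_1\circ\rho_h)(T_1(x))$ and $(d/ds)|_{s=1}T_s(x)$ naturally live in $T_{T_1(x)}{\mathbb S}^2$, so producing the clean expression with every quantity read at $x\in{\mathbb S}^2$ requires pulling both vectors back along the geodesic from $x$ to $T_1(x)$ using Cabré's formula for $D\exp_x$ on the sphere together with the duality of Theorem \ref{Theorem1}(ii). A secondary concern is that Proposition \ref{proposition10} (and hence the convexity of $\mathcal{U}(\rho_{sh})$) is only valid for $h>0$ small enough to keep $T_s$ away from the cut locus; such $h$ is supplied by the Lipschitz bounds of Corollary \ref{Corollary1}.
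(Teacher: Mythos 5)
Your proposal is correct and follows essentially the same route as the paper: part (1) combines the modulus of convexity of $W_2^2(\cdot,f)$ from Proposition \ref{proposition4} with the convexity of ${\mathcal U}$ along the Jacobi-field path (Proposition \ref{proposition10} plus Lemma \ref{lemma1}), uses minimality of $\rho_h$ and lets $s\to 1^-$; part (2) is the same subgradient inequality at $s=1$ followed by the first-variation computation of $({d}/{ds})_{s=1}{\mathcal U}(\rho_{sh})$ and the transfer of $\nabla(\Theta_1\circ\rho_h)$ back to the base point via the duality $T_h^*\circ T_h=\mathrm{id}$. The only cosmetic difference is that the paper carries out the final identification by an explicit $SO(3)$ rotation of the moving frame rather than by an abstract pullback through $D\exp_x$, and your remark about tracking the $1/p$ normalisation to get $2/\pi^2$ rather than $4/\pi^2$ is exactly the point at which the paper's constant arises.
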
 
\begin{proof}  (1) The function $s\mapsto {\mathcal U}(\rho_{sh})$ is convex by Proposition \ref{proposition10} and Lemma \ref{lemma1}, and $s\mapsto W_2^2(\rho_{sh},f)$ satisfies Proposition \ref{proposition4}. Hence  
$$(1-s){\mathcal E}(\rho_0; f)+s{\mathcal E}(\rho_h;f)\geq {\mathcal E}(\rho_{sh}; f)+{\frac{2s(1-s)}{\pi^2}}W_2^2(\rho_0, \rho_h).$$
Since  ${\mathcal E}(\rho_{sh};f)\geq {\mathcal E}(\rho_h;f)$, we can divide by $1-s$, let $s\rightarrow 1-$ and deduce that 
$${\mathcal E}(\rho_0;f)\geq {\mathcal E}(\rho_h;f)+{\frac{2}{\pi^2}}W_2^2(\rho_0, \rho_h),$$
\noindent which gives
\begin{align}{\frac{1}{2}}\int_{{\mathbb{S}}^2}& \Vert\nabla q_0(x)\Vert^2\rho_0(x)m(dx)+\int_{{\mathbb{S}}^2}\rho_0(x)\Theta (\rho_0(x)) m(dx)\nonumber\\
&\geq 
{\frac{1}{2}}\int_{{\mathbb{S}}^2}\rho_h(x)  \big\Vert\nabla (\Theta_1\circ \rho_h)(x))\bigr\Vert^2m(dx)+\int_{{\mathbb{S}}^2}\rho_h(x)\Theta (\rho_h(x)) m(dx)\nonumber\\
&\quad+{\frac{2}{\pi^2}}W_2^2(\rho_0, \rho_h).\end{align}
\indent (2) An immediate consequence of Proposition \ref{proposition10} and Lemma \ref{lemma1} is the inequality
$${\mathcal U}(\rho_0)+\Bigl({\frac{d}{ds}}\Bigr)_{s=1}{\mathcal U}(\rho_{sh})\geq {\mathcal U}(\rho_h),$$
\noindent where the derivative term is
\begin{align}\Bigl({\frac{d}{ds}}\Bigr)_{s=1}\int_{{\mathbb{S}}^2} \Theta (\rho_{sh}(y))\rho_{sh}(y)m(dy)&=\int_{{\mathbb{S}}^2} \Theta_1 (\rho_h(y))\Bigl({\frac{\partial \rho_{sh}(y)}{\partial s}}\Bigr)_{s=1}m(dy)\nonumber\\
&=-\int_{{\mathbb{S}}^2} \Theta_1 (\rho_h(y))\nabla\cdot (\vec v(y;1))\rho_h(y))m(dy)\nonumber\\
&=\int_{{\mathbb{S}}^2}\rho_h(y)\vec v(y;1)\cdot \nabla  (\Theta_1\circ \rho_h)(y)\, m(dy),\end{align}
where the velocity field $\vec v(y;1)$ is given as follows. Let $\lambda (x\times (\xi +\zeta ))=\zeta-(\xi+\zeta )\cdot \zeta (\xi +\zeta)/\Vert \xi+\zeta \Vert^2$ be the component of velocity in $T_x{\mathbb{S}}^2$ that is perpendicular to $\xi+\zeta $; then
$$\begin{bmatrix} y\cr v\cr y\times v\end{bmatrix}=\begin{bmatrix} \cos (\Vert \xi +\zeta\Vert) & {\frac{\sin (\Vert \xi+\zeta \Vert)}{\Vert \xi+\zeta \Vert}} &0\cr 
{\frac{-\sin (\Vert \xi+\zeta \Vert )(\xi +\zeta )\cdot \zeta }{\Vert \xi +\zeta\Vert}}&{\frac{\cos (\Vert \xi +\zeta \Vert )(\xi +\zeta )\cdot \zeta}{\Vert \xi +\zeta\Vert^2}}&\lambda {\frac{\sin (\Vert \xi+\zeta\Vert)}{\Vert \xi +\zeta\Vert}}\cr
\lambda\sin^2 (\Vert \xi+\zeta \Vert) & -{\frac{\lambda \sin (\Vert \xi +\zeta \Vert) \cos (\Vert \xi +\zeta\Vert)}{\Vert \xi +\zeta\Vert}}& {\frac{(\xi +\zeta )\cdot \zeta}{\Vert \xi +\zeta\Vert^2}}\end{bmatrix}\begin{bmatrix} x\cr \xi +\zeta\cr x\times (\xi +\zeta)\end{bmatrix}$$
\noindent where $\zeta =h^2\nabla\phi_h(x)-h\nabla\phi_0(x)$ and $\xi +\zeta =h^2\nabla\phi_h(x)$. Hence the velocity is
\begin{align}v&={\frac{-\sin (\Vert h^2\nabla\phi_h(x) \Vert )(\nabla\phi_h(x)  )\cdot (h^2\nabla\phi_h(x)-h\nabla\phi_0(x)) }{\Vert \nabla\phi_h(x) \Vert}}x\nonumber\\
&\quad +{\frac{\cos (\Vert h^2\nabla\phi_h(x) \Vert )(\nabla\phi_h(x) )\cdot (h^2\nabla\phi_h(x)-h\nabla\phi_0(x))}{\Vert \nabla\phi_h(x) \Vert^2}}\nabla\phi_h(x)\nonumber\\
&\quad +{\frac{\sin (\Vert h^2\nabla\phi_h(x) \Vert)}{\Vert h^2\nabla\phi_h(x) \Vert}}\Bigl(h^2\nabla\phi_h(x)-h\nabla\phi_0(x)\nonumber\\
&\qquad-{\frac{(h^2\nabla\phi_h(x)-h\nabla\phi_0(x))\cdot (\nabla\phi_h(x))(\nabla\phi_h(x))}{\Vert \nabla\phi_h(x) \Vert^2}}\Bigr).\end{align}

\indent Recall that $T_h^*(x)=\exp_x (h^2\nabla \Theta_1(\rho_h(x)))$ and $T_h(x)=\exp_x(h^2\nabla\phi_h(x))$, so with $\sigma =h^2\Vert \nabla\phi_h(x)\Vert$, we have
$$\begin{bmatrix}y\cr {\frac{\nabla(\Theta_1\circ \rho_h)(y)}{\Vert \nabla (\Theta_1\circ \rho_h )(y)\Vert}}\end{bmatrix}=\begin{bmatrix}\cos\sigma &\sin \sigma\cr -\sin\sigma &\cos\sigma\end{bmatrix} \begin{bmatrix}x\cr {\frac{\nabla\phi_h(x)}{\Vert\nabla\phi_h(x)\Vert}}\end{bmatrix}.$$

\noindent When taking the scalar product with $\nabla \Theta_1(\rho_h(x))$, which is parallel to $\nabla\phi_h(x)$, we get
$$\Bigl({\frac{d}{ds}}\Bigr)_{s=1}\int_{{\mathbb{S}}^2} \Theta (\rho_{sh}(y))\rho_{sh}(y)m(dy)=\int_{{\mathbb{S}}^2} f(x)(h\nabla\phi_h(x)-\nabla\phi_0(x))\cdot \nabla(\Theta_1\circ\rho_h)(x))m(dx)$$
hence
$${\mathcal U}(\rho_0)+\int_{{\mathbb{S}}^2} f(x)\nabla( \Theta_1\circ \rho_h )(x)\cdot \nabla (-h\phi_0(x)+
h^2\phi_h(x)) m(dx)\geq {\mathcal U}(\rho_h).$$ 
\vskip.05in
\noindent At $y=F_x(1,h)=\exp_x(h^2\nabla\phi_h(x))=T_h(x)$, where $x=T_h^*(y)=\exp_y(h^2\nabla \Theta_1\circ\rho_h(y))$ the corresponding velocity is 
$$v(y)=-\nabla\phi_0(x)+h\nabla\phi_h(x)=-(\nabla \phi_0)(T_h^*(y))+h(\nabla\phi_h)(T_h^*(y))$$
so we use the Helmholz decomposition in $L^2(\rho_h)$ to write $v(y)=\nabla q_h(y)+w$, where $\nabla\cdot (\rho_hw)=0$, and $q_h$ is defined to 
be the new velocity potential. Thus we update $(\rho_0, q_0)$ to $(\rho_h, q_h)$. 
\end{proof}   
\vskip.05in
\begin{cor}\label{Corollary2} (i) The velocity potential $q_h$ is exponentially integrable.\par
(ii) The amount of energy that is dissipated during one step of the algorithm is
\begin{align}{\mathcal H}(\rho_0, q_0)-{\mathcal H}(\rho_h,q_h)\geq {\frac{h^2}{2}}\int_{{\mathbb{S}}^2}\Vert\nabla (\Theta_1\circ \rho_h )(x,t)\Vert^2\rho_h(x)m(dx).\end{align}
\end{cor}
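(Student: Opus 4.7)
I propose proving the two parts of Corollary \ref{Corollary2} by separate strategies.

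For part (i), the plan is to combine a logarithmic Sobolev inequality for $\rho_h$ with a Lipschitz bound on $q_h$, and then apply Herbst's argument. By Theorem \ref{Theorem1}(i), $\rho_h$ is uniformly positive and bounded on ${\mathbb{S}}^2$, so by Ledoux's theorem and the Holley--Stroock perturbation lemma (as already invoked in Proposition \ref{proposition7}), $\rho_h\, m$ admits a log-Sobolev inequality with some constant $C_{LS} > 0$. The velocity field $v(y) = -(\nabla\phi_0)(T_h^*(y)) + h(\nabla\phi_h)(T_h^*(y))$ is essentially bounded on ${\mathbb{S}}^2$, since the gradients of the $d^2/2$-concave functions $-\phi_0$ and $-\phi_h$ are uniformly bounded by the diameter of the sphere. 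The velocity potential $q_h$ is determined by $\nabla\cdot(\rho_h \nabla q_h) = \nabla\cdot(\rho_h v)$ with bounded source and coefficient $\rho_h \in [\delta, \delta^{-1}]$; standard elliptic regularity on the smooth compact manifold yields a Lipschitz representative, and Herbst's argument for log-Sobolev measures then gives $\int_{{\mathbb{S}}^2} e^{\lambda q_h}\rho_h\, m(dx) < \infty$ for every $\lambda \in {\mathbb{R}}$.

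For part (ii), the core of the plan is to chain Stage 2 minimality with the Hodge--Helmholtz decomposition at Stage 3. Since $T_0^*$ and $T_h^*$ are the optimal transport maps (by the $d^2/2$-concavity of $-q_0$ and Theorem \ref{Theorem1}(ii) respectively), the Wasserstein distances take the explicit forms
$$W_2^2(\rho_0, f) = \tfrac{h^2}{2}\int \|\nabla q_0\|^2 \rho_0\, m(dx), \qquad W_2^2(\rho_h, f) = \tfrac{h^4}{2}\int \|\nabla(\Theta_1\circ\rho_h)\|^2 \rho_h\, m(dx).$$
The Stage 2 minimality ${\mathcal E}(\rho_h;f) \leq {\mathcal E}(\rho_0;f)$ with ${\mathcal E}(\rho;f) = W_2^2(\rho,f) + h^2 {\mathcal U}(\rho)$, divided by $h^2$, already yields the intermediate inequality
$${\mathcal H}(\rho_0, q_0) \geq {\mathcal U}(\rho_h) + \tfrac{h^2}{2}\int \|\nabla(\Theta_1\circ\rho_h)\|^2 \rho_h\, m(dx),$$
which is the target statement modulo the new kinetic term $\tfrac12\int \|\nabla q_h\|^2\rho_h\, m(dx)$ appearing in ${\mathcal H}(\rho_h, q_h)$.

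To control this kinetic contribution, I would invoke the Hodge--Helmholtz orthogonality $\int \|\nabla q_h\|^2 \rho_h\, m(dx) \leq \int \|v\|^2 \rho_h\, m(dx)$ and the change of variables $x = T_h^*(y)$, obtaining
$$\int \|v\|^2 \rho_h\, m(dx) = \int \|\nabla\phi_0\|^2 f\, m(dx) - 2h\int \nabla\phi_0\cdot\nabla\phi_h f\, m(dx) + h^2\int\|\nabla\phi_h\|^2 f\, m(dx).$$
Optimality of the pair $(T_0, T_0^*)$ gives $\int\|\nabla\phi_0\|^2 f\, m(dx) = \int\|\nabla q_0\|^2\rho_0\, m(dx)$, so the kinetic correction reduces to the cross and quadratic terms in $\nabla\phi_h$. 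These are then absorbed using Theorem \ref{Theorem2}(2), combined with the parallel-transport identification $\nabla(\Theta_1\circ\rho_h)(T_h(x)) = -P_{x,T_h(x)}\nabla\phi_h(x)$ that follows from Gauss's lemma applied to the Hopf--Lax envelope formula for $\phi_h$ in Theorem \ref{Theorem1}(ii). The main obstacle will be the precise bookkeeping of signs and any residual curvature correction in this identification; any $O(h^2)$ remainder can be absorbed using the slack $(2/\pi^2)W_2^2(\rho_0,\rho_h)$ from Theorem \ref{Theorem2}(1), which was not yet consumed by the minimality-only argument above.
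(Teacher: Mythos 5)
Your part (i) diverges from the paper and has a gap. The paper applies Onofri's inequality
$\log\int_{{\mathbb{S}}^2} e^{q}\,\frac{m(dx)}{4\pi}\leq\int_{{\mathbb{S}}^2}q\,\frac{m(dx)}{4\pi}+\frac14\int_{{\mathbb{S}}^2}\|\nabla q\|^2\frac{m(dx)}{4\pi}$,
which requires only $q_h\in H^1({\mathbb{S}}^2)$ --- already available from the Hodge--Helmholtz step --- and gives exponential integrability in one line. Your route through Herbst needs $q_h$ Lipschitz, and the elliptic-regularity step you invoke does not deliver this: $q_h$ solves $\nabla\cdot(\rho_h\nabla q_h)=\nabla\cdot(\rho_h v)$ where $v$ is built from gradients of $d^2/2$-concave potentials whose second derivatives are only of Alexandrov type (measure-valued, cf.\ the lower bound (\ref{lowerboundonHessian})). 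A measure-valued source on a two-manifold does not yield a $W^{1,\infty}$ solution, so the Lipschitz hypothesis needed for Herbst is unavailable.

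Your part (ii) is the paper's argument, once you strip out the detour. The intermediate inequality $\mathcal{H}(\rho_0,q_0)\geq\mathcal{U}(\rho_h)+\tfrac{h^2}{2}\int\rho_h\|\nabla(\Theta_1\circ\rho_h)\|^2m(dx)$ from Stage 2 minimality is genuinely \emph{weaker} than the target (it omits the nonnegative kinetic term $\tfrac12\int\|\nabla q_h\|^2\rho_h\,m(dx)$), and your Hodge--Helmholtz argument does not build on it --- it starts over; so drop it. The remainder of your plan --- orthogonality $\int\|\nabla q_h\|^2\rho_h\leq\int\|v\|^2\rho_h$ from Proposition \ref{proposition9}, the change of variables $y=T_h(x)$ so that $\tfrac12\int\|v\|^2\rho_h=\tfrac12\int\|\nabla\phi_0-h\nabla\phi_h\|^2 f$, the optimality identity $\int\|\nabla\phi_0\|^2f=\int\|\nabla q_0\|^2\rho_0$, and Theorem \ref{Theorem2}(2) to absorb $\mathcal{U}(\rho_h)-\mathcal{U}(\rho_0)$ --- is exactly the paper's chain. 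Your parallel-transport identification $\nabla(\Theta_1\circ\rho_h)(T_h(x))=-P_{x,T_h(x)}\nabla\phi_h(x)$ carries the correct sign (the paper's displayed rotation matrix in the proof of Theorem \ref{Theorem2} appears to drop a sign, but the final algebra is consistent with the minus); inserting it into the Theorem \ref{Theorem2}(2) term, the $h\nabla\phi_0\cdot\nabla\phi_h$ cross terms cancel and the quadratic terms combine to give exactly $-\tfrac{h^2}{2}\int f\|\nabla\phi_h\|^2=-\tfrac{h^2}{2}\int\rho_h\|\nabla(\Theta_1\circ\rho_h)\|^2$. No $O(h^2)$ residual is left, so the Theorem \ref{Theorem2}(1) slack $\tfrac{2}{\pi^2}W_2^2(\rho_0,\rho_h)$ is not needed; the hedge at the end of your plan can be removed and replaced by that explicit cancellation.
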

\vskip.05in
\begin{proof}  (i) By Onofri's inequality \cite{bib31}, $q_h$ is exponentially integrable, since 
$$\log \int_{{\mathbb{S}}^2} e^{q(x)} {\frac{m(dx)}{4\pi}}\leq \int_{{\mathbb{S}}^2} q(x){\frac{m(dx)}{4\pi}}+{\frac{1}{4}}\int_{{\mathbb{S}}^2} \Vert \nabla q(x)\Vert^2 {\frac{m(dx)}{4\pi}}.$$
(ii) Combining Theorem \ref{Theorem2} with Proposition \ref{proposition9}, we have with $y=T_h(x)$, so 
\begin{align}{\mathcal H}(\rho_h,q_h)&={\frac{1}{2}}\int_{{\mathbb{S}}^2}\Vert\nabla q_h (y)\Vert^2 \rho_h(y)m(dy)+{\mathcal U}(\rho_h)\nonumber\\
&\leq {\frac{1}{2}}\int_{{\mathbb{S}}^2}\Vert v(y)\Vert^2 \rho_h(y)m(dy)+{\mathcal U}(\rho_h)\nonumber\\
&={\frac{1}{2}}\int_{{\mathbb{S}}^2}\Vert \nabla \phi_0(x)-h\nabla\phi_h(x)\Vert^2f(x)m(dx) +\int_{{\mathbb{S}}^2}\Theta (\rho_h(x))\rho_h(x)m(dx) \nonumber\\
&\leq {\frac{1}{2}}\int_{{\mathbb{S}}^2}\Vert \nabla \phi_0(x)\Vert^2f(x)m(dx)+\int_{{\mathbb{S}}^2}\Theta (\rho_0(x))\rho_0(x)m(dx)\nonumber\\
&\quad +{\frac{h^2}{2}}\int_{{\mathbb{S}}^2}\Vert\nabla\phi_h(x)\Vert^2f(x)m(dx) -h\int_{{\mathbb{S}}^2}\nabla \phi_0(x)\cdot \nabla\phi_h(x)f(x)m(dx)\nonumber\\
&\quad +\int_{{\mathbb{S}}^2}f(x)\nabla \Theta_1(\rho_h(x))\cdot (-h\nabla\phi_0(x)+h^2\nabla\phi_h(x))m(dx)\nonumber\\
&\leq {\frac{1}{2}}\int_{{\mathbb{S}}^2}\Vert \nabla q_0(x)\Vert^2\rho_0(x)m(dx) +\int_{{\mathbb{S}}^2}\Theta (\rho_0(x))\rho_0(x)m(dx)\nonumber\\
&\quad -{\frac{h^2}{2}}\int_{{\mathbb{S}}^2}\rho_h(x)\Vert\nabla (\Theta_1\circ \rho_h)(x))\Vert^2m(dx),\end{align}   
hence the result. Thus we have dissipation of energy, as required by (\ref{dissipation}).
\end{proof} 
\section{\label{weakeuler}Weak solutions of the Euler equations}
\noindent In this section we use the discrete time algorithm with $h\rightarrow 0$ to obtain weak solutions to the Euler equations. Both the continuity equation and the acceleration equation involve $\rho$, so we use the approximation procedure of the preceding section to create a discrete-time approximation, which we then convert into a $2$-absolutely continuous path $[0, \tau ]\rightarrow {\mathcal W}^2({\mathbb{S}}^2):$ $t\mapsto p_t$; then we solve the associated ODE to generate a flow in $T{\mathbb{S}}^2$, and this flow induces a $2$-absolutely continuous path $[0, \tau ]\rightarrow {\mathcal W}^1({\mathbb{S}}^2):$ $t\mapsto \rho (\cdot ,t )$. Due to lack of Lipschitz continuity, there is an extra approximation step, where we smooth the densities in the space variable. We are able to establish existence of a weak solution of the Euler continuity equation in this way.

\subsection{Weak solution of the continuity equation}

\begin{prop}\label{proposition11} Let $h,\varepsilon, \delta>0$ and suppose that $q_0\in H^1({\mathbb{S}}^2)$ and that $\rho_0\in L^1({\mathbb{S}}^2)$ satisfies $\delta \leq \rho_0(x)\leq 1/\delta $ and 
$\int_{{\mathbb{S}}^2}\rho_0(x)\Vert\nabla (\Theta_1\circ \rho_0)(x)\Vert^2m(dx)<\infty$.\par
\indent (i) Then there exists a $2$-absolutely continuous path $[0, \tau ]\rightarrow {\mathcal W}({\mathbb{S}}^2):$ $t\mapsto \rho_t^{(h, \varepsilon )}$  such that (\ref{ODE}) has a solution, and $x\mapsto X^{(h, \varepsilon )}(x,t)$ is bijective ${\mathbb{S}}^2\rightarrow {\mathbb{S}}^2$ with inverse $x\mapsto X^{(h, \varepsilon ), *}(x,t)$ for $t\in [0, \tau]$. \par
\indent (ii) There exists a sequence $(h_\nu, \varepsilon_\nu)\rightarrow (0,0)$, such that $X^{(h_\nu, \varepsilon_\nu )}(x,t)\rightarrow X(x,t)$ uniformly on ${\mathbb{S}}^2\times [0, \tau ]$. Let $\rho (x,t)$ be the probability density function that is induced by $x\mapsto X(x,t)$ from $\rho_0$. Then $\rho(x,t)$  gives a weak solution of the Eulerian continuity equation (\ref{continuity}) with velocity $\vec v(y,t)={\frac{\partial X}{\partial t}}\circ X^*(y,t)$.\end{prop}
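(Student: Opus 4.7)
The plan is to construct, for each $h,\varepsilon>0$, a discrete-time trajectory via the three-step algorithm combined with spatial convolution, interpolate it to a continuous-time path in ${\mathcal W}^2({\mathbb{S}}^2)$, extract a convergent subsequence, and identify the limit as a weak solution of~\eqref{continuity}. Starting from $(\rho_0,q_0)$, iterate: given $(\rho_{nh}^{(h,\varepsilon)}, q_{nh}^{(h,\varepsilon)})$, Stage~1 produces $\hat X_{nh}=\exp_{X_{nh}}(h\nabla q_{nh}^{(h,\varepsilon)})$ and hence $f_{(n+1)h}=\hat X_{nh}\sharp \rho_{nh}^{(h,\varepsilon)}$; convolve with the $SO(3)$-invariant approximate identity $\psi_\varepsilon$ from Corollary~\ref{Corollary1}(iii) to obtain a smooth $f_{(n+1)h}^{(\varepsilon)}$; Stage~2 applies Theorem~\ref{Theorem1} to produce the unique minimizer $\rho_{(n+1)h}^{(h,\varepsilon)}$ of ${\mathcal E}(\,\cdot\,; f_{(n+1)h}^{(\varepsilon)})$, which by Theorem~\ref{Theorem1}(i) is uniformly bounded above and below; Stage~3 uses Proposition~\ref{proposition9} to extract $q_{(n+1)h}^{(h,\varepsilon)}$ as the gradient part of the velocity furnished by Theorem~\ref{Theorem2}. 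Because the resulting $\rho_{nh}^{(h,\varepsilon)}$ is smooth with two-sided bounds, the piecewise-constant-in-time field $g^{(h,\varepsilon)}(X,t)=-X-\nabla(\Theta_1\circ\rho_{\lfloor t/h\rfloor h}^{(h,\varepsilon)})(X)$ is Lipschitz in $X$ on $T{\mathbb{S}}^2$; hence~\eqref{ODE} has a unique Lipschitz flow $[X^{(h,\varepsilon)};V^{(h,\varepsilon)}]$ on $[0,\tau]$ with $x\mapsto X^{(h,\varepsilon)}(x,t)$ a bijection of~${\mathbb{S}}^2$, and I set $\rho_t^{(h,\varepsilon)}:=X^{(h,\varepsilon)}(\cdot,t)\sharp\rho_0$.

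For~(i), iterating the energy dissipation inequality from Corollary~\ref{Corollary2} gives, for all $N$ with $Nh\leq\tau$,
\begin{align*}
{\mathcal H}(\rho_{Nh}^{(h,\varepsilon)},q_{Nh}^{(h,\varepsilon)})+\frac{h^2}{2}\sum_{n=1}^{N}\int_{{\mathbb{S}}^2}\rho_{nh}^{(h,\varepsilon)}\Vert\nabla(\Theta_1\circ\rho_{nh}^{(h,\varepsilon)})\Vert^2\, m(dx)\leq {\mathcal H}(\rho_0,q_0),
\end{align*}
so in particular the kinetic component satisfies $\int_{{\mathbb{S}}^2}\Vert V^{(h,\varepsilon)}(x,t)\Vert^2\rho_0(x)m(dx)\leq 2{\mathcal H}(\rho_0,q_0)$ uniformly in $t\in[0,\tau]$ and in $(h,\varepsilon)$. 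Estimate~\eqref{2absfromODE} of Proposition~\ref{proposition2}(v) then delivers $W_2^2(\rho_{t_2}^{(h,\varepsilon)},\rho_{t_1}^{(h,\varepsilon)})\leq C|t_2-t_1|$ uniformly in $(h,\varepsilon)$, which is the required $2$-absolute continuity and gives $W_1$ equi-continuity a fortiori.

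For~(ii), I would apply Arzel\`a--Ascoli to $(x,t)\mapsto X^{(h_\nu,\varepsilon_\nu)}(x,t)$: equi-continuity in $t$ follows from the velocity bound just established, while equi-continuity in $x$ follows from Gronwall applied to~\eqref{ODE} using the spatial Lipschitz constant of $g^{(h,\varepsilon)}$, kept bounded by choosing $\varepsilon=\varepsilon(h)\to 0$ slowly relative to $h\to 0$. A diagonal subsequence then yields uniform convergence $X^{(h_\nu,\varepsilon_\nu)}\to X$ on ${\mathbb{S}}^2\times[0,\tau]$, and $\rho(\cdot,t):=X(\cdot,t)\sharp\rho_0$ is well-defined. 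To verify~\eqref{continuity} weakly, test against $\varphi\in C^\infty_c((0,\tau)\times{\mathbb{S}}^2)$ and use the Lagrangian identity
\begin{align*}
\frac{d}{dt}\!\int_{{\mathbb{S}}^2}\!\varphi(t,X^{(h_\nu,\varepsilon_\nu)}(x,t))\rho_0(x) m(dx)=\!\int_{{\mathbb{S}}^2}\!\bigl(\partial_t\varphi+V^{(h_\nu,\varepsilon_\nu)}\cdot\nabla\varphi\bigr)(t,X^{(h_\nu,\varepsilon_\nu)})\rho_0 m(dx),
\end{align*}
integrate in $t$, then pass to the limit using uniform convergence of $X^{(h_\nu,\varepsilon_\nu)}$ together with weak $L^2(\rho_0\, m\otimes dt)$-convergence of $V^{(h_\nu,\varepsilon_\nu)}$ to $\partial_t X$; a change of variables $y=X(x,t)$ then identifies the limit with the weak form of~\eqref{continuity} and $\vec v(y,t)=\partial_t X(X^*(y,t),t)$.

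The main obstacle will be the coordinated choice of $(h_\nu,\varepsilon_\nu)\to(0,0)$: the spatial Lipschitz constant $L_{h,\varepsilon}$ of $g^{(h,\varepsilon)}$ blows up as $\varepsilon\to 0$, so $\varepsilon_\nu$ must shrink only slowly in $h_\nu$ to keep $e^{L_{h_\nu,\varepsilon_\nu}\tau}$ under control, yet $\varepsilon_\nu$ must also go to zero fast enough that $\psi_{\varepsilon_\nu}\ast f_{nh}^{(h_\nu)}\to f_{nh}^{(h_\nu)}$ in $L^1$ uniformly in $n$, so that the smoothed Stage~2 minimization converges to the unsmoothed one. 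The uniform Fisher-information bound in the displayed inequality above is the key tool both for arranging this compatibility and for obtaining the weak $L^2$-convergence of $V^{(h_\nu,\varepsilon_\nu)}$ to $\partial_t X$ rather than to some other object. The situation is analogous to the BV-based construction of Gangbo--Westdickenberg~\cite{bib30}, but adapted to positive curvature via the displacement-convexity estimate of Proposition~\ref{proposition10} and the regularity from Corollary~\ref{Corollary1}.
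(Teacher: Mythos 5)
Your proposal tracks the paper's strategy closely: time-discretize via Stages 1--3, mollify in the space variable by the $SO(3)$-convolution $\psi_\varepsilon\ast\cdot$, solve the resulting Lagrangian ODE on $T{\mathbb{S}}^2$, then extract a uniformly convergent subsequence and identify the limit with a weak solution of the continuity equation. The overall architecture is the same, so this is not a genuinely different route. However, there is one organizational difference and two genuine gaps worth recording.

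The organizational difference: you fold the mollification into each iteration (smoothing the Stage~1 predictor $f_{(n+1)h}$ before the Stage~2 minimization), whereas the paper first constructs the unmollified discrete trajectory $\rho_{jh}$ and the $2$-absolutely continuous interpolated path $p^{(h)}_t$, establishes its weak compactness in the Orlicz space associated with $U(r)=r\Theta(r)$, and only then mollifies the \emph{interpolated path} to $p^{(h,\varepsilon)}_t=\psi_\varepsilon\ast p^{(h)}_t$ and uses it as the time-dependent coefficient in the ODE. The paper's ordering keeps the Stage~2 variational problem unchanged from Theorem~\ref{Theorem1}, so all the regularity and the $\delta\leq\rho_h\leq 1/\delta$ bounds carry over verbatim; in your version the minimization target is the mollified predictor, and you would need to re-verify that the conclusions of Theorem~\ref{Theorem1} (uniform two-sided bounds, the map $T_h^*$ of the form $\exp_x(h^2\nabla(\Theta_1\circ\rho_h))$) still hold for that modified functional — you assert this but do not check it.

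The first genuine gap: you claim a uniform bound $\int_{{\mathbb{S}}^2}\Vert V^{(h,\varepsilon)}(x,t)\Vert^2\rho_0(x)m(dx)\leq 2{\mathcal H}(\rho_0,q_0)$ directly from Corollary~\ref{Corollary2}, but that corollary bounds the kinetic energy of the \emph{discrete} updates $\nabla q_{nh}$, not the velocity along the ODE flow. The ODE velocity $V^{(h,\varepsilon)}$ obeys $\dot V=-X-\nabla(\Theta_1\circ p^{(h,\varepsilon)}_t)(X)$, and propagating a kinetic-energy bound forward requires estimating $\frac{d}{dt}\int\Vert V\Vert^2\rho_0\,dm=-2\int V\cdot\nabla(\Theta_1\circ p^{(h,\varepsilon)}_t)(X)\rho_0\,dm$, which involves the Fisher-information term you keep under control only in $L^2(dt)$, not pointwise in $t$. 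The paper sidesteps this by working with the interpolated path $p^{(h)}_t$ and the $W_2$-quadratic-variation estimate~(\ref{Wsum}), then comparing the ODE flow to the interpolated path through the Gronwall estimate on the integral equation~(\ref{integralequation}); your argument needs a corresponding comparison step.

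The second gap you flag but do not close: the Lipschitz constant of $g^{(h,\varepsilon)}$ grows like $\Vert D^2(\Theta_1\circ p^{(h,\varepsilon)}_t)\Vert_{L^\infty}$, which diverges as $\varepsilon\to 0$, so the Gronwall equicontinuity-in-$x$ estimate is not uniform in $(h,\varepsilon)$ and Arzel\`a--Ascoli does not apply directly to a diagonal sequence without further argument. You acknowledge this and propose ``choosing $\varepsilon=\varepsilon(h)\to 0$ slowly,'' but you give no criterion guaranteeing simultaneous compatibility with the requirement that $\psi_{\varepsilon}\ast f_{nh}\to f_{nh}$ in $L^1$ uniformly in $n$, nor do you show that the resulting limit is independent of the choice. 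The paper's proof asserts uniform equicontinuity of $[X^{(h,\varepsilon)};V^{(h,\varepsilon)}]$ somewhat tersely; if you are reconstructing the argument, this is the place where a quantitative schedule linking $\varepsilon_\nu$ to $h_\nu$ (or a stronger a priori $C^{1,1}$-bound on $p^{(h)}_t$ coming from Loeper's regularity theory, cf.\ Corollary~\ref{Corollary1}(ii)) is actually required, and you should supply it rather than only noting the difficulty.
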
   
\begin{proof}  The Bochner--Lebesgue space $L^1([0, \tau ]; C({\mathbb{S}}^2))=L^1[0, \tau ]\hat\otimes C({\mathbb{S}}^2)$ has dual\par
\noindent  $L^\infty ([0, \tau ]; C({\mathbb{S}})')=L^\infty [0, \tau ]\check \otimes C({\mathbb{S}}^2)'$ where 
$C({\mathbb{S}}^2)'$ is the space of bounded Radon measures on ${\mathbb{S}}^2$. We consider $\rho (x,t)$ as a linear functional on  $L^1([0, \tau ]; C({\mathbb{S}}^2))$ with the pairing by integration. More specifically, for $\tau, \delta, K>0$, we introduce
\begin{align}{\mathcal M}(\delta, K,\tau )&=\Biggl\{\rho\in L^\infty ({\mathbb{S}}^2\times [0,\tau ];{\mathbb{R}}): \int_M\rho (x,t)m(dx)=1;\quad\delta \leq \rho (x,t)\leq 1/\delta;\nonumber\\
&\qquad  \int_0^\tau \int_{{\mathbb{S}}^2} \rho (x,t)\Vert\nabla (\Theta_1\circ\rho )(x,t)\Vert^2m(dx)dt\leq K\Biggr\}.\end{align}
 Let $U(r)=r\Theta (r).$ Then by Proposition \ref{proposition8}, ${\mathcal M}(\delta ,\tau,  K)$ is a bounded and hence relatively weakly compact subset of the reflexive Orlicz space $L_U ({\mathbb{S}}^2\times [0,\tau ];{\mathbb{R}})$, hence a weakly compact subset of
$L^1({\mathbb{S}}^2\times [0,\tau ];{\mathbb{R}})$. For each $t$, the set ${\mathcal M}_t(\delta ,\tau,  K)=\{ \rho (\cdot , t): \rho\in  {\mathcal M}(\delta, K,\tau )\}$ is a bounded, uniformly integrable and hence weakly compact subset of $L^1({\mathbb{S}}^2;m)$, hence a relatively weakly compact subset of ${\mathcal W}^2({\mathbb{S}}^2).$\par 
\indent This discussion can be simplified in the case $\Theta (r)=r^{1/2}$, since we have a quadratic expression $\rho \Vert\nabla (\Theta_1\circ\rho )\Vert^2=(3/4)^2 \Vert \nabla\rho\Vert^2$. The corresponding Dirichlet form is weakly lower semicontinuous or equivalently closeable in $L^2({\mathbb{S}}^2)$, and satisfies a spectral gap inequality (\ref{Spec}).\par 
\indent We create an approximate solution to the  (\ref{continuity}) in ${\mathcal M}(\delta, K,\tau )$ by the previous results. As in Theorem \ref{Theorem2} we update the frame $[x_0;v_0;x_0\times v_0]$ to $[x_h;v_h;x_h\times v_h]$ and $(\rho_0,q_0)$ to $(\rho_h, q_h)$, where $q_h\in H^1({\mathbb{S}}^2)$ and $\rho_h\in {\mathcal M}_h(\delta , K,\tau )$. The map $x_0\mapsto X_h$ gives a bijection ${\mathbb{S}}^2\mapsto  {\mathbb{S}}^2.$
By repeating the stages 1-3 of the approximation process, we can build frames $[X_{jh}; V_{jh}; X_{jh}\times V_{jh}]$ and corresponding $(\rho_{jh}, q_{jh})\in {\mathcal M}(\delta, K)\times H^1({\mathbb{S}}^2)$, so that $x_0\mapsto x_{jh}$ induces $\rho_{jh}$ from $\rho_0$ for $j=1, 2,\dots , \lfloor \tau/h\rfloor$. We join these points by polygonal paths  to give a continuous function 
$$T{\mathbb{S}}^2\times [0, \tau ]\rightarrow T{\mathbb{S}}^2\times {\mathcal M}(\delta ,K, \tau ): ((x,v,t)\mapsto (X(x,v,t),V(x,v,t),p_t(X(x,v,t))$$
such that $X(x,v,0)=x$, $V(x,v,0)=v$ and  $(x,t)\rightarrow \nabla_x (\Theta_1\circ\rho )(x,t)$ is continuous $ {\mathbb{S}}^2\times [0, \tau]\rightarrow T{\mathbb{S}}^2$; the final point is supported by Corollary \ref{Corollary1} and (\ref{Wcontinuity}). By Theorem \ref{Theorem1} and  (\ref{lowerboundonHessian}), the function $\nabla (\Theta_1\circ \rho_h)$ is of bounded variation, in the sense that $D^2(\Theta_1\circ \rho_h)$ is a positive matrix of measures. 
\indent By Corollary \ref{Corollary1}, $T_h^*(x)=\exp_xh\nabla (\Theta_1\circ\rho_h)(x)$ is continuous.  We have $X(x, jh)$ and $\rho (x,jh)$ such that $X(\cdot , jh)\sharp\rho_0(x)=\rho (x,jh)$, such that $x\mapsto \nabla (\Theta_1\circ\rho (\cdot ,jh))(x)$ is continuous. By construction $x\mapsto X(x,jh)$ is bijective.\par
\indent We introduce a $2$-absolutely continuous path $[0,\tau ]\rightarrow {\mathcal W}({\mathbb{S}}^2):$ $u\mapsto\rho_u^{(h)}$ such that $p_{jh}^{(h)}(x)=\rho (x,jh)$ and  $x\mapsto \nabla (\Theta_1\circ p_u^{(h)}) (x))$ is continuous. By Corollary \ref{Corollary2}, we have
\begin{align}\label{Wcontinuity}W_2^2(\rho_h,\rho_0)&\leq 2W_2^2(\rho_0,f)+2W_2^2(\rho_h,f)\nonumber\\
&\leq 2h^2\int_{{\mathbb{S}}^2} \Vert\nabla q_0\Vert^2\rho_0 m(dx)+2h^2\int_{{\mathbb{S}}^2} \Vert\nabla (\Theta_1\circ\rho_h)\Vert^2\rho_h m(dx).\end{align}
\noindent Then, from (\ref{Wcontinuity}), we have a $1/2$-H\"older continuity estimate, where $N=\tau/h -1$, 
\begin{align}\label{Wsum}W_2^2(\rho_{0}, \rho_{\tau})&\leq {\frac{\tau}{h}}\sum_{j=0}^{N}W_2^2(\rho_{(j+1)h},\rho_{jh})\nonumber\\
&\leq  {\frac{\tau}{h}}\sum_{j=0}^{N}2h^2\Bigl( \int_{{\mathbb{S}}^2} \Vert\nabla q_{jh}\Vert^2\rho_{jh} m(dx)+\int_{{\mathbb{S}}^2} \Vert\nabla (\Theta_1\circ\rho_{jh})\Vert^2\rho_{jh} m(dx)\Bigr),\end{align}
which is
$$\leq 2\tau\int_0^\tau  \int_{{\mathbb{S}}^2} \Vert\nabla q_{t}(x)\Vert^2p_{t}^{(h)}(x) m(dx)dt
+2\tau\int_0^\tau  \int_{{\mathbb{S}}^2} \Vert\nabla(\Theta_1\circ p_t^{(h)})(x)\Vert^2p_{t}^{(h)}(x) m(dx)dt,$$
so the process $(X(x,jh))_{j=0}^N$ is of finite quadratic variation, and there exists a $2$-absolutely continuous function $[0, \tau ]\rightarrow {\mathcal W}^2:$ $t\mapsto p_t^{(h)}$. The properties of such curves are established in Theorem 8.3.1 of \cite{bib2}, and the following argument uses the proof from there. \par
\indent By weak compactness, there exists a sequence $h_\nu\rightarrow 0$ and $p\in L^1({\mathbb{S}}^2\times [0, \tau ])$ such that 
\begin{align}\label{plimit}\int_0^\tau  \int_{{\mathbb{S}}^2}\psi (x,t) p_t^{(h_\nu )}(x)m(dx)dt\rightarrow \int_0^\tau  \int_{{\mathbb{S}}^2}\psi (x,t) p(x,t)m(dx)dt\end{align}
\noindent for all $\psi\in  C({\mathbb{S}}^2\times [0, \tau ];{\mathbb{R}})$. 
\indent Let $\psi\in C^1({\mathbb{S}}^2\times [0, \tau ];{\mathbb{R}})$ and let
$$H_s(x,y)=\begin{cases} \Vert\nabla\psi (x,s)\Vert, \qquad x=y;\\ {\frac{\vert \psi (x,s)-\psi (y,s)\vert}{d(x,y)}}, \qquad x\neq y;\end{cases}$$
also let $\pi_{s,t}$ be an optimal transport plan for taking $p^{(h)}_sdm$ to $p^{(h)}_tdm$. We have
\begin{align}{\frac{1}{\vert \varepsilon \vert}}\Bigl\vert \int_{{\mathbb{S}}^2}&\psi (x,s)(p^{(h)}_{s+\varepsilon }(x)-p^{(h)}_{s}(x))m(dx)\Bigr\vert\nonumber\\
&\leq {\frac{1}{\vert \varepsilon \vert}}\Bigl\vert\int\!\!\! \int_{{\mathbb{S}}^2\times {\mathbb{S}}^2}d(x,y)H_s(x,y)\pi_{s,s+\varepsilon }(dxdy)\nonumber\\
&\leq{\frac{1}{\vert \varepsilon \vert}} \Bigl(\int\!\!\! \int_{{\mathbb{S}}^2\times {\mathbb{S}}^2}d(x,y)^2\pi_{s,s+\varepsilon }(dxdy) \Bigr)^{1/2} \Bigl(\int\!\!\!  \int_{{\mathbb{S}}^2\times {\mathbb{S}}^2}H_s(x,y)^2\pi_{s,s+\varepsilon }(dxdy)\Bigr)^{1/2}\nonumber\\
&={\frac{W_2(p^{(h)}_{s+\varepsilon },p^{(h)}_s)}{\vert \varepsilon \vert}}\Bigl( \int_{{\mathbb{S}}^2\times {\mathbb{S}}^2}H_s(x,y)^2\pi_{s,s+\varepsilon }(dxdy)\Bigr)^{1/2};\end{align}
hence
\begin{align}\label{lipdual}\lim\sup_{\varepsilon \rightarrow 0}   {\frac{1}{\vert \varepsilon \vert}}\Bigl\vert &\int_{{\mathbb{S}}^2}\psi (x,s)(p^{(h)}_{s+\varepsilon }(x)-p^{(h)}_{s}(x))m(dx)\Bigr\vert\nonumber\\
&\leq\Bigl(\int_{{\mathbb{S}}^2} \Vert \vec v(x,s)\Vert^2p^{(h)}_s (x)\, m(dx)\Bigr)^{1/2} \Bigl(\int_{{\mathbb{S}}^2}\Vert\nabla\psi (x,s)\Vert^2p^{(h)}_s(x)m(dx)\Bigr)^{1/2}.\end{align}
This gives bounds of the form\par
\begin{align}\label{quadvar}\Bigl\vert\int_0^\tau \int_{{\mathbb{S}}^2}&{\frac{\partial \psi}{\partial t}} (x,t)p_t^{(h)}(x)m(dx)dt\Bigr\vert\nonumber\\
&\leq \Bigl( \int_0^\tau \int_{{\mathbb{S}}^2}\Vert \vec v(x,t)\Vert^2p^{(h)}_t (x) m(dx)dt\Bigr)^{1/2}\Bigl(\int_0^\tau \int_{{\mathbb{S}}^2} \bigl\Vert\nabla\psi (x,t)\bigr\Vert^2m(dx)dt\Bigr)^{1/2}.\end{align}
Here $W_1(p^{(h)}_u, p^{(h)}_v)\leq W_2(p^{(h)}_u,p^{(h)}_v)$, where $W_2(p_u,p_v)\rightarrow 0$ as $u\rightarrow v$, so\par
\noindent $t\mapsto 
\int_{{\mathbb{S}}^2}\psi (y)p^{(h)}_t (y)m(dy)$ is continuous.
Also,
$$h_\nu\sum_{j=1}^{\lfloor 1/h_\nu\rfloor} \int_{{\mathbb{S}}^2} \psi (X_{h_\nu j}(x),h_\nu j)\rho_0(x)m(dx)=h_\nu\sum_{j=1}^{\lfloor 1/h_\nu\rfloor} \int_{{\mathbb{S}}^2}\psi (y, jh_\nu ) \rho_{jh_\nu }(x)m(dx)$$
 converges to the same limit as (\ref{plimit}).\par
\indent As in (iii) of Corollary \ref{Corollary1}, we introduce an approximating family  $( \psi_\varepsilon \ast p^{(h)}_t)_{\varepsilon >0}$ for $p^{(h)}_t$, and define $p_t^{(h, \varepsilon)}=
 \psi_\varepsilon \ast p^{(h)}_t$. We show that there exists $\tau>0$ such that for $v\in T_x{\mathbb{S}}^2$, the initial value problem
\begin{align}\label{modifiedODE}{\frac{d}{dt}}\begin{bmatrix}X\\ V\end{bmatrix}=\begin{bmatrix}0&1\\ -1&0\end{bmatrix}\begin{bmatrix}X\\ V\end{bmatrix}+\begin{bmatrix}0\\ -\nabla(\Theta_1\circ p_t^{(h, \varepsilon)})(X) \end{bmatrix}, \quad  \begin{bmatrix}X(0;x,v)\\ V(0;x,v)\end{bmatrix}=\begin{bmatrix}x\\ v\end{bmatrix}  \end{align}
has a unique solution on $[0,\tau]$.  
 The right-hand side of (\ref{modifiedODE}) is a Lipschitz continuous function of $[X;V]$; indeed we have 
$$\nabla \bigl(\Theta_1\circ p_t^{(h, \varepsilon)}\bigr)(x)=(\Theta'_1 \circ p_t^{(h, \varepsilon)})(x)(\nabla \psi_\varepsilon\ast p^{(h)}_t)(x)$$
and the Lipschitz norm of this is bounded by $\Vert D^2 \bigl(\Theta_1\circ p_t^{(h, \varepsilon)}\bigr)(x)\Vert_{L^\infty_{xt}}$, where
\begin{align}D^2 \bigl(\Theta_1\circ (\psi_\varepsilon \ast p^{(h)}_t)\bigr)(x)&=\Theta'_1 \circ (\psi_\varepsilon \ast p^{(h)}_t)(x)(D^2\psi_\varepsilon\ast p^{(h)}_t)(x)\nonumber\\
&\quad +\Theta''_1 \circ (\psi_\varepsilon \ast p^{(h)}_t)(x)(\nabla \psi_\varepsilon\ast p^{(h)}_t(x)\otimes (\nabla \psi_\varepsilon\ast p^{(h)}_t)(x)\end{align}
\noindent  is bounded for $(x,t)\in {\mathbb{S}}^2\times [0, \tau ]$. By Cauchy-Lipschitz theory, there exists a unique solution to (\ref{modifiedODE}) on $[0, \tau ]$ for all $h,\varepsilon>0$ for all for each $x\in {\mathbb{S}}^2$ and $v\in T_x{\mathbb{S}}^2$, in the guise of the integral equation
\begin{align}\label{integralequation}{}&\begin{bmatrix} X^{(h,\varepsilon )}(t;x,v)\\ V^{(h,\varepsilon )}(t;x,v)\end{bmatrix} =\begin{bmatrix} \cos t&\sin t\\ -\sin t&\cos t\end{bmatrix}\begin{bmatrix} x\\ v\end{bmatrix}\nonumber\\
&+\int_0^t \begin{bmatrix} \cos(t-u)&\sin (t-u)\\ -\sin (t-u)&\cos (t-u)\end{bmatrix}\begin{bmatrix}0\\ -\nabla ( \Theta_1\circ p_u^{(h,\varepsilon )}) (X^{(h,\varepsilon )}(u;x,v))\end{bmatrix}du.\end{align}
\indent There is a natural map $T{\mathbb{S}}^2\rightarrow T{\mathbb{S}}^2$ given by $[x;v]\mapsto [X^{(h, \varepsilon)}(t;x,v); V^{(h, \varepsilon)}(t;x,v)]$, which is bijective for all $0\leq t\leq \tau$ and $\tau>0$ sufficiently small, and the inverse map is obtained by running the differential equation backwards in time. Indeed, from the integral equation (\ref{integralequation}), we have
$$\Bigl\Vert D\begin{bmatrix} X^{(h, \varepsilon)}(t)\\ V^{(h, \varepsilon)}(t)\end{bmatrix}\Bigr\Vert\leq 1+\int_0^t\bigl\Vert D^2(\Theta_1\circ p_u^{(h, \varepsilon)} ) \bigr\Vert_{L^\infty_x} \Bigl\Vert D\begin{bmatrix} X^{(h, \varepsilon)}(u)\\ V^{(h, \varepsilon)}(u)\end{bmatrix}\Bigr\Vert du,$$
so by Gronwall's inequality Theorem 12.3.3 of \cite{bib40}, we have
$$\Bigl\Vert D\begin{bmatrix} X^{(h, \varepsilon)}(t)\\ V^{(h, \varepsilon)}(t)\end{bmatrix}- \begin{bmatrix} \cos t&\sin t\\ -\sin t&\cos t\end{bmatrix} \otimes I_2 \Bigr\Vert\leq \exp\Bigl( \int_0^t \bigl\Vert D^2(\Theta_1\circ p_u^{(h, \varepsilon)} ) \bigr\Vert_{L^\infty_x} du\Bigr)-1.$$
\noindent Hence there exists $\tau>0$ such that  map $[x;v]\mapsto [X^{(h, \varepsilon)}(x,v;t);V^{(h, \varepsilon)}(x,v;t)]$ gives a bijection $T{\mathbb{S}}^2\rightarrow T{\mathbb{S}}^2$ for all $0\leq t\leq \tau$, such that $x\mapsto X^{(h, \varepsilon)}(x, \nabla q_0(x); t)$ is a bijection $ {\mathbb{S}}^2\rightarrow {\mathbb{S}}^2$.\par
\indent Consider the density $\rho^{(h, \varepsilon)} (x,t)$ that is  induced from $\rho_0$ by $x\mapsto X^{(h, \varepsilon )}(t; x, v)$ where $v=\nabla q_0(x)$. (Note the distinction between $\rho^{(h, \varepsilon )} (x,t)$ and $p^{(h, \varepsilon )}_t (x)$.) As in the area formula of page 138 of \cite{bib8}, we have 
$$\rho^{(h, \varepsilon)} (X^{(h, \varepsilon )}(x,t),t)\vert\det DX^{(h, \varepsilon )}(x,t)\vert=\rho_0(x)\sharp \{ z: X^{(h, \varepsilon )}(x,t)=X^{(h, \varepsilon )}(z,t)\},$$
 so $\rho^{(h, \varepsilon)} (y,t)=\sum_{x:X^{(h, \varepsilon )}(x,t)=y}\rho_0(x)/\vert\det DX^{(h, \varepsilon )}(x,t)\vert$
and we introduce the push forward of ${\frac{dX}{dt}}\rho_0(x)$ by 
$$\vec v^{(h, \varepsilon )}(y,t) \rho^{(h, \varepsilon)} (y,t)= \sum_{x: X^{(h, \varepsilon )}(x,t)=y} {\frac{\rho_0(x)}{\vert \det DX^{(h, \varepsilon)}(x,t)\vert}}{\frac{dX^{(h, \varepsilon )}}{dt}}(x,t)$$ 
so $\vec v^{(h, \varepsilon )}(y,t)$ is the average of the Lagrangian velocities of trajectories that pass through a specific point $y$ at the same time $t$; the sum is finite since $\rho^{(h, \varepsilon)}( y,t)$ is bounded. By construction $x\mapsto X(x,jh)$ is bijective, and by the preceding analysis of the ODE,  $x\mapsto X^{(h, \varepsilon )}(x,t)$ is bijective and $X^{(h, \varepsilon ), *}(X^{(h, \varepsilon )}(x,t),t)=x$ and $X^{(h, \varepsilon )}(X^{(h, \varepsilon ),*}(x,t),t)=x$, then  $\vec v(t,y)={\frac{\partial X^{(h, \varepsilon )}}{\partial t}}\circ X^{(h, \varepsilon ),*}(y,t)$ satisfies
\begin{align}\label{intweakcont}-\int_{{\mathbb{S}}^2} \psi (x,0)\rho_0(x)m(dx)&=\int_0^\tau \int_{{\mathbb{S}}^2} {\frac{\partial\psi (y,t)}{\partial t}}\rho^{(h, \varepsilon)}(y, t)m(dy)dt
\nonumber\\
&\quad +\int_0^\tau \int_{{\mathbb{S}}^2}\vec v^{(h, \varepsilon )}(y,t)\cdot \nabla\psi (y,t)\rho^{(h, \varepsilon)} (y,t)m(dy)dt;\end{align}
so at time $t$ we interpret $\vec v^{(h, \varepsilon)} (t,y)$ as the Eulerian velocity vector field at $y$, while $V^{(h, \varepsilon)} (x,t)$ is the Lagrangian velocity with label $x$. 
Hence we have the continuity equation
$${\frac{\partial \rho^{(h, \varepsilon)} (x,t)}{\partial t}}+\nabla\cdot \bigl(\vec v^{(h, \varepsilon )}(t,x)\rho^{(h, \varepsilon)} (x,t)\bigr)=0$$ 
in the weak sense, and we proceed to obtain a metric version. We have
$$\int_{{\mathbb{S}}^2}\Vert \vec v^{(h, \varepsilon )}(t,y)\Vert^2\rho^{(h, \varepsilon)} (y,t)m(dy)=\int_{{\mathbb{S}}^2}\Vert \vec v^{(h, \varepsilon )}(t,X^{(h, \varepsilon )}(x,t))\Vert^2\rho_0 (x)m(dx).$$
\indent Next we compare $\rho_{jh}$ from the approximating path with the density $\rho^{(h, \varepsilon)} (x,t)$ that is  induced by from $\rho_0$ by $x\mapsto X^{(h, \varepsilon )}(t; x, v)$ where $v=\nabla q_0(x)$; we write $ 
X^{(h, \varepsilon )}(t; x)=X^{(h, \varepsilon )}(t; x, v)$ for $v=\nabla q_0(x)$. Suppose that $h=\tau /N$ for some integer $N$, and let  $t_0=0$ and $t_{j+1}=t_j+h$. 
Also for all $\psi\in C({\mathbb{S}}^2; {\mathbb R})$ and $t_j=jh$, we take 
\begin{align}\bigl\vert \int_{{\mathbb{S}}^2}&\psi (y)\rho^{(h, \varepsilon)} (y,t_j)m(dy)- \int_{{\mathbb{S}}^2}\psi (y)\rho_{jh} (y)m(dy)\Bigr\vert\nonumber\\
 &=\Bigl\vert \int_{{\mathbb{S}}^2}\bigl( \psi (X^{(h, \varepsilon )}(x,t_j))-\psi (X_{jh}(x))\bigr)\rho_{0}(x)m(dx)\Bigr\vert\nonumber\\
&\leq \Vert\nabla\psi\Vert_{L^\infty} \int_{{\mathbb{S}}^2}\Vert X^{(h, \varepsilon )}(x,t_j)-X_{jh}(x)\Vert\rho_{0}(x)m(dx), \end{align}
so
$$W_1(\rho^{(h, \varepsilon)} (\cdot , t_j), \rho_{jh})\leq  \int_{{\mathbb{S}}^2}\Vert X^{(h, \varepsilon )}(x,t_j)-X_{jh}(x)\Vert\rho_{0}(x)m(dx).$$
The measures $\rho^{(h, \varepsilon )}(x,t_j)$ satisfy a weaker variant of (\ref{Wsum}), so $t\mapsto \rho^{(h, \varepsilon )}(\cdot ,t)$ is $2$-absolutely continuous $[0, \tau ]\rightarrow {\mathcal W}^1( {\mathbb{S}}^2).$ From the integral equation (\ref{integralequation}), we have 
\begin{align}\sum_{j=1}^N {\frac{\Bigl\Vert \begin{bmatrix} X^{(h, \varepsilon )}(x, t_{j+1})-X^{(h, \varepsilon )}(x,t_j)\\ V^{(h, \varepsilon )}(x,t_{j+1})-V^{(h, \varepsilon )}(x,t_j)\end{bmatrix}\Bigr\Vert^2}{t_{j+1}-t_j}}&\leq 2\sum_{j=1}^N (t_{j+1}-t_j) \Bigl\Vert \begin{bmatrix} X^{(h, \varepsilon )}(x,t_j)\\ V^{(h, \varepsilon )}(x,t_j)\end{bmatrix}\Bigr\Vert^2\nonumber\\
&+2\int_0^\tau \Vert\nabla (\Theta_1\circ p^{(h, \varepsilon )}_u )(X^{(h, \varepsilon )}(x,u))\Vert^2 du\end{align}
 so  integrating this against $\rho_0(x)m(dx)$, we obtain
\begin{align}\sum_{j=1}^N {\frac{W_1(\rho^{(h,\varepsilon )}(\cdot , t_{j+1}), \rho^{(h,\varepsilon )}(\cdot , t_{j}))^2}{t_{j+1}-t_j}}&\leq 2\sum_{j=1}^N (t_{j+1}-t_j) \int_{{\mathbb{S}}^2}\Bigl\Vert \begin{bmatrix} X^{(h, \varepsilon )}(x,t_j)\\ V^{(h, \varepsilon )}(x,t_j)\end{bmatrix}\Bigr\Vert^2\rho_0(x)m(dx)\nonumber\\
&+2\int_0^\tau\int_{{\mathbb{S}}^2}\Vert\nabla(\Theta_1\circ p^{(h, \varepsilon )}_u )(x)\Vert^2\rho^{(h, \varepsilon )}(x,u)m(dx)du\end{align}
\indent The final step is to let $h,\varepsilon\rightarrow 0+$. The family of functions $[X^{(h, \varepsilon )}; V^{(h, \varepsilon )}]$ is uniformly equicontinuous, so by Arzela--Ascoli's theorem, there exists a sequence $h_\nu\rightarrow 0$ and $\varepsilon_\nu\rightarrow 0$ such that   $[X^{(h_\nu, \varepsilon_\nu )};V^{(h_\nu, \varepsilon_\nu )}]\rightarrow [X;V]$ uniformly on ${\mathbb {S}}^2\times [0, \tau ]$. Let $\rho (x,t)$ be the probability density that is induced from $\rho_0$ by $x\mapsto X(x,t)$. 
We have 
\begin{align} \Bigl\vert\int_{{\mathbb{S}}^2}&\psi (X(x,t))\rho_0(x)m(dx)-\int_{{\mathbb{S}}^2}\psi (X^{(h_\nu, \varepsilon_\nu )}(x,t)\rho_0(x)m(dx)\Bigr\vert\nonumber\\
&\leq \Vert\nabla\psi\Vert_{L^\infty} \int_{{\mathbb{S}}^2}\Vert X^{(h_\nu, \varepsilon_\nu)}(x,t)-X(x,t)\Vert\rho_{0}(x)m(dx), \end{align}
so
$$W_1(\rho (\cdot ,t),\rho^{(h_\nu, \varepsilon_\nu)}(\cdot , t))\leq \int_{{\mathbb{S}}^2}\Vert X^{(h_\nu, \varepsilon_\nu)}(x,t)-X(x,t)\Vert\rho_{0}(x)m(dx)$$
where the right-hand side goes to $0$ as $(h_\nu, \varepsilon_\nu)\rightarrow (0,0).$\par
We can take the limit of (\ref{intweakcont}), and deduce that the weak continuity equation holds with velocity $\vec v(x,t)$ and density $\rho (x,t)$. We have
\begin{align}\int_0^\tau  \int_{{\mathbb{S}}^2}\psi (x,t) \rho^{(h_\nu ,\varepsilon_\nu )}(x,t)m(dx)dt&=\int_0^\tau  \int_{{\mathbb{S}}^2}\psi (X^{(h_\nu ,\varepsilon_\nu )}(x,t),t)\rho_0(x)m(dx)dt\nonumber\\
&\rightarrow \int_0^\tau  \int_{{\mathbb{S}}^2}\psi (x,t) \rho (x,t)m(dx)dt\end{align}
\noindent for all $\psi\in  C({\mathbb{S}}^2\times [0, \tau ];{\mathbb{R}})$, so $p(x,t)=\rho (x,t)$ almost everywhere, at least for the sequence $h_\nu\rightarrow 0$. Finally, one can easily check that $\delta\leq \rho (x,t)\leq 1/\delta$ for almost all $(x,t)\in {\mathbb{S}}^2\times [0, \tau ]$.
\end{proof} 
\subsection{The acceleration equation}
The function $T{\mathbb{S}}^2\rightarrow {\mathbb{R}}^4:$ $[X;V]\mapsto [V; -X-\nabla (\Theta_1\circ \rho_u)]$ on the right-hand side of (\ref{ODE}) has derivative
$$\begin{bmatrix} 0& -I-D^2(\Theta_1\circ \rho_u )\\ I&0\end{bmatrix},$$
where $D^2(\Theta_1\circ \rho )$ is a matrix of measures by (\ref{lowerboundonHessian}), and the diagonal terms are evidently zero, so the function is of bounded variation. In the following result, we hypothesize that the family of measures $D^2(\Theta_1\circ\rho^{(h, \varepsilon )}_t)$ is $L^2$. As in Ambrosio's stability Theorem 6.3 \cite{bib37}  for Lagrangian flows, we impose additional assumptions on the vector fields in order to control the flow on the measures that the differential equation generates. \par 

\begin{prop}\label{proposition12} Suppose that there exist $\varepsilon_0,h_0>0$ such that the solution of (\ref{modifiedODE}) gives a differentiabe function $\vec v^{(h, \varepsilon )}: {\mathbb{S}}\times [0, \tau ]\rightarrow {\mathbb R}^2$ such that 
$$\int_0^\tau \int_{{\mathbb{S}}^2}\bigl\Vert D_y\bigl(\vec v^{(h,\varepsilon)}(y,t)\bigr)\bigr\Vert^2\rho^{(h, \varepsilon)}(y,t)m(dy)dt$$
are uniformly bounded for $h_0>h>0$ and $\varepsilon_0>\varepsilon>0$. \par
\indent (i) Then there exists a weak solution to the Euler equations.\par
\indent (ii) Suppose moreover that $\Vert D^2(\Theta_1\circ p_u^{(h,\varepsilon)})(x)\Vert\leq K$ for all $x\in {\mathbb{S}}^2$ and $h_0>h>0$ and $\varepsilon_0>\varepsilon>0$. Then as $(h_k, \varepsilon_k)\rightarrow (0,0)$, the approximate solution $p_t^{(h_k, \varepsilon_k)}$ converges to the weak solution $\rho (x,t)$ in ${\mathcal{W}}^2( {\mathbb{S}}^2).$\end{prop}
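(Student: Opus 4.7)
The plan is to pass to the limit in the integral form of (\ref{modifiedODE}) along the subsequence $(h_\nu,\varepsilon_\nu)\to 0$ produced by Proposition \ref{proposition11}, using the quantitative hypothesis on $D_y\vec v^{(h,\varepsilon)}$ to secure enough compactness to identify the Eulerian nonlinearity $\vec v\cdot\nabla\vec v$. Proposition \ref{proposition11} already supplies $[X^{(h_\nu,\varepsilon_\nu)};V^{(h_\nu,\varepsilon_\nu)}]\to [X;V]$ uniformly on $\mathbb{S}^2\times[0,\tau]$ together with the weak continuity equation for the limit density $\rho$, so the outstanding tasks are to verify (\ref{acceleration}) in the weak sense and then, under the additional hypothesis in (ii), to upgrade the convergence of $p^{(h,\varepsilon)}_t$ to $\mathcal{W}^2$-convergence.

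Combining the uniform lower bound $\rho^{(h,\varepsilon)}\ge\delta$ from Theorem \ref{Theorem1}(i) with the hypothesis gives a uniform $L^2([0,\tau]\times\mathbb{S}^2)$ bound on $D_y\vec v^{(h,\varepsilon)}$. Corollary \ref{Corollary2} bounds the kinetic energy $\int_{\mathbb{S}^2}\|\vec v^{(h,\varepsilon)}\|^2\rho^{(h,\varepsilon)}\,m(dy)$ uniformly in time, so together these yield a uniform bound in $L^2([0,\tau];H^1(\mathbb{S}^2;T\mathbb{S}^2))$. I would extract a further subsequence along which $\vec v^{(h_k,\varepsilon_k)}\to\vec v$ weakly in this space, while the $\Phi$-entropy bound of Proposition \ref{proposition8} forces $\nabla(\Theta_1\circ p^{(h_k,\varepsilon_k)}_t)\to\nabla(\Theta_1\circ\rho_t)$ weakly in $L^2([0,\tau]\times\mathbb{S}^2)$. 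To handle the quadratic term $\vec v\otimes\vec v$, I would promote weak to strong $L^2$ convergence by an Aubin-Lions argument: the $H^1$-in-space bound gives spatial Rellich-Kondrachov compactness, and (\ref{lipdual}) supplies the equicontinuity in time in a dual-Lipschitz norm. Testing the integral form of (\ref{modifiedODE}) against a smooth test field $\vec\phi(x,t)$, integrating by parts in space and time, and using the uniform convergence of $X^{(h_k,\varepsilon_k)}$ to identify the pressure term $\nabla(\Theta_1\circ p^{(h_k,\varepsilon_k)}_u)\circ X^{(h_k,\varepsilon_k)}$ then delivers the weak acceleration equation against the measure $\rho\,dm\,dt$.

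For part (ii), the Hessian bound $\|D^2(\Theta_1\circ p^{(h,\varepsilon)}_u)\|_{L^\infty}\le K$ renders the right-hand side of (\ref{modifiedODE}) spatially Lipschitz with constant independent of $(h,\varepsilon)$. Gronwall's inequality then upgrades the uniform convergence of Proposition \ref{proposition11}(ii) to an explicit rate and supplies a uniform spatial Lipschitz constant for $X^{(h_k,\varepsilon_k)}(\cdot, t)$. The push-forward identity then yields
\begin{align*}
W_2^2\bigl(\rho^{(h_k,\varepsilon_k)}(\cdot,t),\rho(\cdot,t)\bigr)\le \int_{\mathbb{S}^2} d\bigl(X^{(h_k,\varepsilon_k)}(x,t),X(x,t)\bigr)^2\rho_0(x)\,m(dx)\to 0,
\end{align*}
and the approximate-identity estimate from Corollary \ref{Corollary1}(iii) gives $W_2(p^{(h_k,\varepsilon_k)}_t,\rho^{(h_k,\varepsilon_k)}(\cdot,t))\to 0$, so altogether $p^{(h_k,\varepsilon_k)}_t\to\rho_t$ in $\mathcal{W}^2(\mathbb{S}^2)$.

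The principal obstacle will be the Aubin-Lions step for $\vec v^{(h,\varepsilon)}$: to pass from $L^2_tH^1_x$-weak compactness to $L^2_{t,x}$-strong compactness I need a uniform-in-$(h,\varepsilon)$ estimate of $\partial_t(\rho^{(h,\varepsilon)}\vec v^{(h,\varepsilon)})$ in a negative Sobolev norm, and the Eulerian velocity is defined only implicitly through the Lagrangian flow. My proposed route is to exploit the Lagrangian acceleration $\dot V^{(h,\varepsilon)}=-X^{(h,\varepsilon)}-\nabla(\Theta_1\circ p^{(h,\varepsilon)})\circ X^{(h,\varepsilon)}$ in conjunction with the continuity equation to express $\partial_t(\rho^{(h,\varepsilon)}\vec v^{(h,\varepsilon)})$ as the sum of the pressure gradient, the centripetal term $-\rho^{(h,\varepsilon)}X$, and a divergence in space of $\rho^{(h,\varepsilon)}\vec v^{(h,\varepsilon)}\otimes\vec v^{(h,\varepsilon)}$, all of which are controlled by the hypotheses. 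Under the stronger assumption of (ii) this difficulty evaporates, since one has pointwise Lipschitz control of the driving field and can simply invoke Cauchy-Lipschitz stability of ODE flows.
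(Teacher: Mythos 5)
Your proposal for part (i) takes a genuinely different route from the paper's and, to its credit, addresses a step the paper glosses over. The paper rewrites the Lagrangian weak acceleration equation (\ref{weakacceleration}) in Eulerian form, integrates by parts via the continuity equation as in (\ref{split}), and bounds the resulting terms by Cauchy--Schwarz using the hypothesis on $D_y\vec v^{(h,\varepsilon)}$, the kinetic-energy bound from Corollary~\ref{Corollary2}, and the Fisher-information bound; it then concludes by asserting that both sides of the weak equation define uniformly bounded linear functionals on $C^1({\mathbb{S}}^2\times[0,\tau];{\mathbb{R}})$ and so admit a limit along a subsequence. That leaves unexamined the point you rightly flag: uniform boundedness of the quadratic expression $\int_0^\tau\!\int_{{\mathbb{S}}^2}\nabla(\phi\cdot\vec v^{(h,\varepsilon)})\cdot\vec v^{(h,\varepsilon)}\rho^{(h,\varepsilon)}\,m(dy)\,dt$ does not by itself identify its weak limit as the same expression evaluated at $(\vec v,\rho)$, since weak limits of products can pick up a defect. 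Your Aubin--Lions plan (Rellich compactness in space from the uniform $L^2_tH^1_x$ bound supplied by the hypothesis together with the uniform bounds $\delta\le\rho^{(h,\varepsilon)}\le 1/\delta$, temporal equicontinuity via (\ref{lipdual}), and a negative-Sobolev bound on $\partial_t(\rho^{(h,\varepsilon)}\vec v^{(h,\varepsilon)})$ read off the momentum balance) is the standard way to obtain the strong $L^2_{t,x}$ convergence of $\vec v^{(h,\varepsilon)}$ that the paper implicitly needs but does not establish. For part (ii) your approach and the paper's are essentially the same in spirit, both resting on the uniform Hessian bound and Gronwall; the paper introduces the quantities $\chi$ and $\eta$ and shows a Cauchy property in $W_1$ by comparing two approximate flows $X^{(h_1,\varepsilon_1)}$ and $X^{(h_2,\varepsilon_2)}$, while you compare each $X^{(h_k,\varepsilon_k)}$ directly to the limit $X$ of Proposition~\ref{proposition11}(ii) and obtain the $W_2$ estimate by squaring, also invoking Corollary~\ref{Corollary1}(iii) to transfer from the flow-induced densities $\rho^{(h,\varepsilon)}(\cdot,t)$ to the smoothed approximants $p_t^{(h,\varepsilon)}$ --- a step the paper leaves implicit although it is what the proposition literally asserts.

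The one genuine gap is the Aubin--Lions step, which you yourself name as the principal obstacle and only sketch. To close it you would need to verify that $\rho^{(h,\varepsilon)}\vec v^{(h,\varepsilon)}$ (not merely $\vec v^{(h,\varepsilon)}$) lies in $L^2_tH^1_x$ uniformly in $(h,\varepsilon)$, which requires control of $\nabla\rho^{(h,\varepsilon)}$; this is available because $p_t^{(h,\varepsilon)}=\psi_\varepsilon\ast p_t^{(h)}$ is smooth and the densities are uniformly bounded above and below, but that regularity is uniform in $h$ only for fixed $\varepsilon>0$, so some care with the order of limits is needed. You would also need to make precise that the momentum balance used to control $\partial_t(\rho^{(h,\varepsilon)}\vec v^{(h,\varepsilon)})$ holds in a sufficiently strong sense for the approximate solutions: (\ref{modifiedODE}) gives the Lagrangian acceleration, and casting it in Eulerian variables requires exactly the manipulations of (\ref{split}), so your route supplements rather than replaces the paper's integration-by-parts identities --- it adds the compactness they lack.
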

\begin{proof} The acceleration equation leads to 
\begin{align}\label{weakacceleration}\int_0^\tau\int_{{\mathbb{S}}^2}&\phi (X^{(h,\varepsilon)}(x,t),t)\cdot {\frac{\partial V^{(h,\varepsilon)}(x,t)}{\partial t}}\rho_0(x)m(dx)dt\nonumber\\
&= \int_0^\tau\int_{{\mathbb{S}}^2}\Bigl(-\phi (X^{(h,\varepsilon)}(x,t),t)\cdot
X^{(h,\varepsilon )}(x,t)\nonumber\\
&\quad -\phi (X^{(h,\varepsilon)}(x,t),t) \cdot\nabla (\Theta_1\circ p_t^{(
h, \varepsilon)})(X^{(h,\varepsilon)}(x,t)\Bigr)\rho_0(x)m(dx)dt;\end{align} 
for all  $\phi\in  C^1({\mathbb{S}}^2\times [0, \tau ];{\mathbb{R}})$, which gives 
\begin{align}\label{weakacceleration}\int_0^\tau\int_{{\mathbb{S}}^2}&\phi (y,t)\cdot {\frac{\partial \vec v^{(h,\varepsilon)}(y,t)}{\partial t}}\rho ^{(h, \varepsilon)}(y,t)m(dy)dt\nonumber\\
&= \int_0^\tau\int_{{\mathbb{S}}^2}\Bigl(-\phi (y,t)\cdot y-\phi (y,t) \cdot\nabla (\Theta_1\circ p_t^{(h, \varepsilon)})(y)\Bigr)\rho^{(h, \varepsilon)}(y,t)m(dy)dt;\end{align} 
\noindent which is bounded above by
\begin{align} {}&\int_0^\tau\int_{{\mathbb{S}}^2}\Vert\phi (y,t)\Vert\rho ^{(h, \varepsilon)}(y,t)m(dy)dt+\Bigl( \int_0^\tau\int_{{\mathbb{S}}^2}\Vert\phi (y,t)\Vert^2\rho^{(h, \varepsilon)}(y,t)m(dy)dt\Bigr)^{1/2}\nonumber\\
&\times \Bigl( \int_0^\tau\int_{{\mathbb{S}}^2}\Vert\nabla (\Theta_1\circ p_t^{(h, \varepsilon)}) (y)\Vert^2\rho ^{(h,\varepsilon)}(y,t)m(dy)dt\Bigr)^{1/2}.\end{align}

\indent  Now the left-hand side of (\ref{weakacceleration}) leads to
\begin{align}\label{split}\int_0^\tau \int_{{\mathbb{S}}^2}&\phi (y,t)\cdot {\frac{\partial \vec v^{(h,\varepsilon)}(y,t)}{\partial t}}\rho^{(h, \varepsilon)}(y,t)m(dy)dt\nonumber\\
&=-\int_0^\tau \int_{{\mathbb{S}}^2}
{\frac{\partial \phi (y,t)}{\partial t}}\cdot  \vec v^{(h,\varepsilon)}(y,t)\rho^{(h, \varepsilon)}(y,t)m(dy)dt\nonumber\\
&\quad-\int_0^\tau \int_{{\mathbb{S}}^2}\phi (y,t)\cdot  \vec v^{(h,\varepsilon)}(y,t){\frac{\partial\rho^{(h, \varepsilon)}(y,t)}{\partial t}}m(dy)dt,\end{align}
in which the first integral in (\ref{split}) is bounded in modulus by 
\begin{align}\leq \Bigl(\int_0^\tau &\int_{{\mathbb{S}}^2}\Bigl\Vert {\frac{\partial \phi (y,t)}{\partial t}}\Bigr\Vert^2\rho^{(h, \varepsilon)}(y,t)m(dy)dt\Bigr)^{1/2}\Bigl(\int_0^\tau \int_{{\mathbb{S}}^2}\Vert \vec v^{(h,\varepsilon )}(y,t)\Vert^2\rho^{(h, \varepsilon)}(y,t)m(dy)dt\Bigr)^{1/2};\end{align}
while we use the continuity equation to replace the second integral in (\ref{split}) by
\begin{align}\int_0^\tau \int_{{\mathbb{S}}^2}&\phi (y,t)\cdot  \vec v^{(h,\varepsilon)}(y,t)\nabla \cdot\bigl( \vec v^{(h,\varepsilon)}(y,t)\rho^{(h, \varepsilon)}(y,t)\bigr)m(dy)dt\nonumber\\
&=-\int_0^\tau \int_{{\mathbb{S}}^2}\nabla\bigl(\phi (y,t)\cdot  \vec v^{(h,\varepsilon)}(y,t)\bigr)\cdot\bigl( \vec v^{(h,\varepsilon)}(y,t)\rho^{(h, \varepsilon)}(y,t)\bigr)m(dy)dt\nonumber\\
&\leq\Bigl( \int_0^\tau \int_{{\mathbb{S}}^2}\bigl\Vert\nabla\bigl(\phi (y,t)\cdot  \vec v^{(h,\varepsilon)}(y,t)\bigr)\bigr\Vert^2\rho^{(h, \varepsilon)}(y,t)m(dy)dt\Bigr)^{1/2}\nonumber\\
&\quad\times \Bigl( \int_0^\tau \int_{{\mathbb{S}}^2}\bigl\Vert \vec v^{(h,\varepsilon)}(y,t)\bigr\Vert^2\rho^{(h, \varepsilon)}(y,t)m(dy)dt\Bigr)^{1/2},\end{align} 
\noindent where the last step is in accord with (\ref{quadvar}). Hence both sides of (\ref{weakacceleration}) define bounded linear functionals on $C^1({\mathbb{S}}^2\times [0, \tau ];{\mathbb{R}})$, and we obtain a weak solution in the limit as $(h_\nu , \varepsilon_\nu )\rightarrow (0,0)$ through some subsequence.\par
\indent (ii) We introduce
$$\chi (x,t)=\Bigl\Vert \begin{bmatrix} X^{(h_1, \varepsilon_1)}(x,v,t)-X^{(h_2, \varepsilon_2)}(x,v,t)\\  V^{(h_1, \varepsilon_1)}(x,v,t) -V^{(h_2, \varepsilon_2)}(x,v,t)\end{bmatrix}\Bigr\Vert$$
and 
\begin{align}\label{eta}\eta (x,t)=\int_0^t \bigl\Vert \nabla (\Theta_1\circ p_u^{(h_1, \varepsilon_1 )})(X^{(h_1,\varepsilon_1)}(x,v,u)- \nabla (\Theta_1\circ p_u^{(h_2, \varepsilon_2 )})(X^{(h_1,\varepsilon_1)}(x,v,u)\bigr\Vert du,\end{align}
so that by (\ref{integralequation}), we have
$$\chi (x,t)\leq \eta (x,t)+\int_0^t \bigl\Vert D^2(\Theta_1\circ p_u^{(h_1, \varepsilon_1)} )(y)\bigr\Vert_{L^\infty_y}\chi (x,u)du.$$
Then by Gronwall's inequality Theorem 12.3.3 of \cite{bib40}, we deduce
\begin{align}\label{Gron}\chi &(x,t)\leq \eta(x,t)\nonumber\\
&+\int_0^t  \bigl\Vert D^2(\Theta_1\circ p_s^{(h_1, \varepsilon_1)} )(y) \bigr\Vert_{L^\infty_y}\exp\Bigl( \int_s^t\bigl\Vert D^2(\Theta_1\circ p_u^{(h_1, \varepsilon_1)} )(y) \bigr\Vert_{L^\infty_y}du\Bigr) \eta (s,x)ds,\end{align}
where
\begin{align}\label{intchi}W_1\bigl(\rho^{(h_1, \varepsilon_1)}(\cdot , t), \rho^{(h_2, \varepsilon_2)}(\cdot , t)\bigr)\leq \int_{{\mathbb{S}}^2}\chi (x,t)\rho_0(x)m(dx)\end{align}
and 
\begin{align}\label{inteta}&\int_{{\mathbb{S}}^2}\eta (x,t)\rho_0(x)m(dx)\nonumber\\
&=\int_0^t \int_{{\mathbb{S}}^2}\bigl\Vert \nabla (\Theta_1\circ p_u^{(h_1, \varepsilon_1 )})(y)- \nabla (\Theta_1\circ p_u^{(h_2, \varepsilon_2 )})(y)\bigr\Vert\rho^{(h_1, \varepsilon_1)}(y,u)m(dy)du.\end{align} 
\noindent For $\varepsilon >0$, we deduce that $W_1((\rho^{(h_1, \varepsilon )}(\cdot , t), \rho^{(h_2, \varepsilon )}(\cdot , t))\rightarrow 0$ as $h_1,h_2\rightarrow 0$.\par
\indent We have $\nabla p_{t}^{(h, \varepsilon)}=
 \nabla \psi_\varepsilon \ast p^{(h)}_{t}\rightarrow \nabla \rho_t$ in $L^2$ where $t=hj$ as $h, \varepsilon\rightarrow 0$, so $\nabla ( \Theta_1\circ p_{t}^{(h_1, \varepsilon_1)}(y)-  \Theta_1\circ p_{t}^{(h_2, \varepsilon_2)}(y))\rightarrow 0$ in $L^2$ as $(h_1, \varepsilon_1), (h_2, \varepsilon_2)\rightarrow (0,0)$. We deduce that (\ref{inteta}) converges to $0$, hence by (\ref{Gron}) and (\ref{intchi}), $\rho^{(h_1, \varepsilon_1)}(\cdot , t)$ converges to $\rho (\cdot ,t)$.
\end{proof} 
\begin{rem}\label{remark4} (i) In their solution of the compressible semigeostrophic equations in dual space, Cullen, Gilbert and Pelloni \cite{bib36} (6.2) use a continuity equation in which the velocity vector field is divergence free, and they can therefore exploit directly the results of Ambrosio \cite{bib37} on vector fields of bounded variation. In Propositions \ref{proposition11} and \ref{proposition12}, we have a vector field on $T{\mathbb{S}}^2$ which is given by velocity and acceleration on ${\mathbb{S}}$, and have the additional complication that the density appears in both differential equations.\par
\indent (ii) The estimates of Corollary \ref{Corollary2} seem too weak to force convergence of the $\rho_{hj}$ in $L^p$ spaces; see page 1067 of \cite{bib17}.\end{rem}

\end{document}